\documentclass[12pt]{amsart}
\usepackage{amsmath,amsfonts,amsthm,amssymb,nicefrac}
\usepackage{graphicx,color,url}
\usepackage{xcolor}
\usepackage{accents}
\usepackage{enumerate}
\usepackage{comment}
\usepackage{hyperref,tikz}
\usepackage{wrapfig}
\usepackage{mdframed}
\newtheorem{theorem}{Theorem}
\newtheorem{lemma}[theorem]{Lemma}

\newtheorem{proposition}[theorem]{Proposition}
\theoremstyle{definition}
\newtheorem{remark}[theorem]{Remark}

\newtheorem*{claim}{Claim}
\newtheorem{conjecture}[theorem]{Conjecture}
\numberwithin{equation}{section}\numberwithin{theorem}{section}
\allowdisplaybreaks
\newcounter{stepctr}
{\end{list}}

\def\XXint#1#2#3{{\setbox0=\hbox{$#1{#2#3}{\int}$}
 \vcenter{\hbox{$#2#3$}}\kern-.5\wd0}}

\renewcommand{\Pi}{\mathcal{A}}

\DeclareMathOperator{\spt}{spt}

\newcommand{\e}{\varepsilon}

\renewcommand{\vee}{\mathbf{V}}

\makeatletter
\DeclareFontFamily{OMX}{MnSymbolE}{}
\DeclareSymbolFont{MnLargeSymbols}{OMX}{MnSymbolE}{m}{n}
\SetSymbolFont{MnLargeSymbols}{bold}{OMX}{MnSymbolE}{b}{n}
\DeclareFontShape{OMX}{MnSymbolE}{m}{n}{
 <-6> MnSymbolE5
 <6-7> MnSymbolE6
 <7-8> MnSymbolE7
 <8-9> MnSymbolE8
 <9-10> MnSymbolE9
 <10-12> MnSymbolE10
 <12-> MnSymbolE12
}{}
\DeclareFontShape{OMX}{MnSymbolE}{b}{n}{
 <-6> MnSymbolE-Bold5
 <6-7> MnSymbolE-Bold6
 <7-8> MnSymbolE-Bold7
 <8-9> MnSymbolE-Bold8
 <9-10> MnSymbolE-Bold9
 <10-12> MnSymbolE-Bold10
 <12-> MnSymbolE-Bold12
}{}
\let\llangle\@undefined
\let\rrangle\@undefined
\DeclareMathDelimiter{\llangle}{\mathopen}%
 {MnLargeSymbols}{'164}{MnLargeSymbols}{'164}
\DeclareMathDelimiter{\rrangle}{\mathclose}%
 {MnLargeSymbols}{'171}{MnLargeSymbols}{'171}
\makeatother
\author{Huy The Nguyen}
\address{School of Mathematical Sciences\\
	Queen Mary University of London\\
	Mile End Road\\
	London E1 4NS}
\email{h.nguyen@qmul.ac.uk}
\author{Shengwen Wang}
\address{School of Mathematical Sciences\\
	Queen Mary University of London\\
	Mile End Road\\
	London E1 4NS}
\email{shengwen.wang@qmul.ac.uk}
\begin{document}
\title[]{Allard Regularity for Abelian Yang--Mills--Higgs Equation}
\begin{abstract}
We study solutions to the self-dual Abelian Yang--Mills--Higgs (YMH) equations in the singular limit $\e \to 0 $, where the associated self-dual Ginzburg--Landau type energy
	\begin{align*}
	E_\e\begin{pmatrix}u\\ A\end{pmatrix} = \int_M \left( |\nabla^A u|^2 + \e^2 |F_A|^2 + \frac{(1 - |u|^2)^2}{4\e^2} \right) \mathrm{dvol}_g
	\end{align*}
exhibits concentration along codimension-two sets. Using techniques inspired by Allard's regularity theory, we construct approximate solutions concentrating near a minimal submanifold and analyse their perturbations via a linearised operator projected orthogonally to gauge and translational zero modes. By working in Fermi coordinates and enforcing Coulomb gauge conditions, we derive uniform Lipschitz and curvature estimates for the solutions and obtain H\"older regularity for the scalar and connection components. These results establish a geometric framework for understanding vortex sheet formation and provide a regularity theory for the limiting defect set in the context of Abelian gauge theories.
\end{abstract}
\maketitle
\section{Introduction}

We investigate the regularity and structure of solutions to the self-dual Abelian Yang--Mills--Higgs (YMH) equations on Riemannian manifolds, with particular emphasis on configurations concentrating near codimension two minimal submanifolds. These equations arise as the Euler--Lagrange system for the Ginzburg--Landau type energy functional
	\begin{align}\label{YMH_Energy}
	E_\e\begin{pmatrix}u\\ A\end{pmatrix} = \int_M \left( |\nabla^A u|^2 + \e^2 |F_A|^2 + \frac{(1 - |u|^2)^2}{4\e^2} \right) \mathrm{dvol}_g,
	\end{align}
where $u $ is a section of a Hermitian line bundle, $A $ is a $U(1) $ connection 1-form and $F_A$ is the connection 2-form of $A$. In the singular limit $\e \to 0 $, critical points of $E_\e$ are expected to concentrate along a vortex set, and the analysis of their regularity becomes analogous to questions in geometric measure theory.

The theory for $U(1)$ Yang--Mills--Higgs equations and relation to minimal submanifolds of codimension two have been initiated by Pigati--Stern \cite{Pigati2019}. They proved that critical points of the $E_\e$ functional converge to stationary varifolds of codimension two in a closed Riemannian manifold as $\e\rightarrow0$. Later Parise--Pigati--Stern \cite{Parise2021} proved the $\Gamma$ convergence of $E_\e$ to the codimension two area functional. as $\e\rightarrow0$. When the gauge group is the non-Abelian $SU(2)$, the $\Gamma$ convergence of the corresponding functional to the codimension three area functional was also developed recently by Parise--Pigati--Stern \cite{Parise2025}. For the reverse question, Badran--del Pino \cite{Badran2023,Badran2024} showed the existence of solutions to the Abelian Yang--Mills-Higgs equation converging to admissible, non-degenerate minimal submanifolds of codimension two and developed the ansatz for linear analysis in this gauged setting (see also \cite{DePhilippis2022} for a variational proof of the existence of solutions near non-degenerate minimal submanifolds).

This model can be viewed as a codimension two analogue of the Allen–Cahn phase transition model with energy
\[
E^{AC}_\e(u)= \int_M \left( |\nabla u|^2 + \frac{(1 - |u|^2)^2}{\e^2} \right) \mathrm{dvol}_g.
\]
As $\e \to 0$, the connection between Allen–Cahn critical points and minimal hypersurfaces (codimension one stationary varifolds) has been extensively studied. The $\Gamma$-convergence of $E^{AC}_\e$ to the area functional was established by Modica \cite{Modica1987}, and convergence of critical points to stationary varifolds was proved by Hutchinson–Tonegawa \cite{Hutchinson2000}. Moreover, Pacard–Ritoré \cite{Pacard2003} constructed solutions concentrating near non-degenerate minimal hypersurfaces. This correspondence has led to significant geometric applications, including min–max constructions of minimal hypersurfaces \cite{Guaraco2018} and results on the abundance of minimal surfaces \cite{Chodosh2018}. While the convergence is initially weak, refined regularity theory developed by Wang–Wei \cite{Wang2019a, Wang2019} shows that, under suitable assumptions, nodal sets converge in $C^{2,\alpha}$ and satisfy curvature estimates.  In this paper, we will develop the analogous improved regularity theory of \cite{Wang2019a, Wang2019} for the Abelian YMH model .

Our first result is an improvement of regularity for the nodal sets of the Higgs field that are converging to a multiplicity one submanifold of codimension two.
\begin{theorem}\label{main}
For any $\alpha\in[0,1)$, and $L_0>0$, there exists $\e_0, C_0>0$ such that the following holds. Suppose that $(u_\e,A_\e)^T$ is a sequence of critical points for $E_\e$  (see \eqref{YMH_Energy})  defined on $B_2^{n+2}(0)\subset\mathbb R^{n+2}$  with $\e\leq\e_0$, and that the nodal sets $M_{\e}:=\{u_\e=0\}$ are converging graphically to a minimal submanifold $M$ in $B^{n+2}_2(0)$ of codimension 2 as $\e\rightarrow0$, with Lipschitz norm bounded by $L_0$. Then we have that the nodal sets $M_{\e}$ are graphs with $C^{2,\alpha}$ norm uniformly bounded by $C_0$ and converging to $M$ in $B^{n+2}_1(0)$.
\end{theorem}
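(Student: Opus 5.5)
We adapt the Wang--Wei scheme for Allen--Cahn to the gauged setting. The plan is to compare $(u_\e,A_\e)^T$ with a model solution built from the nodal set $M_\e$, and to turn a solvability condition for the error into an elliptic equation for the graph of $M_\e$ over $M$.

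\textbf{Step 1 (ansatz and gauge).} Write $M_\e$ as the graph of a normal section $h_\e$ over $M$, with $\|h_\e\|_{Lip}\le L_0$. In Fermi coordinates $(y,z)\in M\times\mathbb R^2$ adapted to $M_\e$, build the approximate solution
\[
\Phi_\e(y,z)=\big(f(|z|/\e)e^{i\theta},\ a(|z|/\e)\,d\theta\big),
\]
which is the rescaled degree-one vortex, with $z=|z|e^{i\theta}$. Put $(u_\e,A_\e)$ in Coulomb gauge relative to $\Phi_\e$ and write $(u_\e,A_\e)=\Phi_\e+\phi_\e$. We then choose the Fermi base point, i.e. adjust $h_\e$ by a small normal translation, so that $\phi_\e$ is $L^2$-orthogonal on each normal disc to the two translational zero modes $\partial_{z_j}\Phi_\e$. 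The gauge zero modes are already killed by the Coulomb condition. Multiplicity one and graphical convergence give smallness of $\phi_\e$ in $C^0$ away from a scale-$\e$ error. Blowing up at scale $\e$, entire solutions close to the vortex times $\mathbb R^n$ are trivial, and this yields uniform Lipschitz control in the rescaled variables.

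\textbf{Step 2 (linear estimate).} The linearised operator $\mathcal L_\e$ at $\Phi_\e$ in Coulomb gauge is, after rescaling, $-\Delta_y+\mathcal L_{2D}$. Here $\mathcal L_{2D}$ is the linearised vortex operator, whose kernel is spanned exactly by the two translations; this is the standard nondegeneracy of the degree-one vortex, which we assume. Its spectral gap on the orthogonal complement gives the estimate
\[
\|\phi_\e\|_{C^{2,\alpha}_\e}\lesssim \|\text{error}\|_{C^{\alpha}_\e}+\text{(boundary terms with exponential decay)}.
\]
We derive it by a blow-up contradiction argument in weighted H\"older spaces, with exponential decay in $|z|/\e$.

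\textbf{Step 3 (Toda-type/minimal surface equation).} Project the equation $\mathcal N(\Phi_\e+\phi_\e)=0$ onto the translational modes $\partial_{z_j}\Phi_\e$ on each disc. The leading term is $c_0\,\e\,H_{M_\e}$, the mean curvature vector of $M_\e$, which comes from the Laplacian in Fermi coordinates involving the second fundamental form. This gives
\[
H(h_\e)=O\big(\e^{-1}\|\phi_\e\|^2+\e|A|^2\|\phi_\e\|+ e^{-c/\e}\big),
\]
where $H(h_\e)$ is the mean curvature operator, quasilinear elliptic in $h_\e$. Since there is one sheet, there are no interaction terms. Combining this with Step 2, the error itself is controlled by $\e\,|H_{M_\e}|$ plus $\e^2|A|^2$. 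We then get $\|H(h_\e)\|_{C^\alpha}\le C\e^{\gamma}(1+\|h_\e\|_{C^{2,\alpha}})$. Schauder estimates for the minimal surface system, together with the Lipschitz bound and $h_\e\to h_0$ in $C^0$, give a uniform $C^{2,\alpha}$ bound on $B_1$ via absorption and interpolation. Convergence to $M$ then follows from compactness.

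\textbf{Main obstacle.} The hardest part is Step 2 combined with the gauge fixing. We must show that the Coulomb gauge and the orthogonality conditions can be imposed simultaneously and uniformly, and that the coercivity constant of $\mathcal L_\e$ on the complement of the gauge and translational zero modes is independent of $\e$ in H\"older norms. My first attempt would be to prove coercivity in $L^2$ on discs using the vortex nondegeneracy and Bogomolny structure, and then upgrade it via the scale-$\e$ blow-up and Schauder estimates. A further point in Step 3 is that the curvature error $\e|A|^2$ must be dominated by $\|h_\e\|_{C^{2,\alpha}}$ small multiples so that it can be absorbed; to handle this we use $\alpha<1$ and the fact that the gain is a positive power of $\e$.
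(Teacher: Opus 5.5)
Your Steps 1--3 follow the same scheme as the paper's Sections 4--5:
\begin{itemize}
\item a cut-off vortex ansatz in Fermi coordinates about the nodal set;
\item the background gauge condition $d^\star\omega+\langle\phi,\imath\tilde u_0\rangle=0$, imposed jointly with translational orthogonality;
\item an elliptic estimate for $\Phi$;
\item projection onto the approximate translational modes, giving an equation for $H+\Delta h$.
\end{itemize}
There is, however, a genuine gap: you never obtain an a priori bound on the second fundamental form $\mathcal A_{M_\e}$ of the nodal sets. Every step of your construction needs $\e|\mathcal A_{M_\e}|\to 0$. This is what gives you Fermi coordinates on a tube of width $\gg\e$, the smallness of $\phi_\e$, $\e$-independent constants in the linear estimate, and quadratic size of the remainders. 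The paper in fact works under $|\mathcal A_{M_\e}|\le C$, so that the rescaled nodal set has curvature $O(\e)$ as in \eqref{SecondFundamentalForm_M}. Your plan to absorb $\e|\mathcal A|^2$ into a small multiple of $\|h_\e\|_{C^{2,\alpha}}$ does not replace this, because the inequality you would absorb into is itself only derived once that smallness is known. For the same reason your final Schauder step does not close from a Lipschitz bound alone. For the codimension-two mean curvature system the coefficients $g^{ij}(Dh_\e)$ must be H\"older, so you need $C^{1,\alpha}$ control. In the paper this comes from the $C^2$ bound; otherwise you would have to invoke Allard's theorem.

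The paper supplies the missing bound by a separate point-picking argument (the Remark in Section 4 and Theorem \ref{Curvature_Estimate_Weak}). Take $x_i\in M_{\e_i}$ with $\lambda_i=|\mathcal A|(x_i)\ge\frac12\sup|\mathcal A|\to\infty$ and rescale by $\lambda_i$. There are two cases.
\begin{itemize}
\item If $\lambda_i\e_i\to 0$, the rescaled nodal sets have curvature at most $2$ and flatten in Lipschitz norm. The $C^2\Rightarrow C^{2,\alpha}$ estimate (your Steps 1--3 under a curvature bound) then forces $C^2$ convergence to a plane, contradicting $|\mathcal A|(0)=1$.
\item If $\lambda_i\e_i\to c>0$, the limit is an entire solution at scale $\e$. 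Theorem \ref{Intermediate_Level_Set_Bounds} identifies it with the two-dimensional vortex, whose nodal set is flat, again a contradiction.
\end{itemize}
Your sentence that ``entire solutions close to the vortex times $\mathbb R^n$ are trivial'' points at the second case but presupposes the closeness. Graphical convergence of the zero sets alone does not tell you that scale-$\e$ blow-ups are near the vortex, nor that $\phi_\e$ is small. That identification is the Gibbons-type rigidity. The paper only has it from De Philippis--Halavati--Pigati, under an energy bound $2\pi+\tau_0$ and either $n+2\le 4$ or minimality. So you need two additional ingredients: the two-case blow-up to produce the curvature bound, and an explicit rigidity input for the scale-$\e$ case. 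With those in place, your Steps 1--3 (equivalently Propositions \ref{Holder_Phi} and \ref{Holder_Delta_h_plus_H}) complete the argument.
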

In the regime of almost unit density, an Allard type regularity theory also hold for the nodal sets of Allen-Cahn equations \cite{Savin2009, wang2014new}. These are based on a tilt excess decay and improvement of flatness argument. In the Yang--Mills--Higgs model, the regularity theory is complicated by the infinite dimensional gauge invariance. Recently, De Philippis--Halavati--Pigati \cite{DePhilippis2024} proved an excess decay result for the Abelian YMH model and a rigidity of unit density solution when the ambient dimension is less than or equal to $4$. Based on their rigidity result, we obtain an Allard type regularity theory for the Abelian YMH model under the same condition on ambient dimensions.
\begin{theorem}\label{Curvature_Estimate}
For any $\alpha\in[0,1)$ and $b\in(0,1)$, there exists $\e_0,\tau_0,C_0, R_0>0$ depending only on the ambient dimension $n$, such that the following holds: Suppose $(u_\e,A_\e)^T$ is a critical point for $E_\e$  (as in \eqref{YMH_Energy})  defined on $B_{R_0}^{n+2}(0)\subset\mathbb R^{n+2}$ with $\e\leq\e_0$ and satisfies either $n+2\leq 4$, or $u_\e$ is a minimising solution. If $u_\e$ further satisfies $|u_\e(0)|\leq 1-b$,
	\begin{align*}
	\frac{1}{\left|B_{R_0}^n\right|} \int_{B_{R_0}^{n+2}}\left[|\nabla^A u|^2+\e^2\left|F_A\right|^2+\frac{1}{4\e^2}\left(1-|u|^2\right)^2\right] \leq 2 \pi+\tau_0,
	\end{align*}
	and the Coulomb gauge condition
	\begin{align*}
	d^\star A&=0\\
	A(\nu)|_{B_{R_0}}&=0.
	\end{align*}
Then up to a rotation in $\mathbb R^{n+2}$, the level sets $t$ of $u_\e$ with $|t|\leq1-b$ can be written as graphs $\{u_\e=t\}=\{x_{n+1}=f^t_{1,\e}(x_1,\dots,x_n),x_{n+2}=f^t_{2,\e}(x_1,\dots,x_n)\}$ over $B^n_1(0)$. 

Moreover, the graphical functions satisfy
	\begin{align*}
	\|f^t_{j,\e}\|_{C^{2,\alpha}(B_1^n(0))}\leq C_0,\quad j=1,2.
	\end{align*}
\end{theorem}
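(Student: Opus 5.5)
The proof will reduce Theorem \ref{Curvature_Estimate} to Theorem \ref{main} by a compactness argument combined with the rigidity and excess decay of De Philippis--Halavati--Pigati \cite{DePhilippis2024}.

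\emph{Step 1: compactness and flatness.} Argue by contradiction. Suppose there are $\e_k\to 0$, $\tau_k\to 0$, $R_k\to\infty$ and critical points $(u_k,A_k)^T$ satisfying the hypotheses, but whose level sets either fail to be graphs over $B_1^n$ or have $C^{2,\alpha}$ norms tending to infinity. Rescale so that the relevant scale is fixed. By Pigati--Stern \cite{Pigati2019}, the energy measures converge to a stationary integral $n$-varifold $V$ in $\mathbb R^{n+2}$ with density at most $1$ at infinity. Since $|u_k(0)|\le 1-b$, the clearing-out lemma gives $0\in\spt V$, with density at least $1$. Monotonicity then forces $V$ to be a cone of density exactly $1$, hence a multiplicity-one $n$-plane, say $\{x_{n+1}=x_{n+2}=0\}$ after a rotation.

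The same argument applies at the scale $\e$. Blowing up around points of $\{|u_k|\le 1-b\}$ gives entire solutions with $\e=1$ and energy density ratio at most $2\pi$ at all scales. The rigidity theorem of \cite{DePhilippis2024} (for $n+2\le 4$), or the minimising hypothesis in general, implies that such a solution is, up to gauge, the standard degree-one vortex $\Psi_1$ extended trivially along $\mathbb R^n$.

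\emph{Step 2: graphicality and Lipschitz control.} Use the excess decay of \cite{DePhilippis2024}, iterated across all scales between $\e$ and $1$, to show that at every point $p\in\{|u|\le 1-b\}$ and every scale $r\in[\e,1]$ the tilt excess relative to some plane $P_{p,r}$ is small. Moreover, these planes vary in a Hölder-continuous way in $(p,r)$. At scale $\e$, the solution near $p$ is close in $C^k$ (in the Coulomb gauge assumed in the statement, which fixes the gauge freedom) to $\Psi_1$ translated to $p$ and aligned with $P_{p,\e}$. By the implicit function theorem applied to the nondegenerate zero of $\Psi_1$, each level set $\{u=t\}$ with $|t|\le 1-b$ is locally a smooth graph over $P_{p,\e}$. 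The tilt control then glues these local graphs into global graphs $f^t_{j,\e}$ over $B_1^n$ with uniform Lipschitz bound.

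\emph{Step 3: upgrade via Theorem \ref{main}.} Step 2 gives nodal sets converging graphically, with bounded Lipschitz norm, to a minimal limit, namely the plane. So Theorem \ref{main} applies, and it yields uniform $C^{2,\alpha}$ bounds for $f^0_{j,\e}$, contradicting the assumption in Step 1. For the other level sets $t\neq 0$, note that $\{u=t\}$ lies at distance $O(\e)$ from the nodal set, in the normal direction determined by the profile of $\Psi_1$. I would expect the proof of Theorem \ref{main}, through its approximate-solution ansatz in Fermi coordinates, to give $C^{2,\alpha}$ control of the error term. Then $f^t$ is $f^0$ plus an $\e$-scaled smooth profile correction whose $C^{2,\alpha}$ norm is uniformly bounded.

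\emph{Main obstacle.} The hardest step is Step 2: iterating excess decay uniformly down to scale $\e$ while keeping the gauge under control, so that the plane-plus-vortex approximation holds at every intermediate scale with summable errors. I would first follow the Savin / Wang \cite{Savin2009,wang2014new} scheme. The improvement-of-flatness statement needed there is exactly the excess decay of \cite{DePhilippis2024}, and below scale $\e$ it is replaced by the rigidity theorem. The two regimes are matched at scales comparable to $\e$ through the compactness of Step 1.
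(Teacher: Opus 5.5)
Your plan follows the paper's proof. The paper obtains uniform Lipschitz graphicality of the level sets in Section~\ref{Lipschitz_Section}, following Wang: it iterates the De Philippis--Halavati--Pigati excess decay down to scale $\e$ and matches it there with their rigidity theorem and the gradient lower bound of Theorem~\ref{Intermediate_Level_Set_Bounds}; it then upgrades the Lipschitz bound to $C^{2,\alpha}$ through the Section~\ref{Improved_Regularity} estimates, packaged as the point-picking Theorem~\ref{Curvature_Estimate_Weak} rather than by quoting Theorem~\ref{main} directly. Your treatment of the level sets with $t\neq 0$ remains heuristic, but the paper's upgrade is likewise carried out only for the nodal set, so this is not a departure from its argument.
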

\begin{remark}
The missing ingredient for generalising this Allard type regularity theorem to general ambient dimensions is a rigidity statement for solutions with unit density at infinity, also known as the Gibbon's Conjecture (\ref{Gibbon}, c.f. \cite[Conjecture 1.8]{DePhilippis2024}). Once the rigidity statement is known, there is a general procedure in proving local Lipschitz regularity for solutions with almost unit density (see Section \ref{Lipschitz_Section}, c.f. \cite{wang2014new}). And the Lipschitz regularity can be improved to $C^{2,\alpha}$ regularity by Theorem \ref{main}.
\end{remark}
Our approach adapts Allard's regularity theory to this gauge-theoretic setting. We construct approximate solutions by scaling the classical degree-one vortices in $\mathbb{R}^2 $ and localising them via Fermi coordinates around a prescribed minimal submanifold $M \subset N $. The error of this ansatz is controlled via a linearised analysis, with perturbations imposed to be orthogonal to the gauge and translational zero modes. 

We establish Lipschitz regularity and curvature bounds for the constructed solutions and derive precise estimates for the discrepancy from being an exact solution. A key technical tool is the decomposition of the linearised operator into elliptic and gauge components, and the enforcement of Coulomb gauge conditions to isolate the geometric content. These results extend and refine earlier work on the Ginzburg--Landau and Abelian Yang--Mills--Higgs models and provide a robust analytic framework for understanding vortex concentration in higher codimension.

We organise the paper as follows. In Section \ref{Preliminary} and Section \ref{Fermi}, we gather the necessary preliminaries as well as notations for submanifold geometry and Fermi coordinates for later analysis. In Section \ref{ApproximateSolutions} we construct an approximate solution the YMH equation based on the initial Lipschitz regularity. In section \ref{Improved_Regularity}, we use the equation satisfied by the difference of the YMH equation and the approximate solution to obtain improved regularity, giving a proof of Theorem \ref{main}. In Section \ref{Lipschitz_Section} and Section \ref{CurvatureBound_Section}, we show that the initial Lipschitz regularity is satisfied when the solution is minimising or the ambient dimension is less than or equal to $4$, which combining with Theorem \ref{main} gives a proof of Theorem \ref{Curvature_Estimate}.

\textbf{Acknowledgements.} The first author was financially supported by the EPSRC through the grant ``Geometric Flows and the Dynamics of Phase Transitions" (EP/Y017862/1).

\section{Preliminaries and notations}\label{Preliminary}
\subsection{Variational Equations}
We follow the notation of Badran--Del Pino \cite{Badran2023, Badran2024}. 

Let $L \rightarrow N$ be a Hermitian line bundle over a smooth Riemannian manifold $(N, g)$. For a section $u \in \Gamma(L)$ and a metric connection $\nabla^A$ on $L$, we define for each $\e>0$ the $U(1)$ Yang--Mills--Higgs energy as
	\begin{align}\label{eqn_AbYMHEnergy}
	E_{\e}\begin{pmatrix}u\\ A\end{pmatrix}:=\int_N\left[|\nabla^A u|^2+\e^2\left|F_A\right|^2+\frac{\left(1-|u|^2\right)^2}{4 \e^2}\right],
	\end{align}
where $F_A$ is the curvature of the connection $\nabla^A$. Equivalently, on the trivial bundle, for any section $u$ (viewed as a function $M \rightarrow \mathbb{C}$) and connection $\nabla^A=d-\imath A$ we have
	\begin{align*}
	E_{\e}\begin{pmatrix}u\\ A\end{pmatrix}=\int_N\left[|d u-\imath u A|^2+\e^2|d A|^2+\frac{\left(1-|u|^2\right)^2}{4 \e^2}\right].
	\end{align*}
In order to compute the Euler-Lagrange equation, we consider a first variation of the form $u_t= u+tv$ where $v\in \Gamma(L)$ is a section with compact support and $A_t = A + t B$ where $B$ is a one-form with compact support. This gives us
	\begin{align*}
	&\frac{d}{dt}\bigg|_{t=0} E_\e(u_t,A_t)\\&=\int 2 \Re\llangle {\nabla^A}^\star\nabla^A u, v \rrangle - 2 \Re \llangle (\nabla-\imath A)u,\imath B u \rrangle+\e^2\int 2\llangle B, d^*dA\rrangle-\frac{1}{2\e^2}\int (1-|u|^2) 2\Re \llangle u,v \rrangle.
	\end{align*}
Replacing $v$ with $\imath v$ and recalling that $A,B$ are real valued, we see that a smooth pair $\begin{pmatrix}u,A\end{pmatrix}^T$ gives a critical point for the Abelian Yang--Mills--Higgs energy if and only if it satisfies the system of partial differential equations:
	\begin{align}\label{eqn_AbYMH}
	\begin{split}
	{\nabla^A}^\star\nabla^A u&=\frac{1}{2 \e^2}\left(1-|u|^2\right) u, \\
	\e^2 d^*d A&=\langle\nabla u, \imath u\rangle
	\end{split}
	\end{align}
where ${\nabla^A}^\star$ is the adjoint of $\nabla^A$, while $d^*$ is the adjoint of $d: \Omega^1(N) \rightarrow \Omega^2(N)$, given by
	\begin{align*}
	\left(d^* A\right)\left(e_k\right)=-\sum_{j=1}^n\left(\nabla_{e_j} A\right)\left(e_j, e_k\right)
	\end{align*}
for some orthonormal frame $\left\{e_j\right\}$ and $\langle \cdot,\cdot \rangle = \Re \llangle \cdot,\cdot\rrangle$, the real part of the Hermitian inner product $\llangle\cdot,\cdot\rrangle$. We note here that $\langle\nabla u, \imath u\rangle = \langle \imath u, \nabla u \rangle = \Re \llangle \imath u, \nabla u \rrangle $ is a one-form. The Euler--Lagrange equation \eqref{eqn_AbYMH} can be denoted by
	\begin{align}\label{YMH_Operator}
	S_\e\begin{pmatrix}u\\A\end{pmatrix}=0.
	\end{align}
	If $(u_\e, A_\e)$ is a solution to the $\e$ - equation \eqref{eqn_AbYMH}, we have by rescaling that the $(\hat u_\e(x), \hat A_\e(x))=:(u_\e(\e x),\e A_\e(\e x))$ satisfies equation with $\e=1$:
	\begin{align}\label{eqn_AbYMH1}
		\begin{split}
	{\nabla^A}^\star\nabla^A u&=\frac{1}{2}\left(1-|u|^2\right) u \\
	d^*d A&=\langle\nabla u, \imath u\rangle
	\end{split}.
	\end{align}

\subsection{Linearised operator, kernel and gauge invariance}
Let $\Omega\subset  N$ be an open bounded domain in an $n+2$ - dimensional Riemannian manifold $(N^{n+2},g)$ and $L\rightarrow\Omega$ be a Hermitian line bundle. Without loss of generality, we can choose the domain $\Omega$ so that the bundle is trivialisable $L\cong\Omega\times\mathbb C$ and consider pairs $W = (u, A)^T $, where $u$ is a section of $L$ and $A $ is a connection given by a one-form defined on $\Omega$. We define an $\e$-dependent $L^2 $-inner product on these pairs by setting
	\begin{align}\label{InnerProduct}
	\left(W_1, W_2\right)_\e&= \int_\Omega \binom{u_1}{A_1} \cdot \binom{u_2}{A_2} \\
	&= \int_\Omega \left\langle u_1, u_2 \right\rangle + \e^2 A_1 \cdot A_2,
	\end{align}
where $\left\langle u_1, u_2 \right\rangle = \Re (u_1 \bar{u}_2) $ denotes the real part of $(u_1 \bar{u}_2) $ and $A_1 \cdot A_2 = g^{ij} (A_1)_i (A_2)_j $, with $g $ being the metric on $N $.
Recall that the energy functional \eqref{eqn_AbYMHEnergy} is invariant under $U(1) $-gauge transformations $\mathrm{G}_\gamma $, given by
	\begin{align}\label{eqn_InvariantGaugeTransformations}
	\mathrm{G}_\gamma \binom{u}{A} = \binom{u e^{\imath \gamma}}{A + d\gamma}
	\end{align}
for any smooth function $\gamma $ on $N $. This invariance induces an infinite-dimensional part of the kernel of the linearised operator $S'_{\e}(W) $, which we denote by $Z_{W,g} $. This part of the kernel is generated by the gauge-zero modes, namely
	\begin{align}\label{Gauge_Kernel}
	Z_{W, g} = \operatorname{span}_\gamma \left\{\Theta_W[\gamma]\right\},
	\end{align}
where
	\begin{align*}
	\Theta_W[\gamma] = \binom{\imath u \gamma}{d \gamma},
	\end{align*}
and $\gamma $ varies over all smooth functions defined on $N $. We wish to remove this kernel - we do so by asking that our variations be orthogonal to this kernel - this can be achieved as follows : we consider the space of perturbations $X$ such that the elements $(\xi, B) \in X$ are orthogonal to the gauge-zero modes. That is,
	\begin{align*}
	\left\langle\binom{i \gamma u}{\nabla \gamma} \cdot\binom{\xi}{B}\right\rangle_\e=0
	\end{align*}
for all $\gamma$. Integration by parts gives the gauge condition
	\begin{align}\label{eqn_GaugedPerturbations}
	\Theta_{\e,W}^\star\binom{\xi}{B}=\Re(\imath u\bar\xi)+\e^2d^\star B=0,
	\end{align}
	where $\Theta_{\e,W}^\star$ is the adjoint operator of $\Theta_W$ with respect to the inner product \eqref{InnerProduct}.

Using the operator $\Theta_W$ and its adjoint, we can decompose the linearization (at $W=\begin{pmatrix}u, A\end{pmatrix}^T$) of the Yang-Mills-Higgs operator $S_\e$ in \eqref{YMH_Operator} as
	\begin{align}\label{Decomposition}
	S'_{\e,W}=L_\e-\Theta_W\Theta_{\e,W}^\star,
	\end{align}
where
	\begin{align}\label{Elliptic_L}
	L_{\e,W}\begin{pmatrix}\phi\\\omega\end{pmatrix}:=\begin{pmatrix}-\e^2\Delta^A\phi-\frac{1}{2}(1-3|u|^2)\phi+2\imath\e^2\nabla^Au\cdot\omega)\\-\e^2\Delta\omega+|u|^2\omega-2\langle\nabla^Au,\imath\phi\rangle\end{pmatrix}.
	\end{align}

\begin{remark}
The operator $S'_\e$ is not an elliptic operator due to the gauge invariance which induces an infinite dimensional kernel, but it is readily verifiable that $L_{\e,W}$ is elliptic.
\end{remark}

\subsection{Two Dimensional Solutions}\label{2dVortex}
In \cite{Taubes1980}, solutions of \eqref{eqn_AbYMH} were found in the planar case $\mathbb{R}^2$ with isolated zeros (vortices) of $u$. And in particular in \cite{Berger1989}, solutions of \eqref{eqn_AbYMH} were found with $\e=1$ in $\mathbb{R}^2$ with a degree $1$ radial symmetry, namely $U_0=\left(u_0, A_0\right)^T$ where
	\begin{align}\label{eqn_2DSolution}
	u_0(\zeta)=f(r) e^{i \theta}, \quad A_0(\zeta)=a(r) d \theta, \quad \zeta=r e^{i \theta}.
	\end{align}
The functions $f(r)$ and $a(r)$ are positive solutions to the system of ODE
	\begin{align}\label{eqn_SecondOrderODE2D}
	\left\{\begin{array}{l}
-f^{\prime \prime}-\frac{f^{\prime}}{r}+\frac{(1-a)^2 f}{r^2}-\frac{1}{2} f\left(1-f^2\right)=0 \\
	-a^{\prime \prime}+\frac{a^{\prime}}{r}-f^2(1-a)=0
\end{array}, \quad \text{ in }(0,+\infty)\right.
	\end{align}
with $f(0)=a(0)=0$ which are known as the Bogomolny equations \cite{Bogomolny1976}.

In general, the second-order equations can be reduced to a pair of first-order equations in the self-dual case under consideration. In dimension $2$, Bogomolny observed that the Abelian YMH functional has a lower bound that depends only on the degree of the line bundle $L$. This follows from the identity
	\begin{align*}
	E_{\e}(u,A)&= \int_{\mathbb{R}^2} \left(\e * (i F_A) - \frac{1}{2\e}(1 - |u|^2) \right)^2 + 2 \int_{\mathbb{R}^2} |\bar{\partial}_A u|^2 + 2\pi c_1(L).
	\end{align*}
It follows that
	\begin{align*}
	E_{\e}(u,A) \geq 2\pi c_1(L)
	\end{align*}
with equality if and only if $(A, u)$ satisfies the vortex equations:
	\begin{align*}
	\e * (i dA)= \frac{1}{2\e} (1 - |u|^2),\quad
	\bar{\partial}_A u= 0.
	\end{align*}
In particular, if $(u,A)$ satisfies the vortex equations, then we obtain the relation:
	\begin{align*}
	\e^2 |F_A|^2 = \frac{1}{4\e^2} (1 - |u|^2)^2.
	\end{align*}
This identity will play an important role in subsequent arguments.
Using this, we can show that these second order equations can be reduced to a pair of first order equations.
The degree $1$ vortices \eqref{eqn_2DSolution} are solutions of these equations. Specifically,
	\begin{align*}
	\left\{
\begin{array}{l}
 \frac{a^{\prime}}{r}=\frac{1}{2}\left(1-f^2\right)\\
	f^{\prime}= \frac{(1-a) f}{r}
\end{array} \quad \text{ in }(0,+\infty).
\right.
	\end{align*}
This is the unique solution of \eqref{eqn_AbYMH} with $\e=1$ and exactly one zero with degree $1$ at the origin. Moreover, $U_0$ is linearly stable as established in \cite{Gustafson2000}, \cite{Stuart1994}. In addition, we have
	\begin{align*}
	f(r)-1=O\left(e^{-r}\right), \quad a(r)-1=O\left(e^{-r}\right), \quad \text{ as } r \rightarrow \infty,
	\end{align*}
see \cite{Berger1989}, \cite{Plohr1981}. 

Following \cite{Badran2024}, we denote by $\vee_\beta=\partial_\beta U_0$ and $Z_{U_0,t}=\operatorname{span}\left\{\vee_1,\vee_2\right\}$, where
	\begin{align}\label{Zero_Mode}
	\vee_1=\begin{pmatrix}f'\\\frac{a'}{r}dz^2\end{pmatrix},\vee_2=\begin{pmatrix}\imath f'\\-\frac{a'}{r}dz^1\end{pmatrix}.
	\end{align}
The kernel of the linearised operator $S'_{\e,U_0}$ is given by $Z_{U_0,t}\oplus Z_{U_0,g}$, with $Z_{U_0,g}$ the infinite dimensional kernel coming from gauge invariance defined in \eqref{Gauge_Kernel} (cf. \cite{Gustafson2000}). i.e.
\begin{proposition} We have
\begin{enumerate}[i)]
\item
	\begin{align}\label{eqn_GaugeZeroMode}
	L_\e\binom{i \gamma u_0}{\nabla \gamma}=\mathbf{0}
	\end{align}
for any $\gamma: \mathbb{R}^2 \rightarrow \mathbb{R}$.
\item
	\begin{align}\label{eqn_TranslationZeroModeUngauged}
	L_\e\vee_j=0
	\end{align}
for $j=1,2$.
\end{enumerate}
\end{proposition}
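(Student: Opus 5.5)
The plan is to obtain both identities from the decomposition \eqref{Decomposition}, read as $L_{\e,W}=S'_{\e,W}+\Theta_W\Theta^\star_{\e,W}$ at $W=U_0$. I will work with the normalisation $\e=1$ throughout, since $U_0$ solves \eqref{eqn_AbYMH1}. For each claimed kernel element $Z$, I will check separately that $S'_{1,U_0}Z=0$ and that $\Theta_{U_0}\Theta^\star_{1,U_0}Z=0$.

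\emph{Step 1: the translation modes (ii).} Since \eqref{eqn_AbYMH1} is invariant under translations of $\mathbb R^2$, the family $U_0(\cdot+s e_\beta)$ consists of solutions. Differentiating at $s=0$ gives $S'_{1,U_0}\partial_\beta U_0=0$. The vectors $\vee_\beta$ in \eqref{Zero_Mode} are obtained from $\partial_\beta U_0$ by subtracting the gauge mode $\Theta_{U_0}[A_{0,\beta}]$. This makes them the covariant derivatives $(\nabla^{A_0}_\beta u_0,\ \iota_{\partial_\beta}F_{A_0})$. Because gauge invariance of the energy also gives $S'_{1,U_0}\Theta_{U_0}[\gamma]=0$, we get $S'_{1,U_0}\vee_\beta=0$.

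It then remains to show $\Theta^\star_{1,U_0}\vee_\beta=0$, i.e. $\Re(\imath u_0\overline{\xi_\beta})+d^\star B_\beta=0$. I will check this in polar coordinates. Write $\nabla^{A_0}u_0$ using $f'=(1-a)f/r$ and $\iota_{\partial_\beta}F_{A_0}$ using $a'/r=\tfrac12(1-f^2)$. The identity $\langle \nabla^{A_0}_\beta u_0,\imath u_0\rangle=\e^2(d^\star dA_0)_\beta$, which is the second line of \eqref{eqn_AbYMH1}, then yields the cancellation, since $d^\star(\iota_{\partial_\beta}F_{A_0})=-(d^\star F_{A_0})_\beta$ on flat $\mathbb R^2$.

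\emph{Step 2: the gauge modes (i).} Differentiating $S_1(G_{s\gamma}U_0)=0$ in $s$ at $s=0$ gives $S'_{1,U_0}\Theta_{U_0}[\gamma]=0$ for every $\gamma$. Hence $L_1\Theta_{U_0}[\gamma]=\Theta_{U_0}\big(\Theta^\star_{1,U_0}\Theta_{U_0}[\gamma]\big)$. I will then compute $L_1\Theta_{U_0}[\gamma]$ directly from \eqref{Elliptic_L} as a cross-check.

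For the first component, I expand $-\Delta^{A_0}(\imath\gamma u_0)$ as $-\imath\gamma\Delta^{A_0}u_0-2\imath\nabla\gamma\cdot\nabla^{A_0}u_0-\imath u_0\Delta\gamma$. The middle term cancels against $2\imath\nabla^{A_0}u_0\cdot d\gamma$. The remaining terms combine, using the first equation of \eqref{eqn_AbYMH1}, into $\imath u_0\big(|u_0|^2\gamma-\Delta\gamma\big)$.

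For the second component, $-\Delta d\gamma$ equals $d(-\Delta\gamma)$ on $\mathbb R^2$. The term $-2\langle\nabla^{A_0}u_0,\imath(\imath\gamma u_0)\rangle$ equals $2\gamma\langle\nabla^{A_0}u_0,u_0\rangle=\gamma\, d|u_0|^2$. Together with $|u_0|^2d\gamma$ this gives $d\big(|u_0|^2\gamma-\Delta\gamma\big)$.

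So the computation exhibits $L_1\Theta_{U_0}[\gamma]=\Theta_{U_0}[\,|u_0|^2\gamma-\Delta\gamma\,]$. This vanishes precisely when $\Theta^\star_{1,U_0}\Theta_{U_0}[\gamma]=0$.

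\emph{Main obstacle.} The hardest step is this last one. The proposition as worded asserts vanishing for arbitrary $\gamma$, while the computation isolates the residual factor $(-\Delta+|u_0|^2)\gamma$. To close the argument, I will first check the sign conventions in \eqref{Decomposition} and \eqref{eqn_GaugedPerturbations}, in particular whether $\Delta$ there is the analyst's or the geometer's Laplacian. The aim is to determine whether the intended statement is $S'_{1,U_0}\Theta_{U_0}[\gamma]=0$, equivalently $L_1\Theta[\gamma]=\Theta\Theta^\star\Theta[\gamma]$, which the computation proves for every $\gamma$. If the identity is to be read literally, the computation shows it holds exactly on those $\gamma$ in the kernel of $-\Delta+|u_0|^2$. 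I expect to record the result in that form, with the verification for the translation modes unaffected.
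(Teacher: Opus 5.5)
The paper gives no proof of this proposition; it is quoted from Gustafson--Sigal and Badran--del Pino, so there is no argument of the paper's to compare yours against. Your treatment of (ii) is correct and is the standard argument. Translation and gauge invariance put $\vee_\beta=\partial_\beta U_0-\Theta_{U_0}[A_{0,\beta}]=(\nabla^{A_0}_\beta u_0,\iota_{\partial_\beta}F_{A_0})$ in $\ker S'_{1,U_0}$. Then $\Theta^\star_{1,U_0}\vee_\beta=\langle \imath u_0,\nabla^{A_0}_\beta u_0\rangle-(d^\star dA_0)_\beta=0$ by the second equation of \eqref{eqn_AbYMH1}, read with $\nabla^{A}$ as you do. This cancellation needs neither polar coordinates nor the first-order equations. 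Hence $L_1\vee_\beta=S'_{1,U_0}\vee_\beta+\Theta_{U_0}\Theta^\star_{1,U_0}\vee_\beta=0$. Working at $\e=1$ is also the right reading, since $u_0$ is the $\e=1$ profile.

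For (i), your identity $L_1\Theta_{U_0}[\gamma]=\Theta_{U_0}[(-\Delta+|u_0|^2)\gamma]$ is correct, and it already settles the matter; the planned sign-convention check cannot change the outcome. Whatever normalisation of $\Delta$ is used, $\Theta^\star_{1,U_0}\Theta_{U_0}=d^\star d+|u_0|^2$ is a nonnegative operator. Moreover $\Theta_{U_0}$ is injective: $d\psi=0$ and $\imath u_0\psi=0$ force $\psi=0$. So for any nonzero $\gamma\in C^\infty_c(\mathbb R^2)$, pairing with $\gamma$ gives $\int|d\gamma|^2+|u_0|^2\gamma^2>0$, and therefore $L_1\Theta_{U_0}[\gamma]\neq0$.

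Thus (i) as literally stated with $L_\e$ is false. What holds for every $\gamma$ is $S'_{1,U_0}\Theta_{U_0}[\gamma]=0$, which your gauge-invariance step proves. This is what the sentence just before the proposition intends when it describes $\ker S'_{\e,U_0}$ as $Z_{U_0,t}\oplus Z_{U_0,g}$. It is also the reason $\Theta\Theta^\star$ is added in \eqref{Decomposition}: the gauge-fixed operator $L$ is meant to keep only the translational kernel.

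You should therefore commit to this reading rather than hedge:
\begin{itemize}
\item prove (i) for $S'_{1,U_0}$;
\item prove (ii) for both $S'_{1,U_0}$ and $L_1$;
\item record that the $L$-version of (i) holds exactly on $\ker(-\Delta+|u_0|^2)$.
\end{itemize}
That kernel contains no nonzero bounded $\gamma$. Indeed $\Delta(\gamma^2)=2|\nabla\gamma|^2+2|u_0|^2\gamma^2\geq0$, and bounded subharmonic functions on $\mathbb R^2$ are constant.
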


\subsection{Rigidity Theorem for the Two Dimensional Solution}
We will gather here some rigidity properties of the  2-dimensional vortex solution (\ref{2dVortex}) of degree 1 under appropriate conditions, which will be used later in this paper. The general rigidity result of this degree 1 vortex solution is still open and is formulated as the Gibbon's conjecture (c.f. \cite[1.8]{Philippis2024}).  
\begin{conjecture}\label{Gibbon}
An entire critical point $\begin{pmatrix}u, A\end{pmatrix}^T$ on $\mathbb{R}^{n+2}$ satisfying
	\begin{align*}
	\lim_{R \rightarrow \infty} \frac{1}{\left|B_R^n\right|} \int_{B_R^{n+2}} e_{\e}\begin{pmatrix}u\\ A\end{pmatrix}=2 \pi
	\end{align*}
and, writing any $x \in \mathbb{R}^{n+2}$ as $x=(y, z) \in \mathbb{R}^2 \times \mathbb{R}^n$, also satisfying
	\begin{align*}
	\lim_{|z| \rightarrow \infty}|u(y, z)|=1, \quad \text{ uniformly in } z,
	\end{align*}
is necessarily two-dimensional. More precisely, it is the pullback through the projection $\mathbb{R}^{n+2} \rightarrow \mathbb{R}^2$ of the standard solution in $\mathbb{R}^2$ with degree $\pm 1$, up to translation and change of gauge.
\end{conjecture}
\begin{remark}
The parallel result of the Gibbon's conjecture for minimal submanifolds and for Allen--Cahn equations are known. The statement for minimal submanifold is trivial (complete minimal submanifolds with density 1 at infinity is flat). And the corresponding statement for Allen-Cahn equation is proved by Wang \cite{wang2014new}.
\end{remark}
This conjecture is shown to be true for $2\leq n+2 \leq 4$ and for minimisers in \cite{Philippis2024} without the condition $\lim_{|z| \rightarrow \infty}|u(y, z)|=1$. In the proof, they used a quantitative stability of the two dimensional solution at the energy level \cite{Halavati2024}.
\begin{theorem}[Theorem 1.9 of \cite{Philippis2024}]\label{Theorem 1.9} There exists $\tau_0(n)>0$ such that the following holds : Let $\begin{pmatrix}u,A\end{pmatrix}^T$ be an entire critical point for the energy $E_1$, satisfying \eqref{eqn_AbYMH} for $\e=1$, with $u(0)=0$ and the energy bounds
	\begin{align*}
	\lim_{R \rightarrow \infty} \frac{1}{\left|B_R^n\right|} \int_{B_R^{n+2}}\left[|\nabla^A u|^2+\left|F_A\right|^2+\frac{1}{4}\left(1-|u|^2\right)^2\right] \leq 2 \pi+\tau_0.
	\end{align*}
	Moreover, suppose either $2\leq n+2\leq 4$, or $u$ is a minimiser.
	
Then $\begin{pmatrix}u,A\end{pmatrix}^T$ is two-dimensional. More precisely, we have $\begin{pmatrix}u,A\end{pmatrix}^T=P^*\begin{pmatrix}u_0,A_0\end{pmatrix}^T$ up to a change of gauge, where $P$ is the orthogonal projection into a two-dimensional subspace and $\begin{pmatrix}u_0,A_0\end{pmatrix}^T$ is the standard degree-one solution of Taubes \cite{Taubes1980} (or its conjugate), centered at the origin.

\end{theorem}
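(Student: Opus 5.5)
Since this is Theorem 1.9 of \cite{Philippis2024}, I would follow the blow-down/rigidity strategy used there. I would combine the monotonicity formula with the codimension-two convergence theory of Pigati--Stern and with the quantitative two-dimensional stability of \cite{Halavati2024}.

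\textbf{Step 1 (monotonicity and density bounds).} For an entire critical point with $\e=1$ I would use the Pigati--Stern pointwise inequality $|F_A|\le \frac12(1-|u|^2)$. This inequality makes the codimension-two monotonicity formula hold with a good sign: $\Theta(x,R)=R^{-n}\int_{B_R(x)}e_1(u,A)$ is nondecreasing in $R$. The derivative of $\Theta$ controls the radial components $|\nabla^A_{\partial_r}u|^2$ and $|\iota_{\partial_r}F_A|^2$ on $\partial B_R$, together with the discrepancy $\frac14(1-|u|^2)^2-|F_A|^2\ge0$. Since $u(0)=0$, a clearing-out/small-scale argument gives $\lim_{R\to\infty}\Theta(0,R)\ge 2\pi|B_1^n|$. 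The hypothesis then pins the density ratio in the window $[2\pi,2\pi+\tau_0]\cdot|B^n_1|$ at all large scales. Consequently, the integrated monotonicity error between scales $R$ and $2R$ tends to zero as $R\to\infty$.

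\textbf{Step 2 (blow-down is a multiplicity-one plane).} I would rescale by $(u,A)\mapsto(u(Rx),RA(Rx))$, which is a critical point of $E_{1/R}$. By Pigati--Stern, the energy measures converge to an integral stationary codimension-two varifold, whose multiplicities are integer multiples of $2\pi$. Step 1 shows this varifold is a cone with density at most $1+\tau_0/2\pi$. Allard's theorem (or simply integrality plus the density bound for cones) then forces it to be a multiplicity-one $n$-plane $P^\perp$. Hence for $R$ large the solution is close in energy, at scale $R$, to a vortex sheet along a plane.

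\textbf{Step 3 (excess decay and rigidity).} This step is the main obstacle. The goal is to upgrade closeness at large scales into exact two-dimensionality. I would define a tilt-type excess, for instance $\mathcal E(R)=R^{-n}\int_{B_R}\big(|\nabla^A_{P^\perp}u|^2+|F_A|_{\text{mixed/tangential}}|^2\big)$, measuring the energy not carried by the two-dimensional slices orthogonal to a plane. Slicing by $2$-planes parallel to $P$, the Bogomolny identity gives slice energy $\ge 2\pi$. The quantitative stability of \cite{Halavati2024} then says that a slice deficit controls the gauge-invariant distance of the slice to a translated degree-one vortex $U_0$. Combining this with the density upper bound shows that the total excess is bounded by the monotonicity defect plus the oscillation of the vortex centres and orientation. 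Linearising around $P^*U_0$, modulo the gauge kernel $Z_{U_0,g}$ and the translation modes $\vee_1,\vee_2$ of \eqref{Zero_Mode}, the centre map solves a harmonic-type equation in the limit. This should give an improvement $\mathcal E(\theta R)\le\frac12\mathcal E(R)$.

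The dimension restriction $n+2\le4$ should enter here: in low dimension one can control the error terms in the slicing argument, whereas for minimisers one replaces this by a competitor construction, gluing a straight vortex sheet to compare energies. Iterating the improvement from $R=\infty$ downward gives $\mathcal E\equiv0$, so all derivatives along $P^\perp$ vanish up to gauge. Then $(u,A)$ is the pullback of a planar degree-one solution with a zero at the origin. By uniqueness of degree-one vortices \eqref{eqn_2DSolution}, it equals $P^*(u_0,A_0)$ or its conjugate, up to gauge.
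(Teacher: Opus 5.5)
The paper does not prove this statement. It imports it from \cite{Philippis2024} and remarks only that the proof rests on the quantitative stability of the vortex from \cite{Halavati2024}. Your Steps 1--2 are consistent with that route: monotonicity from $|F_A|\le\frac12(1-|u|^2)$, integrality of the blow-down, and Allard giving a multiplicity-one planar blow-down. Your sketch of an excess decay built on that stability estimate is also consistent with it. The genuine gap is at the end of Step 3, which is where the theorem's real content lies. You assume an unconditional improvement $\mathcal E(\theta R)\le\frac12\mathcal E(R)$ and iterate it from $R=\infty$ to get $\mathcal E\equiv0$. The available excess decay does not work that way. In the proof of the Morrey-type bound in Section~\ref{Lipschitz_Section}, the paper invokes \cite[Theorem 1.4]{Philippis2024} only when $\varepsilon^{-2}r^2\mathbf{E}_1(B_r,S)\ge K_E$. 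That is, the decay applies only above a floor of order $(\varepsilon/r)^2$, which for $\varepsilon=1$ is $r^{-2}$. Iterating from infinity therefore gives only $\mathbf{E}_1(B_R)\lesssim R^{-2}$. After passing to the limiting plane (the tilts between best planes on dyadic scales are $O(R^{-1})$, hence summable), this becomes $\int_{B_R}(\text{integrand of }\mathbf{E}_1)\lesssim R^{n-2}$.

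That bound forces the tangential energy to vanish only when $n=1$. For $n=2$ it gives finite total tangential energy, and for $n\ge3$ it does not force vanishing at all. Passing from ``excess at the floor'' to ``excess zero'' needs a genuinely additional argument, and that is exactly where $n+2\le4$ or minimality must be used. The dimension restriction does not enter through error terms in the slicing, as you guess. The paper applies \cite[Theorem 1.4]{Philippis2024} in Section~\ref{Lipschitz_Section} for arbitrary $n$, and the restriction enters only through the rigidity Lemma~\ref{Rigidity}. Your proposal supplies no such argument. The remark about gluing a straight vortex sheet as a competitor for minimisers is not one: it only gives an upper energy bound, which cannot detect exact two-dimensionality. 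As a sanity check, your argument never actually uses either hypothesis. If it worked, it would prove rigidity for all critical points in every dimension and so settle Conjecture~\ref{Gibbon}, which is open.
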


\section{Fermi Coordinates and local geometry bounds}\label{Fermi}
In this section we look at solutions $U_{\e}=\begin{pmatrix}u_\e, A_\e\end{pmatrix}^T$ of \eqref{eqn_AbYMH} concentrating in the limit $\e \rightarrow 0$ around a given codimension $2$ submanifold $M \subset \mathbb R^{n+2}$. 

Let $\{\nu_1, \nu_2\}$ be a local orthonormal basis for $T^{\perp} M$. We describe a tubular neighbourhood of the $n$-dimensional submanifold $M$ by local Fermi coordinates
	\begin{align*}
	x=X(y, z)=\exp_y(z^\beta \nu_\beta(y)), \quad y=(y_1,\dots,y_n) \in M^n,\quad z=(z_1,z_2)\in\mathbb R^2,
	\end{align*}
with $|z|<\tau$ for some $\tau>0$. Then, the solution $U_{\e}(x)=\begin{pmatrix}u_{\e}(x), A_{\e}(x)\end{pmatrix}^T$ behaves as
	\begin{align}\label{2dSolution}
	u_{\e}(x) \approx f\left(\frac{z}{\e}\right) \frac{z}{|z|}, \quad A_{\e}(x) \approx a\left(\frac{z}{\e}\right) \frac{1}{|z|^2}(-z_2 d z^1+z_1 d z^2)
	\end{align}
	 up to gauge transformations in a measure theoretic sense as $\e\rightarrow0$ (c.f. Pigati-Stern \cite{Pigati2019}).
	 
In this way $\{\nu_1, \nu_2\}$ is a basis of normal bundle $T^{\perp} M$. In particular, given a basis $\{e_1, \ldots, e_n\}$ for $T M$ then $\{e_1, \ldots, e_n, \nu_1, \nu_2\}$ is a basis of $T N$ defined on $M$.
In what follows letters $i, j, k, \ldots$ are used for tangential coordinates to the manifold $M$, while $\alpha, \beta, \gamma, \ldots$ denote coordinates in the normal directions. We use $a, b, c \ldots$ to indicate all coordinates at once. More precisely,
	\begin{align*}
	1 \leq i, j, k, \ldots \leq n, \quad 1 \leq \alpha, \beta, \gamma, \ldots \leq 2, \quad 1 \leq a, b, c, \ldots \leq n+2.
	\end{align*}
We describe a tubular neighbourhood of $M$ in $N$ with coordinates defined by the exponential map,
	\begin{align*}
	x=X(y, z)=\exp_y(z^\beta \nu_\beta(y)), \quad(y, z) \in M \times B^2_\tau(0)
	\end{align*}
where $B^2_\tau(0) \subset \mathbb{R}^2$ is the ball of with sufficiently small radius $\tau$. 

Through our this paper, we denote by $B^n_r(0)$ the ball of radius $r$ in $\mathbb R^n$ and 
\begin{align*}
C^{n+2}_r(0)=:B^n_r(0)\times \mathbb R^2=\{x_1^2+\dots+x_n^2<r^2\}\subset\mathbb R^{n+2}
\end{align*} 
a cylindrical region in $\mathbb R^{n+2}$ of radius $r$.

We employ this choice of coordinates to construct the first local approximation $W_0$ to a solution of Problem \eqref{eqn_AbYMH}, expressed as:
	\begin{align*}
	 W_0(x) = U_0 (\frac{z-h(y)}{\e})
	\end{align*}
for some pair $h=(h^1, h^2)$ of functions defined on $M$, where $U_0 = \begin{pmatrix} u_0 \\ A_0 \end{pmatrix}$ as defined in \eqref{eqn_2DSolution}. Subsequently, we aim to estimate the error of approximation $S_\e(W_0)$, i.e. the Yang-Mills-Higgs operator given in \eqref{YMH_Operator}
	\begin{align*}
	 S_\e(u, A) = \begin{pmatrix} -\e^2 \Delta^A u - \frac{1}{2}(1 - |u|^2)u \\ \e^2 d^* d A - \langle \nabla^A u, \imath u \rangle \end{pmatrix}.
	\end{align*}
We express the operators in $S_\e$ in terms of the Fermi coordinates $(y, z)$, specifically the operators $-\Delta_N^A$ and $d_N^* d_N$. Following standard computations, the metric on $N$ can be expressed as:
	\begin{align}\label{eqn_MetricDecompositioninFermiCoordinates}
	 g_N(y, z) = \begin{pmatrix} g_{M_z}(y)&0 \\ 0&I \end{pmatrix}, \quad \text{ on } M \times B(0, \tau),
	\end{align}
where $g_{M_z}(y)$ represents the metric of $N$ restricted to the submanifold
	\begin{align}\label{eqn_ExponentialMap}
	 M_z = \left\{\exp_y\left(z^\beta \nu_\beta(y)\right): y \in M \right\}.
	\end{align}
This formulation aligns the operators with the chosen coordinates and facilitates further calculations, consistent with the framework discussed in \cite{Badran2024} and related works (e.g.,\cite{Philippis2024a,Liu2021}).

\subsection{Local Expansions for the rescaled solutions}\label{LocalExpansion}

In the exponential coordinates introduced above with $x=X(y,z)$, we have the following local expansion in terms of frames.
\begin{proposition}\label{MetricExpansionProposition}
Fix $p \in M$,  we can choose local coordinates $(y^1,\dots,y^n)$ on $M$ and an orthonormal
normal frame $\{\nu_1,\nu_2\}$ so that:
the local metric expansion in the associated Fermi coordinates $x=X(y,z)$ in a tubular neighbourhood of $M$ is given by
\begin{equation}
\label{eqn_MetricExpansion}
g_{ij}(p,z)
= \delta_{ij}-2 \mathcal A^j_{i,\alpha}(p)z^\alpha +\sum_{k=1}^n \mathcal A^k_{i,\alpha}\mathcal A^k_{j,\beta}z^\alpha z^\beta
+ O(|\mathcal A|^3|z|^3),
\end{equation}
where $\mathcal A$ is the second fundamental form of $M$.

In particular, if the second fundamental form satisfies
\begin{align}\label{SecondFundamentalForm_InitialBound}
|\mathcal A| \le C \e,
\end{align}
then for $|z| \le r$ sufficiently small we have the uniform bound
\begin{align*}
|g_{ij}(p,z) - \delta_{ij}|
\le C \varepsilon |z| + C \varepsilon^2 |z|^2
\le C(r)\, \varepsilon .
\end{align*}
\end{proposition}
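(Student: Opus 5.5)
We compute the metric directly from the parametrisation $X(y,z)=\exp_y(z^\beta\nu_\beta(y))$. The ambient space is $\mathbb R^{n+2}$ (flat), so the exponential map is affine:
\[
X(y,z)=Y(y)+z^\beta\nu_\beta(y),
\]
where $Y$ parametrises $M$. A general Riemannian $N$ would only add curvature corrections of order $|z|^2$, which we absorb in the same way. We carry out three steps.

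\textbf{Step 1: choice of coordinates and frame.} Near $p$ we choose normal coordinates $y$ on $M$, so that $\partial_iY(p)=e_i$ and $\langle\partial_iY,\partial_jY\rangle(p)=\delta_{ij}$. We take the normal frame $\{\nu_1,\nu_2\}$ parallel with respect to the normal connection at $p$, i.e. $(\nabla^\perp\nu_\beta)(p)=0$. This is always possible by parallel transporting radially in $y$.

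\textbf{Step 2: differentiate and expand.} Differentiating gives
\[
\partial_iX=\partial_iY+z^\alpha\partial_i\nu_\alpha .
\]
By the Weingarten equation, at $p$ we have
\[
\partial_i\nu_\alpha=-\mathcal A^k_{i,\alpha}e_k+(\nabla^\perp_i\nu_\alpha)=-\mathcal A^k_{i,\alpha}e_k ,
\]
where the normal part vanishes by our choice of frame. Hence $\partial_iX(p,z)=e_i-z^\alpha\mathcal A^k_{i,\alpha}e_k$, and therefore
\[
g_{ij}(p,z)=\delta_{ij}-z^\alpha\big(\mathcal A^j_{i,\alpha}+\mathcal A^i_{j,\alpha}\big)+z^\alpha z^\beta\mathcal A^k_{i,\alpha}\mathcal A^k_{j,\beta}.
\]
The second fundamental form is symmetric, $\mathcal A^j_{i,\alpha}=\mathcal A^i_{j,\alpha}$, so the linear term equals $-2\mathcal A^j_{i,\alpha}z^\alpha$. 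This gives \eqref{eqn_MetricExpansion}.

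In the flat case the expansion is in fact exact at $p$, and the $O(|\mathcal A|^3|z|^3)$ remainder is zero. In the curved case we instead expand $\exp$ via Jacobi fields. The remainder then collects the ambient curvature term $R_{i\alpha j\beta}z^\alpha z^\beta$ and the higher-order terms, bounded in terms of $|\mathcal A|$ and its derivatives.

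We also check the block structure \eqref{eqn_MetricDecompositioninFermiCoordinates}. Since $\partial_\alpha X=\nu_\alpha$ and $\langle\nu_\alpha,\partial_i\nu_\beta\rangle=0$ at $p$, the mixed terms vanish:
\[
\langle\partial_iX,\partial_\alpha X\rangle=\langle\partial_iY,\nu_\alpha\rangle+z^\beta\langle\partial_i\nu_\beta,\nu_\alpha\rangle=0 .
\]

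\textbf{Step 3: the uniform bound.} We substitute $|\mathcal A|\le C\e$ into the expansion. The linear term is at most $2|\mathcal A||z|\le C\e|z|$. The quadratic term is at most $|\mathcal A|^2|z|^2\le C\e^2|z|^2$. The remainder is at most $C\e^3|z|^3$, which is absorbed into the quadratic term for $|z|\le r$. Summing, $|g_{ij}(p,z)-\delta_{ij}|\le C(r)\e$ for $|z|\le r$ and $\e\le1$.

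\textbf{Main obstacle.} The expansion itself is elementary. The main subtlety is controlling the remainder uniformly in the non-flat case: this requires bounds on $\nabla\mathcal A$ and on the ambient curvature. In the Euclidean setting of Theorems \ref{main} and \ref{Curvature_Estimate} this issue does not arise.
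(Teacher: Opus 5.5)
Your proposal is correct and follows essentially the same route as the paper. Both use the flat parametrisation $X=Y+z^\alpha\nu_\alpha$, coordinates orthonormal at $p$ with vanishing normal connection at $p$, the Weingarten equation giving $\partial_iX(p,z)=e_i-z^\alpha\mathcal A^k_{i,\alpha}e_k$, and then expand the inner product using the symmetry of $\mathcal A$ before inserting $|\mathcal A|\le C\e$.

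Your observation that the remainder actually vanishes at $p$ in $\mathbb R^{n+2}$ is right. Your aside on a curved ambient $N$ is inaccurate: the term $R_{i\alpha j\beta}z^\alpha z^\beta$ is not controlled by $|\mathcal A|^3|z|^3$, nor by $|\mathcal A|$ and its derivatives. This does not affect the Euclidean setting the paper works in.
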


\begin{proof}
The existence of local coordinates so that the metric the normal connection satisfy
\begin{align*}
g_{ij}=&\delta_{ij},\qquad\forall i,j=1,\dots, n\\
\nabla_{\partial_{y_i}}\nu_\beta=&\omega^\alpha_{ i,\beta}\nu_\alpha=0,\qquad\forall i,j=1,\dots n,\quad \alpha,\beta=1,2.
\end{align*} 
at the point $p\in M$ is standard in Riemannian geometry. 

We compute the tangential metric components
\begin{align*}
g_{ij}(p,z)=\big\langle \partial_iX(p,z),\partial_jX(p,z)\big\rangle,
\end{align*}
where
\begin{align*}
\partial_iX =&\partial_i (Y+z^\alpha\nu_\alpha) \\
=&\partial_iY-z^\alpha \mathcal A^j_{i,\alpha}\partial_jY+ z^\alpha\omega^\beta_{\alpha i}\nu_\beta\\
=&\partial_iY-z^\alpha \mathcal A^j_{i,\alpha}\partial_jY.
\end{align*}
Thus
\begin{align*}
g_{ij}(p,z)
=& (g_M)_{ij}(p)
-2z^\alpha \mathcal A^k_{i,\alpha}g_{kj}(p) + z^\alpha z^\beta \mathcal A^k_{i,\alpha}\mathcal A^l_{j,\beta}g_{kl}(p) + O(|\mathcal A|^3|z|^3)\\
=& \delta_{ij}-2 \mathcal A^j_{i,\alpha}(p)z^\alpha +\sum_{k=1}^n \mathcal A^k_{i,\alpha}\mathcal A^k_{j,\beta}z^\alpha z^\beta
+ O(|\mathcal A|^3|z|^3),
\end{align*}
where the remainder comes from higher–order Taylor terms in the Fermi expansion.

Finally, if $|\mathcal A|\le C\varepsilon$ on $M$, then for $|z|\le r$ sufficiently small,
\begin{align*}
|g_{ij}(p,z)-\delta_{ij}|
\le C\varepsilon|z| + C\varepsilon^2|z|^2
\le C(r)\varepsilon,
\end{align*}
as claimed.
\end{proof}

\begin{theorem}
Denote by $\Omega^\perp=d\omega$ the normal curvature, where $\omega=\langle\nabla^\perp\nu_1,\nu_2\rangle$ is the normal
connection $1$--form, for an orthonormal normal frame $\{\nu_1,\nu_2\}$ in codimension~$2$. We can write
\begin{align*}
\Omega^\perp(e_i,e_j)=K^\perp(y)\,\varepsilon_{ij}.
\end{align*}
Let $A_0$ be the degree--one vortex connection on each normal fibre with Higgs field $u_0$.
Let
\begin{align*}
t(y,z):=z-h(y).
\end{align*}
Define
\begin{align*}
A(y,t)(e^\perp_\alpha)=A_0(t)(e^\perp_\alpha),\quad
A(y,t)(e_i)=a(|t|)\,\omega_i(y)-(\nabla_i h^\rho)(y)\,A_0(t)(e^\perp_\rho),\quad
u(y,t)=u_0(t).
\end{align*}
Then the curvature components satisfy
\begin{align*}
F_A(e^\perp_\alpha,e^\perp_\beta)&=F_{A_0}(e^\perp_\alpha,e^\perp_\beta),\\
F_A(e_i,e^\perp_\alpha)&=-(\nabla_i h^\rho)\,F_{A_0}(e^\perp_\rho,e^\perp_\alpha)+O(|t|),\\
F_A(e_i,e_j)&=(\nabla_i h^\rho)(\nabla_j h^\sigma)\,F_{A_0}(e^\perp_\rho,e^\perp_\sigma)+a(|t|)\,K^\perp(y)\,\varepsilon_{ij}-K^\perp(y)\,\varepsilon_{ij}\,\varepsilon_{\alpha\beta}\,z^\beta\,A(e^\perp_\alpha)+O(|t|).
\end{align*}
Moreover, the covariant derivatives of $u$ are
\begin{align*}
D_{A,e^\perp_\alpha}u=D_{A_0,e^\perp_\alpha}u_0,
\qquad
D_{A,e_i}u
=-(\nabla_i h^\rho)\,D_{A_0,e^\perp_\rho}u_0
-i\,a(|t|)\,\omega_i(y)\,u_0(t)
+O(|t|).
\end{align*}
In particular, on the core $t=0$, we have
\begin{align*}
F_A(e_i,e^\perp_\alpha)|_{t=0}&=-(\nabla_i h^\rho)\,F_{A_0}(e^\perp_\rho,e^\perp_\alpha),\\
F_A(e_i,e_j)|_{t=0}&=(\nabla_i h^\rho)(\nabla_j h^\sigma)\,F_{A_0}(e^\perp_\rho,e^\perp_\sigma)\\
\end{align*}
\end{theorem}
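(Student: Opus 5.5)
This is a direct computation of $F_A=dA$ and of $D_A u=du-\imath uA$ in a frame adapted to the Fermi coordinates $x=X(y,z)$. I work at a fixed $p\in M$, using the coordinates of Proposition~\ref{MetricExpansionProposition}, so that $g_{ij}(p,0)=\delta_{ij}$ and the metric is block diagonal by \eqref{eqn_MetricDecompositioninFermiCoordinates}.

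\textbf{The adapted frame.} I take the frame
\begin{align*}
e^\perp_\alpha=\partial_{z^\alpha},\qquad e_i=\partial_{y^i}-\omega_i(y)\,\varepsilon_{\alpha\beta}z^\beta\partial_{z^\alpha},
\end{align*}
which is horizontal with respect to the normal connection. Its brackets are
\begin{align*}
[e^\perp_\alpha,e^\perp_\beta]=0,\qquad [e_i,e^\perp_\alpha]=\omega_i\,\varepsilon_{\beta\alpha}e^\perp_\beta,\qquad [e_i,e_j]=-\Omega^\perp_{ij}\,\varepsilon_{\alpha\beta}z^\beta\partial_{z^\alpha}=-K^\perp\varepsilon_{ij}\,\varepsilon_{\alpha\beta}z^\beta e^\perp_\alpha .
\end{align*}
All curvature components then follow from
\begin{align*}
F_A(X,Y)=X(A(Y))-Y(A(X))-A([X,Y]).
\end{align*}
Under the change of variables $t=z-h(y)$ we have $e^\perp_\alpha=\partial_{t^\alpha}$, while
\begin{align*}
e_i=\partial_{y^i}|_t-(\nabla_ih^\rho)\partial_{t^\rho}-\omega_i\varepsilon_{\alpha\beta}z^\beta\partial_{t^\alpha}.
\end{align*}
The last term is what generates the rotational contribution.

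\textbf{Curvature components.} I compute the three types in turn.
\begin{itemize}
\item[(i)] The $(\alpha,\beta)$ component is immediate, since $A(e^\perp_\alpha)=A_0(t)(e^\perp_\alpha)$ depends only on $t$ and the $e^\perp$ commute.
\item[(ii)] For $F_A(e_i,e^\perp_\alpha)$, I expand $e_i(A_0(t)(e^\perp_\alpha))$ by the chain rule, giving $-(\nabla_ih^\rho)\partial_\rho A_{0,\alpha}$. I expand $e^\perp_\alpha(A(e_i))$, giving $a'(|t|)\omega_i t^\alpha/|t| - (\nabla_ih^\rho)\partial_\alpha A_{0,\rho}$. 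The difference of the $h$-terms is exactly $-(\nabla_ih^\rho)F_{A_0}(e^\perp_\rho,e^\perp_\alpha)$. The remaining terms carry a factor $\omega_i$; they are the $a'$-term, the rotational derivative and $A([e_i,e^\perp_\alpha])$. Since $\omega_i(p)=0$ and $a'(r)=O(r)$ near $0$, these are collected into $O(|t|)$, with the expansion taken about the core and $p$.
\item[(iii)] For $F_A(e_i,e_j)$, the $h$–$h$ cross terms combine as in (ii) to $(\nabla_ih^\rho)(\nabla_jh^\sigma)F_{A_0}(e^\perp_\rho,e^\perp_\sigma)$. Antisymmetrising $\partial_{y^i}(a\,\omega_j)$ gives $a\,d\omega=aK^\perp\varepsilon_{ij}$. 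The bracket term is $-A([e_i,e_j])=K^\perp\varepsilon_{ij}\varepsilon_{\alpha\beta}z^\beta A(e^\perp_\alpha)$, which enters with the sign displayed in the statement once the orientation convention for $\varepsilon$ is fixed. The second derivatives $\nabla^2h$ cancel by the symmetry of the Hessian in the $(i,j)$ antisymmetrisation, and everything else is again $O(|t|)$.
\end{itemize}

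\textbf{Higgs field.} For $u$, $D_{A,e^\perp_\alpha}u=\partial_\alpha u_0-\imath u_0A_0(e^\perp_\alpha)$ holds exactly. For $D_{A,e_i}u$, applying $e_i$ to $u_0(t)$ gives $-(\nabla_ih^\rho)\partial_\rho u_0$ plus a rotational term which is $O(|t|)$. Subtracting $\imath u_0A(e_i)$ then yields the stated formula.

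\textbf{Core values and main obstacle.} Setting $t=0$ kills the $O(|t|)$ remainders. It also kills $a(0)K^\perp=0$ and the $z^\beta A_0$ term, because $|A_0|$ is bounded near the core and we work at $z=h$ expanded to this order, so only the $h$-terms survive. The main obstacle is bookkeeping. One must check that every term involving $\omega_i$ or $z^\beta$, other than the two displayed $K^\perp$ terms, is genuinely $O(|t|)$. This needs the normalisation $\omega(p)=0$ from Proposition~\ref{MetricExpansionProposition} and the small-$r$ behaviour $a(r)=O(r^2)$, $f(r)=O(r)$ of the vortex profile. It also requires keeping the sign conventions for $\varepsilon_{\alpha\beta}$ and $\varepsilon_{ij}$ consistent.
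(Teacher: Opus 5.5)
Your route is essentially the paper's: Cartan's formula $F_A(X,Y)=X(A(Y))-Y(A(X))-A([X,Y])$, the chain rule through $t=z-h(y)$, antisymmetrisation producing $a\,d\omega$, and a bracket term. The paper simply asserts that the frame brackets are $O(|t|)$, and your explicit frame shows this is exactly where the argument breaks. The step that fails is collecting all $\omega_i$-terms into $O(|t|)$. Your rotation field is centred on $M$, not on the vortex core: with $z^\beta=t^\beta+h^\beta(y)$,
\[
-\omega_i\varepsilon_{\alpha\beta}z^\beta\partial_{t^\alpha}=-\omega_i\varepsilon_{\alpha\beta}t^\beta\partial_{t^\alpha}-\omega_i\varepsilon_{\alpha\beta}h^\beta\partial_{t^\alpha}.
\]
The first piece is harmless. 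On $A_0$ it cancels $-A([e_i,e^\perp_\alpha])$ by rotation invariance of $a\,d\theta$, and on $u_0$ it gives a multiple of $\omega_iu_0=O(|t|)$.

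The second piece is a translation, and it contributes:
\begin{itemize}
\item $-\omega_i\varepsilon_{\gamma\beta}h^\beta\partial_{t^\gamma}A_{0,\alpha}(t)$ to $F_A(e_i,e^\perp_\alpha)$;
\item $-\omega_i\varepsilon_{\gamma\beta}h^\beta\partial_{t^\gamma}u_0(t)$ to $D_{A,e_i}u$;
\item the $(i,j)$-antisymmetrisation of $\omega_i(\nabla_jh^\sigma)\varepsilon_{\gamma\beta}h^\beta\partial_{t^\gamma}A_{0,\sigma}(t)$ to $F_A(e_i,e_j)$.
\end{itemize}
These terms have size $|\omega(y)|\,|h(y)|$ and do not vanish on the core. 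Since $A_0=\tfrac14(-t^2dt^1+t^1dt^2)+O(|t|^3)$, your frame gives exactly
\[
F_A(e_i,e^\perp_\alpha)\big|_{t=0}=-(\nabla_ih^\rho)F_{A_0}(e^\perp_\rho,e^\perp_\alpha)-\tfrac14\,\omega_i h^\alpha\qquad(\varepsilon_{12}=1).
\]
The normalisation $\omega_i(p)=0$ removes these terms only on the fibre over $p$. For $y\neq p$ they are $O(|y-p|\,|h|)$, not $O(|t|)$. So ``collected into $O(|t|)$'' is false, and the exact core identities are established only at $y=p$. Relatedly, $z^\beta A(e^\perp_\alpha)$ vanishes at $t=0$ because $A_0(0)=0$ (indeed $|A_0(t)|=a(|t|)/|t|\le C|t|$), not because $A_0$ is bounded there.

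The cleanest repair is to use the holonomic Fermi fields $e_i=\partial_{y^i}$, $e^\perp_\alpha=\partial_{z^\alpha}$. Since $dt^\alpha=dz^\alpha-\partial_ih^\alpha\,dy^i$, the defining relations say precisely that $A=A_{0,\alpha}(t)\,dt^\alpha+a(|t|)\,\omega_i\,dy^i$. Hence $F_A$ is the pull-back of $F_{A_0}$ under $(y,z)\mapsto t$, plus $da\wedge\omega+a\,d\omega$. Evaluating on the coordinate fields gives:
\begin{itemize}
\item the fibre--fibre formula exactly;
\item the tangent--fibre formula with remainder $-a'(|t|)\frac{t^\alpha}{|t|}\omega_i$;
\item the tangent--tangent formula with remainder $-a'(|t|)\frac{t^\rho}{|t|}\bigl(\nabla_ih^\rho\,\omega_j-\nabla_jh^\rho\,\omega_i\bigr)$;
\item $D_{A,e_i}u=-(\nabla_ih^\rho)D_{A_0,e^\perp_\rho}u_0-\imath\,a\,\omega_iu_0$ with no error at all.
\end{itemize}
These remainders are $O(|t|)$ because $a'(r)=\frac r2(1-f^2)$, and they vanish at $t=0$. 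The displayed term $K^\perp\varepsilon_{ij}\varepsilon_{\alpha\beta}z^\beta A(e^\perp_\alpha)$ is itself $O(|z|\,|t|)$ and zero on the core, so the statement holds as written.

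If you prefer to keep your horizontal frame, you must carry an extra remainder $O(|\omega|\,|h|)$, which is $O(\varepsilon|h|)$ by Proposition~\ref{small-normal-connection}. You must also restrict the exact core identities to $y=p$. Note too that with $\omega_i=\langle\nabla_i\nu_1,\nu_2\rangle$ and $\varepsilon_{12}=1$, the true horizontal lift is $\partial_{y^i}+\omega_i\varepsilon_{\alpha\beta}z^\beta\partial_{z^\alpha}$. That lift is what produces the statement's sign for the bracket term, so the sign is not a free orientation convention.
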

\begin{proof}
We use that the structure group is Abelian, hence
\begin{align*}
F_A(X,Y)=dA(X,Y)=\nabla_X\!\big(A(Y)\big)-\nabla_Y\!\big(A(X)\big)-A([X,Y]).
\end{align*}
\medskip\noindent
\emph{Fibre--fibre.}
Since $A=A_0$ on each normal fibre and the normal Fermi frame is geodesic along the fibre,
we have
\begin{align*}
F_A(e^\perp_\alpha,e^\perp_\beta)=F_{A_0}(e^\perp_\alpha,e^\perp_\beta).
\end{align*}

\medskip\noindent
\emph{Tangent--fibre.}
The $y$--dependence of $A(e^\perp_\alpha)$ enters only through the centre shift
$t=z-h(y)$; differentiating in $y$ therefore gives, up to errors from the non-product
nature of the Fermi frame,
\begin{align*}
\nabla_{e_i}\!\big(A(y,t)(e^\perp_\alpha)\big)
=-(\nabla_i h^\rho)\,\nabla_{e^\perp_\rho}[A_0(t)(e^\perp_\alpha)].
\end{align*}
Using $F_{A_0}(y,t)(e^\perp_\rho,e^\perp_\alpha)
= e^\perp_\rho(A_0(e^\perp_\alpha)) - e^\perp_\alpha(A_0(e^\perp_\rho)) - A_0([e^\perp_\rho,e^\perp_\alpha])$
and that $[e_i,e^\perp_\alpha]=O(|t|)$ in Fermi coordinates, we obtain
\begin{align*}
F_A(y.t)(e_i,e^\perp_\alpha)=-(\nabla_i h^\rho)\,F_{A_0}(t)(e^\perp_\rho,e^\perp_\alpha)+O(|t|).
\end{align*}

\medskip\noindent
\emph{Tangent--tangent.}
Write
\begin{align*}
A(e_i)=A^{\mathrm{base}}(e_i)+A^{\mathrm{shift}}(e_i),
\end{align*}
where
\begin{align*}
A^{\mathrm{base}}(e_i):=a(|t|)\omega_i,\qquad
A^{\mathrm{shift}}(e_i):=-(\nabla_i h^\rho)\,A_0(e^\perp_\rho).
\end{align*}
Then
\begin{align*}
F_A(e_i,e_j)=dA^{\mathrm{base}}(e_i,e_j)+dA^{\mathrm{shift}}(e_i,e_j).
\end{align*}

For the base part, using $d(a\omega)=da\wedge\omega + a\,d\omega$,
\begin{align*}
dA^{\mathrm{base}}(e_i,e_j)=d(a\omega)(e_i,e_j)-A([e_i,e_j])\big|_{\mathrm{base}}.
\end{align*}
Since $a$ depends only on $t$ and the tangential Fermi fields have normal components of
size $O(|t|)$, we have $(da\wedge\omega)(e_i,e_j)=O(|t|)$. Hence
\begin{align*}
d(a\omega)(e_i,e_j)=a(|t|)\,d\omega(e_i,e_j)+O(|t|)
= a(|t|)\,\Omega^\perp(e_i,e_j)+O(|t|)
= a(|t|)\,K^\perp(y)\varepsilon_{ij}+O(|t|).
\end{align*}
Moreover, the leading normal component of the commutator of tangential Fermi fields is
governed by the normal curvature; in codimension $2$,
\begin{align*}
([e_i,e_j])^\perp
=K^\perp(y)\,\varepsilon_{ij}\,\varepsilon_{\alpha\beta}\,t^\beta\,e^\perp_\alpha+O(|t|)=O(|t|).
\end{align*}
So 
\begin{align*}
dA^{\mathrm{base}}(e_i,e_j)
= a(|t|)\,K^\perp(y)\varepsilon_{ij}+O(|t|).
\end{align*}

For the shift part, using the product rule and that $A_0$ depends on $(y,z)$ only through
$t=z-h(y)$, we have
\begin{align*}
\nabla_{e_i}\!\big(A_0(e^\perp_\rho)\big)=-(\nabla_i h^\sigma)\nabla_{\,e^\perp_\sigma}\!\big(A_0(e^\perp_\rho)\big)+O(|t|).
\end{align*}
Substituting into the shift term, we can directly compute
\begin{align*}
dA^{\mathrm{shift}}(e_i,e_j)
=(\nabla_i h^\rho)(\nabla_j h^\sigma)\,F_{A_0}(e^\perp_\rho,e^\perp_\sigma)+O(|t|).
\end{align*}
Therefore,
\begin{align*}
F_A(e_i,e_j)
=(\nabla_i h^\rho)(\nabla_j h^\sigma)\,F_{A_0}(e^\perp_\rho,e^\perp_\sigma)
+a(|t|)\,K^\perp(y)\varepsilon_{ij}
-K^\perp(y)\,\varepsilon_{ij}\,\varepsilon_{\alpha\beta}\,z^\beta\,A(e^\perp_\alpha)
+O(|t|).
\end{align*}

Since $u(y,z)=u_0(t)$ and $A(e^\perp_\alpha)=A_0(e^\perp_\alpha)$,
\begin{align*}
\nabla_{A,e^\perp_\alpha}u
=\nabla_{e^\perp_\alpha}(u_0(t))-iA(e^\perp_\alpha)u_0(t)
=\nabla_{A_0,e^\perp_\alpha}u_0.
\end{align*}
For tangential directions,
\begin{align*}
\nabla_{e_i}(u_0(t))=-(\nabla_i h^\rho)\,e^\perp_\rho(u_0(t))+O(|t|),
\end{align*}
and hence
\begin{align*}
\nabla_{A,e_i}u
&=\nabla_{e_i}(u_0(t))-iA(e_i)u_0(t)\\
&=-(\nabla_i h^\rho)\Big(\nabla_{e^\perp_\rho}(u_0(t))-iA_0(e^\perp_\rho)u_0(t)\Big)
-i\,a(|t|)\omega_i(y)u_0(t)+O(|t|)\\
&=-(\nabla_i h^\rho)\,\nabla_{A_0,e^\perp_\rho}u_0-i\,a(|t|)\omega_i(y)u_0(t)+O(|t|).
\end{align*}
\end{proof}

\subsection{Bounds on the geometry of nodal sets}\label{NodalSet_GeometricBounds}
In our context, we consider $M$ as nodal sets of solutions to the rescaled Abelian Yang--Mills--Higgs equation \eqref{eqn_AbYMH1}, with the background Euclidean metric on $\mathbb{R}^{n+2} $. The second fundamental form $(\Pi_M)_{ij}^\rho := \Pi_{ij}^\rho $ then satisfies
	\begin{align}\label{SecondFundamentalForm_M}
		|\Pi_{ij}^\rho|,\ |\nabla_k\Pi_{ij}^\rho|,\ |\nabla_{kl}\Pi_{ij}^\rho| \leq \mathcal{O}(\e),\quad i,j,k \in \{1,\dots,n\},\ \rho \in \{n+1,n+2\}.
	\end{align}
By Riccati-type equations $\partial_\eta\Pi^\rho_{ij} = - \sum_k\Pi^\eta_{ik}\Pi^\rho_{kj} $, we also have
	\begin{align}\label{SecondFundamentalForm_NormalDerivative}
		|\nabla_\eta\Pi_{ij}^\rho| \leq \mathcal{O}(\e^2),\quad i,j \in \{1,\dots,n\},\ \rho,\eta \in \{n+1,n+2\}.
	\end{align}
Thus \eqref{SecondFundamentalForm_InitialBound} holds, and by Proposition \ref{MetricExpansionProposition} the metrics of level sets in Fermi coordinates have the following expansion:
	\begin{align}\label{Metric_Difference}
		(g_{M_z})_{ij} = (g_M)_{ij} + 2 \sum_{\rho=1}^2\Pi_{i,\rho}^j z^\rho + \mathcal{O}(|\Pi|^2|z|^2) = (g_M)_{ij} + \mathcal{O}(\e).
	\end{align}

We also have the local expansion of the second fundamental form of $M_z $:

\begin{proposition}
	\begin{align}\label{LevelSetSecondFundamentalForm}
		(\Pi_{M_z})_{ij}^\rho = \Pi_{ij}^\rho + z_\eta \nabla_\rho \Pi_{ij}^\eta + \mathcal{O}(|\Pi|^2|z|^2) = \Pi_{ij}^\rho + \mathcal{O}(\e^2).
	\end{align}
\end{proposition}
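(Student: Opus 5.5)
The plan is to compute the second fundamental form of $M_z$ directly in the Fermi parametrisation, in the same way the metric expansion of Proposition \ref{MetricExpansionProposition} was obtained, and then insert the bounds \eqref{SecondFundamentalForm_M} and \eqref{SecondFundamentalForm_NormalDerivative}. Fix $p\in M$ and choose coordinates as in the proof of Proposition \ref{MetricExpansionProposition}: $g_{ij}(p)=\delta_{ij}$ and the normal connection vanishes at $p$. Since the ambient space is Euclidean, $X(y,z)=Y(y)+z^\alpha\nu_\alpha(y)$. For fixed $z$, the leaf $M_z$ is parametrised by $y\mapsto X(y,z)$, with tangent vectors
\begin{align*}
\partial_iX=\partial_iY-z^\alpha\Pi^k_{i,\alpha}\partial_kY+z^\alpha\omega^\beta_{i\alpha}\nu_\beta .
\end{align*}
At $p$ the last term vanishes. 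The normal space of $M_z$ at $X(p,z)$ is therefore spanned by $\nu_1(p),\nu_2(p)$ up to a correction of order $|z|^2|\Pi||\nabla\omega|$. By the Ricci equation, $\nabla\omega$ is quadratic in $\Pi$ in flat ambient space, so this correction is $\mathcal O(|\Pi|^2|z|^2)$.

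Next I would differentiate once more:
\begin{align*}
\partial_j\partial_iX=\partial_{ij}Y-z^\alpha\partial_j(\Pi^k_{i,\alpha}\partial_kY)+z^\alpha\partial_j(\omega^\beta_{i\alpha}\nu_\beta).
\end{align*}
Pairing this with $\nu_\rho$ gives three pieces.
\begin{enumerate}[i)]
\item The first term gives $\Pi^\rho_{ij}$.
\item The second term gives $-z^\alpha\Pi^k_{i,\alpha}\Pi^\rho_{kj}$ from the normal part of $\partial_j\partial_kY$.
\item The third term gives $z^\alpha\partial_j\omega^\rho_{i\alpha}$. At $p$ this is a curvature term, again quadratic in $\Pi$ by the Ricci equation.
\end{enumerate}
The linear-in-$z$ correction $-z^\alpha\Pi^k_{i\alpha}\Pi^\rho_{kj}$ is exactly the Riccati expression $z_\eta\nabla_\rho\Pi^\eta_{ij}$ quoted in the paper before \eqref{SecondFundamentalForm_NormalDerivative}. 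The indices need matching with care: use the symmetry $\Pi^\eta_{ik}\Pi^\rho_{kj}$ summed over $k$, and interpret $\nabla_\rho\Pi^\eta$ as the derivative of the second fundamental form of the parallel family in the normal direction.

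All remaining contributions come from the Taylor remainder in $z$ and from normalising the normal frame with the perturbed metric \eqref{Metric_Difference}. They are products of at least two factors of $\Pi$ or $\nabla\omega$ with $|z|^2$, which gives the $\mathcal O(|\Pi|^2|z|^2)$ term. The first equality in \eqref{LevelSetSecondFundamentalForm} then holds at $p$, and hence everywhere since $p$ was arbitrary.

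For the second equality, I would use \eqref{SecondFundamentalForm_NormalDerivative}, namely $|\nabla_\rho\Pi|\le\mathcal O(\e^2)$, and $|\Pi|^2\le\mathcal O(\e^2)$ from \eqref{SecondFundamentalForm_M}, with $|z|$ bounded on the tubular neighbourhood. The main obstacle is the bookkeeping of the frame. The normal frame of $M_z$ is not exactly $\{\nu_\alpha\}$ away from $p$, and the normal connection terms must be shown to contribute only at quadratic order. This is where the choice $\omega(p)=0$ and the Ricci equation in flat space are used. The Riccati identity itself is taken as given, as stated in the paper.
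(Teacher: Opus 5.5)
\textbf{There is a genuine gap where you dispose of the normal connection.} The paper's proof only cites \eqref{SecondFundamentalForm_M} and \eqref{SecondFundamentalForm_NormalDerivative}: it reads the first equality as a Taylor expansion in $z$ and tacitly assumes a suitable frame. Your explicit computation of $\langle\partial_j\partial_iX,\nu_\rho\rangle$ is in the same spirit, but it breaks at the claim that ``by the Ricci equation, $\nabla\omega$ is quadratic in $\Pi$''. The Ricci equation controls only the normal curvature $\Omega^\perp=d\omega$, i.e.\ the part of $\partial_j\omega_i$ antisymmetric in $(i,j)$. The symmetric part of $\partial_j\omega_i(p)$ is pure gauge and is not constrained by $\omega(p)=0$. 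It is also not harmless, because the leaves $M_z$ themselves depend on the frame.

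Here is a counterexample. Take $M=\mathbb R^n\times\{0\}$, so $\Pi\equiv0$. Identify the normal plane with $\mathbb C$ and let $\nu_1=e^{\imath\theta(y)}$, $\nu_2=\imath e^{\imath\theta(y)}$ with $\theta(p)=0$ and $d\theta(p)=0$. Then $\omega=d\theta$ vanishes at $p$. But $M_z$ is the graph of $y\mapsto e^{\imath\theta(y)}(z^1+\imath z^2)$, whose second fundamental form at $p$ is $\partial_{ij}\theta(p)\,\imath(z^1+\imath z^2)$. This is linear in $z$ and nonzero, and it is exactly your term iii), so that term is not quadratic in $\Pi$.

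Two smaller points on the same step. Your tilt estimate for the normal space is unnecessary at $p$: when $\omega(p)=0$, the normal space of $M_z$ at $X(p,z)$ is exactly $\operatorname{span}\{\nu_1(p),\nu_2(p)\}$. And your term ii), $-z^\alpha\sum_k\Pi^k_{i,\alpha}\Pi^\rho_{kj}$, is not symmetric in $(i,j)$ on its own. So it cannot be ``exactly'' the Riccati term; its antisymmetric part has to be cancelled by the curvature part of iii).

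\textbf{The missing ingredient is a normalisation of the frame beyond $\omega(p)=0$.} Take $\{\nu_1,\nu_2\}$ parallel along radial geodesics of $M$ from $p$, so that $y^i\omega_i(y)=0$ in normal coordinates. Then $\omega_i(y)=\int_0^1 t\,y^j\Omega^\perp_{ji}(ty)\,dt$, hence $\partial_j\omega_i(p)=\tfrac12\Omega^\perp_{ji}(p)$. This is genuinely quadratic in $\Pi$ by Ricci, and it combines with ii) into the symmetric term $-\tfrac12 z^\alpha\sum_k(\Pi^\alpha_{ik}\Pi^\rho_{kj}+\Pi^\rho_{ik}\Pi^\alpha_{kj})$.

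\textbf{Your closing ``hence everywhere since $p$ was arbitrary'' also fails as written.} The Fermi coordinates, and therefore the leaves $M_z$, are built from one fixed frame, so you cannot renormalise it at every base point. At $y\neq p$ you pick up further terms linear in $z$: $z^\alpha\partial_j\omega^\rho_{i\alpha}$, $-z^\alpha\omega^\beta_{i\alpha}\omega^\beta_{j\rho}$, and the contribution from the tilt $z^\alpha\omega^\beta_{i\alpha}\nu_\beta$ of the tangent planes of $M_z$. None of these is of the form $\mathcal O(|\Pi|^2|z|^2)$. They are $\mathcal O(\e^2)$ provided $|\omega|=\mathcal O(\e)$ and $|\nabla\omega|=\mathcal O(\e^2)$ on the ball. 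The radial gauge gives this from $|\Omega^\perp|,|\nabla\Omega^\perp|=\mathcal O(\e^2)$; Proposition \ref{small-normal-connection}, which only gives $|\omega|=\mathcal O(\e)$, does not. With this frame the second equality in \eqref{LevelSetSecondFundamentalForm}, which is the one used later, follows. The first equality should be read with these frame terms absorbed into the remainder.
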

\begin{proof}
This follows directly from \eqref{SecondFundamentalForm_M}, \eqref{SecondFundamentalForm_NormalDerivative}.
\end{proof}
When the second fundamental form is small, it is a standard procedure in Riemannian geometry that we can choose appropriate normal frame $\nu_1,\nu_2$ so that the normal connection is small.
\begin{proposition}\label{small-normal-connection}
When $|\mathcal A|=O(\e) $, there exists an oriented orthonormal normal frame $(\nu_1,\nu_2)$ on $B_r(p)$ such that the
normal connection $1$--form $\omega_i(y)=\langle \nabla_{\partial_{y_i}}\nu_1,\nu_2\rangle$
satisfies
\begin{align*}
\omega_i(p)=0\quad\text{for all }i,
\qquad\text{and}\qquad
\sup_{B_r(p)}|\omega|=O(\e) .
\end{align*}
\end{proposition}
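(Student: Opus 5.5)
We construct the frame by parallel transport in the normal bundle along radial geodesics of $M$ emanating from $p$, and then bound the resulting connection form using the curvature of the normal bundle, which is controlled by $\mathcal A$ via the Ricci equation. Start with an arbitrary oriented orthonormal basis $(\nu_1(p),\nu_2(p))$ of $T^\perp_pM$. For $y\in B_r(p)\subset M$, with $r$ below the injectivity radius (which is bounded below since $|\mathcal A|=O(\e)$ and the ambient space is flat), let $\gamma_y(s)=\exp^M_p(s\,v)$, $s\in[0,1]$, be the radial geodesic from $p$ to $y$. Define $\nu_\alpha(y)$ as the $\nabla^\perp$-parallel transport of $\nu_\alpha(p)$ along $\gamma_y$. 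Parallel transport in the normal bundle is an isometry and preserves orientation, so $(\nu_1,\nu_2)$ is a smooth oriented orthonormal normal frame on $B_r(p)$.

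\textbf{Vanishing at $p$ and the radial gauge.} By construction $\nabla^\perp_{\partial_s}\nu_\alpha=0$ along every radial geodesic, so $\omega(\partial_s)=0$ along each ray, i.e. $\omega$ is in radial gauge. Since the radial directions exhaust $T_pM$, we get $\omega_i(p)=0$ for all $i$. This is the standard construction that also appears in the proof of Proposition \ref{MetricExpansionProposition}.

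\textbf{Bounding $\omega$ away from $p$.} In codimension two the normal curvature is $\Omega^\perp=d\omega$. By the Ricci equation in flat $\mathbb R^{n+2}$,
\begin{align*}
\Omega^\perp(e_i,e_j)=\sum_k\big(\mathcal A^k_{i,1}\mathcal A^k_{j,2}-\mathcal A^k_{j,1}\mathcal A^k_{i,2}\big),
\end{align*}
so $|d\omega|\le C|\mathcal A|^2=O(\e^2)$. Write $y=\exp^M_p(x)$ in normal coordinates. In radial gauge, $x^i\omega_i=0$ gives the Poincar\'e-type formula
\begin{align*}
\omega_j(x)=\int_0^1 s\,x^i\,(d\omega)_{ij}(sx)\,ds,
\end{align*}
up to Jacobian factors of the normal coordinates, which are $1+O(\e^2)$ because the intrinsic curvature of $M$ is $O(|\mathcal A|^2)$ by Gauss. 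This formula follows from $\partial_s(s\,\omega_j(sx))=s\,x^i(d\omega)_{ij}(sx)$, which holds precisely because $\omega(\partial_s)=0$. Hence $\sup_{B_r(p)}|\omega|\le C r\,\e^2$, which is in particular $O(\e)$. The only bound needed from the ambient setting is \eqref{SecondFundamentalForm_M}. If $r$ is allowed to be of order $1/\e$, as in the rescaled picture, the same estimate gives $Cr\e^2=O(\e)$ as well.

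\textbf{Main obstacle.} The delicate point is justifying the radial-gauge identity and uniform control of the exponential map of $M$ on $B_r(p)$. This needs a lower bound on the injectivity radius and bounds on the Jacobian of $\exp^M_p$, both of which follow from the curvature bound $|\mathrm{Rm}_M|\le C|\mathcal A|^2=O(\e^2)$ by Rauch comparison. If that becomes cumbersome, the alternative is to work in graphical coordinates over $T_pM$ (available since $M$ is a Lipschitz graph with small second fundamental form) and replace the geodesic rays by straight rays $s\mapsto(sx,\,f(sx))$. The radial-gauge computation goes through verbatim, because it only uses $\omega(\partial_s)=0$ along the chosen rays.
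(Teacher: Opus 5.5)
Your proof is correct. The paper does not prove this proposition. It only remarks that such a frame can be chosen by ``a standard procedure in Riemannian geometry'', and earlier, in the proof of Proposition~\ref{MetricExpansionProposition}, it asserts that $\nabla^\perp\nu_\beta=0$ at $p$ can be arranged. So there is no competing argument to compare with. Your construction makes that standard procedure explicit:
\begin{itemize}
\item normal parallel transport along rays from $p$ gives the radial gauge $x^i\omega_i(x)=0$;
\item the Ricci equation in flat space gives $|d\omega|\le C|\mathcal A|^2=O(\e^2)$;
\item the radial homotopy formula $\omega_j(x)=\int_0^1 s\,x^i(d\omega)_{ij}(sx)\,ds$ then controls $\omega$ itself.
\end{itemize}
It also proves more than is stated, namely $\sup_{B_r(p)}|\omega|\le Cr\e^2$. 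For the fixed radius $r$ used in the paper this is $O(\e^2)$, sharper than the claimed $O(\e)$. Gauge fixing is genuinely needed here, because only $d\omega$, not $\omega$, is controlled by $\mathcal A$.

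Two minor remarks.
\begin{itemize}
\item The homotopy formula is an exact identity for coordinate components, so no ``Jacobian factors'' enter there. The metric appears only when you convert the components $\omega_j$ into the norm $|\omega|_g$. That comparison is uniform: in normal coordinates $g_{ij}=\delta_{ij}+O(\e^2|x|^2)$, and in graphical coordinates it follows from the bounded slope.
\item In the paper the nodal set is a Lipschitz graph over a ball, not a complete manifold. Your graphical-ray variant is therefore the cleaner one to write down. It avoids any discussion of injectivity radius or of the domain of $\exp_p$ near the boundary, and it uses only the identity $x^i\omega_i(x)=0$ along the chosen rays.
\end{itemize}
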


\subsection{Operators in Fermi-coordinates}\label{Operators_Fermi_Coordinate_Expansion}
in the following, we use expressions for the gauged derivatives and gauged Laplace operators (whose deductions can be found in \cite[Section 5]{Badran2023}). Given any $1$-form $\omega=\omega_a d x^a \in \Omega^1(\mathbb R^{n+2})$ the covariant Laplacian $-\Delta^\omega$ is given by
	\begin{align*}
	-\Delta^\omega \phi=-\frac{1}{\sqrt{\operatorname{det} g}} \partial_a^\omega\left(\sqrt{\operatorname{det} g} g^{a b} \partial_b^\omega \phi\right)
	\end{align*}
where $\partial_a^\omega=\partial_a-\imath \omega_a$. Here $g$ is the ambient Euclidean metric on $\mathbb R^{n+2}$. From now on, we will denote the metric $g_{M_z}$ on $M_z$ by $g_z$ and the metric $g_M$ will be denoted by $g_0$. Using this convention and defining $\omega_{a b}=\partial_a \omega_b-\partial_b \omega_a$ the operator $d^* d$ is given by
	\begin{align*}
	d^* d \omega=-\frac{1}{\sqrt{\operatorname{det} g_z}} g_{a b} \partial_c\left(\sqrt{\operatorname{det} g_z} g^{d c} g^{e b} \omega_{d c}\right) d x^a.
	\end{align*}
In Fermi coordinates, we have
	\begin{align*}
	\quad g^{j \alpha}=0, \quad g^{\alpha \beta}=\delta^{\alpha \beta}.
	\end{align*}
We define
	\begin{align}\label{Notations_Fermi}
	\begin{split}
	a^{i j}_z&=g^{i j}|_{M_z} \\
	(b_s^{i k})_z&=\frac{1}{\sqrt{\operatorname{det} g}} g_{s t} \partial_j\left(g^{i j} g^{k t} \sqrt{\operatorname{det} g}\right)|_{M_z} \\
	c^k_z&=\frac{1}{\sqrt{\operatorname{det} g}} \partial_j\left(\sqrt{\operatorname{det} g} g^{j k}\right)|_{M_z} \\
	(d_j^{\beta k})_z&=\frac{1}{\sqrt{\operatorname{det} g}} g_{i j} \partial_\beta\left(g^{i k} \sqrt{\operatorname{det} g}\right)|_{M_z} \\
	H_z^\beta&=-\frac{1}{\sqrt{\operatorname{det} g}} \partial_\beta(\sqrt{\operatorname{det} g})|_{M_z}
	\end{split}
	\end{align}
and observe that $\sqrt{\operatorname{det} g}=\sqrt{\operatorname{det} g_z}$. Now we consider local coordinates on $M$ describing a neighbourhood of a generic $p \in M$, given by
	\begin{align*}
	Y_p: B(0, \rho)^n  \rightarrow M, \quad  \xi \mapsto Y_p(\xi)
	\end{align*}
for some $\rho>0$. Denoting $\partial_{a b}^\omega=\partial_a^\omega \partial_b^\omega$, we obtain the following local expressions
	\begin{align*}
	-\Delta^\omega \phi&=-a^{i j} \partial_{i j}^\omega \phi-c^j \partial_j^\omega \phi-\partial_{\alpha \alpha}^\omega \phi+H_z^\alpha \partial_\alpha^\omega \phi \\
	d^* d \omega&=-a^{i j} \partial_j \omega_{i k} d \xi^k-b_k^{i j} \omega_{i j} d \xi^k-\partial_\beta \omega_{\beta \gamma} d z^\gamma+H_z^\beta \omega_{\beta \gamma} d z^\gamma \\
	&-a^{i j} \partial_j \omega_{i \gamma} d z^\gamma-c^i \omega_{i \gamma} d z^\gamma-\partial_\beta \omega_{\beta k} d \xi^k-d_k^{\beta j} \omega_{\beta j} d \xi^k
	\end{align*}
where all coefficients are evaluated at $(\xi, z)$, leaving implicit the composition with $Y_p$. 
In Euclidean space, we have
	\begin{align*}
	H_z^\beta(\xi)&=H^\beta(\xi)+ z^\gamma\left(\lambda_{l , \beta}(\xi) \lambda_{l , \gamma}(\xi)\right) +z^\delta z^\gamma\left(\lambda_{l , \beta}(\xi) \lambda_{l , \gamma}(\xi) \lambda_{l , \delta}(\xi)\right)+O\left(|z|^3\right).
	\end{align*}
Here, $\lambda_{l , \gamma}$ is the $l$-th principal curvature in the direction of $\nu_\gamma$, precisely the $l$-th eigenvalue of the tensor $A_\gamma$ defined by
	\begin{align*}
	A_\gamma(v, w)=-g\left(\nabla_v \nu_\gamma, w\right), \quad v, w \in T M
	\end{align*}
see for instance [2, Appendix A.2]. We write the Taylor expansions in $z$ of $a^{ij}$ and $c^j$ as follows
	\begin{align*}
	a^{i j}(\xi, z)&=a_0^{i j}(\xi)+z^\beta a_{1, \beta}^{i j}(\xi, z), \\
	c^j(\xi, z)&=c_0^j(\xi)+z^\beta c_{1, \beta}^j(\xi, z)
	\end{align*}
and obtain
	\begin{align*}
	a^{i j} \partial_{i j}+c^j \partial_j&=\Delta_M+\e\left(t^\beta+h^\beta\right)\left(a_{1, \beta}^{i j} \partial_{i j}+c_{1, \beta}^j \partial_j\right) \\
	&=: \Delta_M+\e\left(t^\beta+h^\beta\right) D_{1, \beta}.
	\end{align*}
At this point we change coordinates to $X_h$. In what follows we denote $h_j^\beta=\partial_j h^\beta, h_{i j}^\beta=\partial_{i j} h^\beta$ and so on. Changing coordinates yields, for a pair $(\phi, \omega)$ locally represented as
	\begin{align*}
	\phi = \phi(\xi, t), \quad \omega = \omega_k(\xi, t) d\xi^k + \omega_\alpha(\xi, t) dt^\alpha,
	\end{align*}
the expression:
	\begin{align*}
	\e a^{jk} \big[ \partial_k h^\beta \partial_{j\beta} \phi + \partial_j h^\gamma \partial_{\gamma k}^\omega \phi \big]
- \e^2 a^{jk} \partial_j h^\gamma \partial_k h^\beta \partial_{\gamma \beta} \phi.
	\end{align*}
Here, we stress that $\Delta_{M_z} = a^{ij}(\xi, z) \partial_{ij} + c^j(\xi, z) \partial_j$ is a differential operator acting only on the variables $\xi$, and recall that $\partial_j^\omega = \partial_j - \imath\omega_j$.
It holds:
	\begin{align*}
	\e^2 d^* d \omega&=
-a^{ij} \Big( \e^2 \partial_j \omega_{ik} - \e \partial_j h^\beta \partial_\beta \omega_{ik}
- \e \partial_i h^\beta \partial_j \omega_{\beta k}
- \e^2 \partial_{ji} h^\gamma \omega_{\gamma k} \\
	&- \e^2 \partial_{jk} h^\beta \omega_{i\beta}
+ \e^3 \partial_i h^\gamma \partial_k h^\beta \omega_{\gamma \beta}
+ \e^3 \partial_i h^\beta \partial_j h^\gamma \partial_\gamma \omega_{\beta k} \Big) d\xi^k \\
	&- \e b_k^{ij} \Big( \e^2 \omega_{ij}
- \e \partial_i h^\beta \omega_{\beta j}
- \e \partial_j h^\beta \omega_{i \beta}
+ \e^2 \partial_i h^\gamma \partial_j h^\beta \omega_{\gamma \beta} \Big) d\xi^k \\
	&- \partial_\beta \omega_{\beta k} dt^k
- \e d_k^{\beta j} \Big( \e \omega_{\beta j}
- \e \partial_j h^\gamma \omega_{\beta \gamma} \Big) d\xi^k
+ \e^2 \partial_k h^\gamma H_z^\beta \omega_{\beta \gamma} d\xi^k \\
	&- \e^2 \partial_k h^\gamma c^i \Big( \omega_{i \gamma}
- \e \partial_i h^\beta \omega_{\beta \gamma} \Big) d\xi^k
- \partial_\beta \omega_{\beta \gamma} dt^\gamma
+ \e H_z^\beta \omega_{\beta \gamma} dt^\gamma \\
	&- \e c^i \Big( \omega_{i \gamma}
- \e \partial_i h^\beta \omega_{\beta \gamma} \Big) dt^\gamma \\
	&- a^{ij} \Big( \e \partial_j \omega_{i \gamma}
- \e \partial_j h^\beta \partial_\beta \omega_{i \gamma}
- \e^2 \partial_{ij} h^\beta \omega_{\beta \gamma}
- \e \partial_i h^\beta \partial_j \omega_{\beta \gamma}
+ \e^2 \partial_i h^\gamma \partial_j h^\beta \partial_\gamma \omega_{\beta \gamma} \Big) dt^\gamma.
	\end{align*}
This expanded form accounts for corrections due to the coordinate transformation and highlights how each term is affected by the geometry encoded in the derivatives of $h^\beta $, such as $\partial_j h^\beta $, $\partial_{ij} h^\beta $, and the curvature term $H_z^\beta $.

\section{Approximate solutions}\label{ApproximateSolutions}
Note that the nodal sets of $u$ are invariant under gauge transformations. We assume in this section that the following conditions hold for the nodal sets of $u_\e$ for a solution $\begin{pmatrix}u_\e, A_\e\end{pmatrix}^T$ to the equation \eqref{eqn_AbYMH}:
\begin{mdframed}\label{Lipschitz_Condition}
(\textbf{L}) - The nodal sets $M_\e$ of $u_\e$ Lipschitz graphs over the codimension $2$ plane $\{z=(x_{n+1},x_{n+2})=0\}$ in the cylindrical region $C^{n+2}_2(0)=:B^n_2(0)\times \mathbb R^2=\{x_1^2+\dots+x_n^2<4\}\subset\mathbb R^{n+2}$ and that $M_\e$ converges in Lipschitz norm to the plane $\{z=(x_{n+1},x_{n+2})=0\}$ as $\e\rightarrow0$.
\end{mdframed}

We will construct an approximate solution in this section and use it to show in the subsequent Section \ref{Improved_Regularity} that the uniform $C^2$ bounds (uniform bounds on the second fundamental form) in condition \textbf{(L)} implies uniform $C^{2,\alpha}$ bounds for the nodal sets. Indeed, we will see from  Section \ref{Lipschitz_Section} that condition \textbf{(L)} holds for ambient dimension $n+2\leq 4$.

To get the improved regularity of the nodal sets, we need to construct an optimal approximate solution based on the initial $C^2$ bounds on the nodal sets. In the following, we denote by $\hat U(x)=\hat U_\e(x)=\begin{pmatrix}\hat u_\e(x), \hat A_\e(x)\end{pmatrix}^T=\begin{pmatrix} u_\e(\e x), \e A_\e(\e x)\end{pmatrix}^T$ a rescaling of $U_\e=\begin{pmatrix}u_\e, A_\e\end{pmatrix}^T$ so that $\hat U_\e$ satisfies the YMH equation with $\e=1$ (see \eqref{eqn_AbYMH1}) in $C_2^{n+2}(0)$. We denote by $\hat M_\e$ the nodal set of $\hat u_\e$. We will also omit the subscript and use $\hat U$ in place of $\hat U_\e$ and $\hat M$ in place of $\hat M_\e$ if there is no confusion in the context. 
\begin{remark}
Under the condition \textbf{(L)}, we can assume (see Section \ref{CurvatureBound_Section}), by a point picking argument and up to a choice of new sequence which we still denote by $\hat u_\e$, so that the nodal sets $M_\e$ have uniformly bounded second fundamental form, and thus the rescaled nodal sets $\hat M_\e$ have second fundamental form bounded by $O(\e)$, and thus the bounds in Subsection \ref{NodalSet_GeometricBounds} hold.
\end{remark}
Using the Fermi coordinates $x=(y,z)$ near $\hat M\cap C_2^{n+2}(0)$, we define an approximate solution
	\begin{align}\label{Approximate_Solution_Def}
	\tilde U_0(x)=\tilde U_0(z-h(y)),
	\end{align}
where $\tilde U_0 = \begin{pmatrix}\tilde u_0, \tilde A_0\end{pmatrix}^T$ is defined in the following lemma.
\begin{lemma}[cf. (9.2) in \cite{Wang2019a}]\label{CutoffLemma}
Let $U_0(x)= \begin{pmatrix} u_0, A_0\end{pmatrix}^T $ be the two dimensional solution defined in \eqref{2dSolution} and let
	\begin{align*}
	\Psi(x)=\begin{pmatrix}\frac{x}{|x|}\\ \\\frac{d\left(\frac{x}{|x|}\right)}{\imath\frac{x}{|x|}}\end{pmatrix}
	\end{align*}
 be a pure gauge solution (cf. \cite[(2.4)]{Badran2023}).

We can choose a cutoff function $\zeta$ so that $\tilde U_0=\begin{pmatrix}\tilde u_0,\tilde A_0\end{pmatrix}^T := \zeta(x)U_0(x)+(1-\zeta(x))\Psi(x)$ almost satisfies the Abelian Yang--Mills--Higgs equation in dimension two, i.e.
	\begin{align*}
	\begin{pmatrix}-\Delta^{\tilde A_0}\tilde u_0\\ d^\star d \tilde A_0\end{pmatrix}=\begin{pmatrix}\frac{1}{2}(1-|\tilde u_0|^2)\tilde u_0\\ \langle\nabla^{\tilde A_0}\tilde u_0,i\tilde u_0\rangle\end{pmatrix}+\begin{pmatrix}v\\b\end{pmatrix},
	\end{align*}
with
	\begin{align*}
	\|v\|_{C^2(\mathbb C)}+\|b\|_{C^2(\mathbb C,T^\star\mathbb C)}=O(\e^3).
	\end{align*}
\end{lemma}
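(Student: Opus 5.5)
The plan is to take the cutoff $\zeta$ to be a radial function $\zeta(x)=\chi(|x|/R_\e)$. Here $\chi\equiv 1$ on $[0,1]$ and $\chi\equiv0$ on $[2,\infty)$, and the radius is $R_\e = K|\log\e|$ with $K$ a large fixed constant (say $K\ge 4$). In the inner region $|x|\le R_\e$ we have $\tilde U_0=U_0$, which solves the equation exactly, so $v=b=0$ there. In the outer region $|x|\ge 2R_\e$ we have $\tilde U_0=\Psi$. This is a pure gauge, $\Psi=\mathrm G_\theta(1,0)$ with $\theta=\arg x$: it has $|\tilde u_0|=1$, $\nabla^{\tilde A_0}\tilde u_0=0$ and $d\tilde A_0=d(d\theta)=0$ away from the origin. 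So both equations hold exactly there too, and again $v=b=0$. Everything reduces to the annulus $R_\e\le|x|\le 2R_\e$.

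On the annulus, first write $U_0$ and $\Psi$ in the same gauge. Indeed $U_0=(f e^{\imath\theta}, a\,d\theta)$ and $\Psi=(e^{\imath\theta},d\theta)$. So $\tilde u_0=(\zeta f+1-\zeta)e^{\imath\theta}$ and $\tilde A_0=(\zeta a+1-\zeta)\,d\theta$, and the interpolation is just an interpolation of the radial profiles. Set $\tilde f=1-\zeta(1-f)$ and $\tilde a=1-\zeta(1-a)$. The error is then obtained by substituting $(\tilde f,\tilde a)$ into the ODE system \eqref{eqn_SecondOrderODE2D}, which gives
\begin{align*}
v&=\Big[-\tilde f''-\tfrac{\tilde f'}{r}+\tfrac{(1-\tilde a)^2\tilde f}{r^2}-\tfrac12\tilde f(1-\tilde f^2)\Big]e^{\imath\theta},\\
b&=\Big[-\tilde a''+\tfrac{\tilde a'}{r}-\tilde f^2(1-\tilde a)\Big]d\theta .
\end{align*}
Each bracket is a polynomial expression in $\zeta,\zeta',\zeta''$ and in $1-f,1-a$ together with their derivatives. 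It vanishes when $\zeta\equiv1$ (exact solution) and when $f=a=1$ (pure gauge). So every term carries at least one factor of $1-f$, $1-a$, or one of their derivatives.

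For the estimates, I will use the exponential decay $f-1=O(e^{-r})$ and $a-1=O(e^{-r})$, upgraded to the same decay for derivatives up to order $4$. This upgrade comes from the linearised ODE at infinity: $1-f$ and $1-a$ satisfy Bessel-type equations with mass $1$, and decay of derivatives follows by standard ODE or elliptic estimates. Meanwhile $|\partial^k\zeta|\le C R_\e^{-k}$, and the factors $r^{-1}$ and $r^{-2}$ are bounded by $R_\e^{-1}$ on the annulus. Hence
\[
|\partial^k v|+|\partial^k b|\le C e^{-R_\e}=C\e^{K}\qquad (k\le2).
\]
Converting the Cartesian derivatives of $e^{\imath\theta}$ and $d\theta$ costs only powers of $R_\e^{-1}$. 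Taking $K\ge3$ gives the $O(\e^3)$ bound in $C^2(\mathbb C)$. Higher norms work the same way if needed later.

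The main obstacle is justifying the derivative decay of $1-f$ and $1-a$; the cited references only state the pointwise decay. My plan for this step is as follows. Since the pair $(1-f,1-a)$ solves the first-order Bogomolny system, I will take the bounds $0<f<1$ and $0<a<1$ as given. I will then write $f'=(1-a)f/r$ and $a'=\tfrac r2(1-f^2)$, and read off $|f'|\le Ce^{-r}$ and $|a'|\le Cre^{-r}$ directly. Differentiating the system repeatedly then bounds all higher derivatives by $C r^k e^{-r}$. The polynomial loss $r^k$ is absorbed by taking $K$ slightly larger, for instance $K=4$.
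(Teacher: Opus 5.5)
Your proposal is correct and follows essentially the same route as the paper. Both interpolate between $U_0$ and the pure gauge $\Psi$ with a radial cutoff at scale $\sim|\log\e|$, so that the error lives only on an annulus where $U_0-\Psi$ is exponentially small. Your version fills in what the paper leaves implicit: you reduce to the radial profiles $(\tilde f,\tilde a)$, derive derivative decay from the first-order Bogomolny system, and take $K\ge 4$ instead of the paper's $3|\log\e|$, so that polynomial factors such as $|a'|\le C r e^{-r}$ are genuinely absorbed into the $O(\e^3)$ bound.
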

\begin{proof}
We choose the cutoff function so that $\zeta(x)\equiv1$ in $B^2_{3|\log\e|}(0)$ with $\spt\zeta\subset B^2_{6|\log\e|}(0)$ and $|\zeta^{(n)}|\leq1,\forall n\in\mathbb N$.

Notice that $\left|U_0-\Psi\right|=O(e^{-|x|})$ as $|x|\rightarrow\infty$ (cf. \cite[Section 1]{Badran2023}), we easily see that
	\begin{align*}
	\left\|\begin{pmatrix}v\\b\end{pmatrix}\right\|_{C^2(\mathcal C)}=O(e^{-3|\log\e|})=O(\e^3).
	\end{align*}

\end{proof}
\subsection{Equation for the perturbation}

We denote by the difference
	\begin{align}\label{Difference}
	\Phi=\hat U-\tilde U_0=\begin{pmatrix}\phi\\\omega\end{pmatrix},
	\end{align}
and compute  the following  equation satisfied by $\Phi$ in  Fermi coordinates $x=(y,z)$:
	\begin{align*}
	\Delta^{\hat A_\e(y,z)}\phi(y,z)&=\Delta^{\hat A_\e(y,z)}\hat u_\e(y,z)-\Delta^{\hat A_\e(y,z)}\tilde u_0(z-h(y))\\
	&=\Delta^{\hat A_\e(y,z)}\hat u_\e(y,z)-\Delta^{\tilde A_0(z-h(y))}\tilde u_0(z-h(y))+(\Delta^{\tilde A_0(z-h(y))}-\Delta^{\hat A_\e})[\tilde u_0(z-h(y))]\\
	\end{align*}
and
	\begin{align*}
	d^\star d\omega(y,z)&=d^\star d \hat A_\e(y,z)-d^\star d \tilde A_0(z-h(y)).
	\end{align*}

Using the expansions in \ref{Operators_Fermi_Coordinate_Expansion}, we can write $\tilde U_0$ and the operators in the Fermi coordinate.
	\begin{align}\label{eqn_MainEllipticEquationforphi}
	\begin{split}
	-\Delta^{\tilde A_0(z-h(y))}\phi(y,z)&=-\Delta^{\hat A_\e(y,z)}\phi(y,z)+\left(\Delta^{\hat A_\e(y,z)} -\Delta^{\tilde A_0(z-h(y))}  \right) \left[\hat u_\e(y,z)-\tilde u_0(z-h(y))\right]  \\
	&=\Delta^{\hat A_\e(y,z)}\hat u_\e(y,z)-\Delta^{\tilde A_0(z-h(y))}\tilde u_0(z-h(y))+(\Delta^{\tilde A_0(z-h(y))}-\Delta^{\hat A_\e})[\hat u_\e(y,z)]\\
	&=-\left[(\Delta_{M_z}h^\beta)(y)\right]\left[(\nabla^{\tilde A_0(z-h(y))}_\beta \tilde u_0)(z-h(y))\right]-H^\beta_z(y)\left[(\nabla^{\tilde A_0(z-h(y))}_\beta \tilde u_0)(z-h(y))\right]\\
	&+a^{ij}(y,z)h^\beta_i(y)h^\gamma_j(y)\nabla^{\tilde A_0(z-h(y))}_{\beta\gamma}\tilde u_0(z-h(y))\\
	&-\frac{1}{2}(1-|\tilde u_0(z-h(y))|^2)\tilde u_0(z-h(y))-v(z-h(y))+\frac{1}{2}(1-|\hat u_\e(y,z)|^2)\hat u_\e(y,z)\\
	&+(\Delta^{\tilde A_0(z-h(y))}-\Delta^{\hat A_\e})[\hat u_\e(y,z)],\\
	d^\star d\omega(y,z) 	&=\langle(\nabla^{\hat A_\e(y,z)}\hat u_\e)(y,z),\imath\hat u_\e(y,z)\rangle\\
	&-\langle(\nabla^{\tilde A_0(z-h(y))}\tilde u_0)(z-h(y)),\imath \tilde u_0(z-h(y))\rangle-b(z-h(y))\\
	&-(\Delta_{M_z}h^\beta)(y)\tilde A_{0,\alpha\beta}(z-h(y))dz^\alpha-H^\beta_z(y)\tilde A_{0,\alpha\beta}(z-h(y))dz^\alpha\\
	&+a^{ij}(y,z)h^\beta_i(y)h^\gamma_j(y)\partial_\gamma \tilde A_{0,\beta\alpha}(z-h(y))dz^\alpha\\
	&+\left(a^{ij}(y,z)h^\gamma_jh^\beta_{jk}(y)\tilde A_{0,\gamma\beta}(z-h(y))+b^{ij}_k(y,z)h^\gamma_i(y)h^\beta_j(y)\tilde A_{0,\gamma\beta}(z-h(y))\right)dy^k\\
	&-\left(d^{\beta j}_k(y)h^\gamma_j(y)\tilde A_{0,\beta\gamma}(z-h(y))+h^\gamma_k(y)H^\beta_z(y)\tilde A_{0,\beta\gamma}(z-h(y))\right)dy^k\\
	&-\left(h^\gamma_k(y)c^j(y)h^\beta_i(y)\tilde A_{0,\beta\gamma}(z-h(y))\right)dy^k
	\end{split}.
	\end{align}
where
	\begin{align*}
	\nabla^{\tilde A_0}_{\beta\gamma}\tilde u_0 =(\partial_\beta-\imath a\partial_\beta\theta)(\partial_\gamma-\imath a\partial_\gamma\theta) (\tilde fe^{i\theta}) =& e^{i\theta}\left[\tilde f''\partial_\beta\theta\partial_\gamma\theta+\tilde f'\partial_\beta\partial_\gamma\theta\right]\\
	\tilde A_{0,\alpha\beta}=&\partial_\alpha \tilde A_{0,\beta}-\partial_\beta \tilde A_{0,\alpha}\\
	h_j^\beta=&\partial_jh^\beta\\
	h_{ij}^\beta=&\partial_{ij}h^\beta,
	\end{align*}
and $\begin{pmatrix}\tilde f,\tilde A_0\end{pmatrix}^T$ with $\tilde A_0=\tilde A_{0,\alpha}dz^\alpha$ is the approximate $2$-d solution with cutoff and $\e=1$.

The coefficients $a_z^{ij}, b^{ij}_k, c^j,d^{\beta j}_k$ and the mean curvature $H^\beta_z$ are geometric quantities of the $z$ - level sets $M_z$ in Fermi coordinates and are defined in the appendix \eqref{Notations_Fermi}. We have the following property and bounds for these quantities when the second fundamental form of $M$ are bounded as in Subsection \ref{NodalSet_GeometricBounds}.
\begin{proposition}\label{Fermi_bounds}
Suppose the second fundamental form of $M$ satisfies $|\mathcal A|=O(\e)$. By an appropriate choice of local coordinates on the submanifold $M$ near a neighbourhood $B_r(y_0,0)$ of a given point $(y_0,0)\in M$, we can make
	\begin{align}
	\begin{split}
		a_0^{ij}(y_0,0)&=\delta_{ij},\\
	c^k(y_0,0)&=0,\\
	\|d^{\beta j}_k\|_{C^{0,.\alpha}B_r(y_0,0)}&=O(\e).
	\end{split}
	\end{align}
\end{proposition}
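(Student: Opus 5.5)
The plan is to choose coordinates on $M$ adapted to the point $y_0$ so that each of the three conditions reduces to either a first-order normal-coordinate identity or a direct consequence of the metric expansion in Proposition \ref{MetricExpansionProposition}.

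\textbf{Choice of coordinates.} Take $y$ to be Riemannian normal coordinates on $(M,g_M)$ centred at $y_0$. Combine these with the oriented normal frame $(\nu_1,\nu_2)$ of Proposition \ref{small-normal-connection}, which satisfies $\omega_i(y_0)=0$ and $\sup_{B_r}|\omega|=O(\e)$, to form the Fermi chart $X(y,z)$. Normal coordinates give
\begin{align*}
(g_M)_{ij}(y_0)=\delta_{ij},\qquad \partial_k (g_M)_{ij}(y_0)=0.
\end{align*}
Since $a_0^{ij}=g^{ij}|_{z=0}=(g_M)^{ij}$, the first identity follows immediately.

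\textbf{The coefficient $c^k$.} At $z=0$ we have $\sqrt{\det g}=\sqrt{\det g_M}$, so
\begin{align*}
c^k(y,0)=\frac{1}{\sqrt{\det g_M}}\,\partial_j\!\left(\sqrt{\det g_M}\,g_M^{jk}\right).
\end{align*}
This is a combination of first derivatives of $g_M$ and therefore vanishes at $y_0$. Equivalently, $c^k(y,0)=-g_M^{ij}\Gamma^k_{ij}$, the contracted Christoffel symbols, which vanish at the centre of normal coordinates.

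\textbf{The coefficient $d^{\beta j}_k$.} Here $\partial_\beta$ is the normal derivative in Fermi coordinates. From \eqref{eqn_MetricExpansion} and \eqref{Metric_Difference},
\begin{align*}
\partial_\beta g_{ij}=-2\mathcal A^j_{i,\beta}+O(|\mathcal A|^2|z|),\qquad \partial_\beta\sqrt{\det g}=-H^\beta_z\sqrt{\det g}.
\end{align*}
Expanding $d^{\beta j}_k=\frac{1}{\sqrt{\det g}}g_{ij}\partial_\beta(g^{ik}\sqrt{\det g})$ then expresses it as a smooth polynomial in $g$, $g^{-1}$, $\mathcal A$ and $H_z$. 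In particular $d^{\beta j}_k=2\mathcal A^k_{j,\beta}-H^\beta\delta^k_j+O(\e^2|z|)$. By \eqref{SecondFundamentalForm_M}, \eqref{SecondFundamentalForm_NormalDerivative} and \eqref{LevelSetSecondFundamentalForm}, this quantity and its first derivatives in $(y,z)$ are $O(\e)$ on $B_r(y_0,0)$, because $|\mathcal A|,|\nabla\mathcal A|=O(\e)$ and $|\nabla_\eta\mathcal A|=O(\e^2)$. The $C^{0,\alpha}$ norm is bounded by the $C^1$ norm, which gives the third bound.

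\textbf{Main obstacle.} The delicate point is uniformity. The normal coordinates and the Fermi chart must be defined on a ball of radius $r$ independent of $\e$, with $g_M$ and its derivatives uniformly controlled there. This follows because the curvature of $M$ is $O(\e^2)$ by Gauss's equation and $|\mathcal A|=O(\e)$, so the injectivity radius and focal radius are $\gtrsim \e^{-1}\gg r$. Consequently the metric in normal coordinates is $\delta+O(\e^2|y|^2)$ in $C^2$, and the derivatives of $g$ entering $d^{\beta j}_k$ stay bounded. I expect this bookkeeping, rather than any of the three identities, to need the most care.
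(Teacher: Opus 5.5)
Your proposal is correct and follows essentially the same route as the paper. Normal coordinates on $M$ centred at $y_0$ give the first two identities, which the paper simply cites as a standard fact from Petersen; for the third, you make explicit the paper's one-line observation that every term of $d^{\beta j}_k$ carries a factor of the second fundamental form, via $d^{\beta j}_k=2\mathcal A^k_{j,\beta}-H^\beta_z\delta^k_j+O(\e^2|z|)$ bounded using \eqref{SecondFundamentalForm_M} and \eqref{SecondFundamentalForm_NormalDerivative}, and your remarks on the normal frame and on the uniformity of the charts fill in details the paper leaves implicit.
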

\begin{proof}
The first two identities are simple exercises in Riemannian geometry (see for example \cite[Chapter 2, Section 8, (6)]{Petersen1998}). For the third one, these coefficients in local coordinates have a factor of the second fundamental form of $M$ for every term, and the bounds follow from \eqref{SecondFundamentalForm_M} and \eqref{SecondFundamentalForm_NormalDerivative}.
\end{proof}
	\subsection{Orthogonality Condition for the Perturbation}\label{Orthogonality_Condition}
	
	To get an optimal perturbation, we assume the following orthogonality conditions hold (c.f. \cite{Wang2019a})
	\begin{align}\label{Orthogonality_one}
	\int_{\mathbb C}\begin{pmatrix}\phi(y,z)\\\omega(y,z)\end{pmatrix}\cdot\vee_1(z-h(y))&=0,\quad\forall y\in B^n_r(y_0)\\
	\label{Orthogonality_two}
	\int_{\mathbb C}\begin{pmatrix}\phi(y,z)\\\omega(y,z)\end{pmatrix}\cdot\vee_2(z-h(y))&=0,\quad\forall y\in B^n_r(y_0)\\
	\label{Orthogonality_three}
	(d^\star \omega)(y,z)+\langle\phi(y,z),i\tilde u_0(z-h(y))\rangle&=0,\quad\forall (y,z)\in B^{n+2}_r(y_0,0),
	\end{align}
where $\{\vee_1,\vee_2\}$ is the translational zero modes defined in \eqref{Zero_Mode}. Here \eqref{Orthogonality_three} is a restatement of \eqref{eqn_GaugedPerturbations} that the perturbation is orthogonal to the gauge invariance, with $\e=1$ in this case. 

We will prove first that these orthogonality conditions hold with the right choice of $h$ and a gauge transformation.

\begin{proposition}
Under our hypothesis \textbf{(L)}, we can assume for small enough $\e$ that $\Phi=\hat U_\e-\tilde U_0$ is sufficiently small in $C^{1,\alpha}(C_r)$. For $h\in C^{1,\alpha}(B_r^n(y_0);\mathbb{R}^2)$ and $\gamma\in C^{2,\alpha}(C_r;\mathbb{R})$, denote by 
\begin{align*}
\Phi_\gamma(y,z):=G_\gamma(\hat U)(y,z)-\tilde U_0(z-h(y))
\end{align*}
the $\gamma$ - gauge transformed perturbation, where 
\begin{align*}
G_\gamma(\hat U)=G_\gamma(\hat u,\hat A):=(\hat ue^{i\gamma},\,\hat A+d\gamma)^T.
\end{align*}
Then there exists $h\in C^{1,\alpha}(B_r^n(y_0);\mathbb{R}^2), \gamma\in C^{2,\alpha}_0(C_r)$ so that the orthogonality conditions \eqref{Orthogonality_one}, \eqref{Orthogonality_two} and \eqref{Orthogonality_three} hold simultaneously.
\end{proposition}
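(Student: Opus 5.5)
We will prove the proposition by the implicit function theorem applied to the map
\begin{align*}
\mathcal F(h,\gamma;\hat U):=\Big(\int_{\mathbb C}\Phi_\gamma(y,z)\cdot\vee_1(z-h(y))\,dz,\ \int_{\mathbb C}\Phi_\gamma(y,z)\cdot\vee_2(z-h(y))\,dz,\ d^\star(\hat A+d\gamma-\tilde A_0(z-h))+\langle \hat u e^{\imath\gamma}-\tilde u_0,\imath\tilde u_0\rangle\Big),
\end{align*}
viewed as a map from $C^{1,\alpha}(B^n_r;\mathbb R^2)\times C^{2,\alpha}_0(C_r)$ into $C^{1,\alpha}(B_r^n;\mathbb R^2)\times C^{0,\alpha}(C_r)$. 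The reference point is $\hat U=\tilde U_0(z-h_0(y))$ with $h_0$ the graph function of the nodal set $\hat M$ (given by \textbf{(L)} in Fermi coordinates $h_0\equiv 0$), $\gamma=0$, where $\mathcal F=0$ (up to the $O(\e^3)$ cutoff error of Lemma \ref{CutoffLemma}). By hypothesis \textbf{(L)} and the smallness of $\Phi$ in $C^{1,\alpha}$, $\mathcal F(0,0;\hat U)$ is small, so it suffices to show the linearisation $D_{(h,\gamma)}\mathcal F$ at the reference point is an isomorphism with bounds uniform in $\e$, and that $\mathcal F$ is $C^1$ with uniformly controlled modulus.

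The key step is computing the linearisation. Differentiating in $h$ gives $\delta h^\beta\,\partial_\beta\tilde U_0=\delta h^\beta\vee_\beta$ (up to gauge), so the first two components become the $2\times2$ Gram matrix $\big(\int_{\mathbb C}\vee_\alpha\cdot\vee_\beta\big)\delta h$, which is $c_0\,\mathrm{Id}$ with $c_0=\int(f'^2+a'^2/r^2)>0$ by the radial symmetry of \eqref{Zero_Mode}; the $\gamma$-derivative in these components is $\int\Theta_{\tilde U_0}[\delta\gamma]\cdot\vee_\beta$, which vanishes up to small errors since $\vee_\beta$ are (Coulomb-gauged) translational modes orthogonal to gauge modes, i.e.\ $\Theta^\star\vee_\beta=0$. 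Differentiating the third component in $\gamma$ yields $d^\star d\delta\gamma+|\tilde u_0|^2\delta\gamma=(-\Delta+|\tilde u_0|^2)\delta\gamma$, i.e.\ $\Theta^\star_{1,\tilde U_0}\Theta_{\tilde U_0}$, while its $h$-derivative is $\Theta^\star\vee_\beta\,\delta h^\beta$, again negligible. Thus the linearisation is block lower-triangular up to $o(1)$ perturbations.

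The main obstacle is the invertibility of $-\Delta+|\tilde u_0|^2$ on $C^{2,\alpha}_0(C_r)\to C^{0,\alpha}(C_r)$ uniformly in $\e$: the potential $|\tilde u_0|^2$ vanishes on the core $z=h(y)$, so it is not uniformly coercive there. We would handle it by noting the operator is $-\Delta+V$ with $V\ge 0$, hence invertible with Dirichlet data by the maximum principle, and with $V\ge c>0$ outside a bounded neighbourhood of the core; Schauder estimates plus a barrier argument (using that $V\approx1$ away from a tube of fixed width in rescaled coordinates and that $C_r$ has bounded cross-section in $z$ after restricting to the tubular region) give uniform $C^{2,\alpha}$ bounds. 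Having the uniform inverse, we check the $C^1$ dependence of $\mathcal F$ (composition with $z-h(y)$ loses one derivative, which is why $h$ is only sought in $C^{1,\alpha}$ while the integrals in $z$ absorb derivatives falling on $\vee_\beta$), and conclude by the quantitative inverse function theorem (a contraction mapping in a small ball) that there exist $h,\gamma$ near $(h_0,0)$ solving $\mathcal F=0$, which is exactly \eqref{Orthogonality_one}–\eqref{Orthogonality_three}.
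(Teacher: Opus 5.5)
Your proposal follows essentially the same route as the paper. Both apply the implicit function theorem in $(h,\gamma)$, with the Gram matrix $\big(\int\vee_\alpha\cdot\vee_\beta\big)$ and $d^\star d+|\tilde u_0|^2$ as the diagonal blocks, and with the gauge-to-translation coupling $\int\Theta[\delta\gamma]\cdot\vee_\beta$ small because $\Theta^\star\vee_\beta=0$; your treatment of the uniform H\"older invertibility of $-\Delta+|\tilde u_0|^2$ (barrier plus Schauder) is more careful than the paper's bare appeal to Lax--Milgram.

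One slip, which does not affect the conclusion: the $h$-derivative of the gauge component is not negligible. Up to cutoff errors, $\partial_\beta\tilde U_0=\vee_\beta+\Theta_{\tilde U_0}[\tilde A_0(\partial_\beta)]$, and $\Theta^\star$ of this is $|\tilde u_0|^2\partial_\beta\theta=O(1)$. This is harmless, because block lower-triangularity with invertible diagonal blocks, which you do state, is all the argument needs.
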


\begin{proof}
The orthogonality conditions are equivalent to finding $(h,\gamma)$ such that 
\begin{align*}
F(h,\gamma):=&\left(\Theta^*_{1,\tilde U_0}\!\big(\Phi_{\gamma}\big),\;\int_{\mathbb{R}^2}\Phi_{\gamma}(y,z)\cdot \vee_1(z-h(y))\,dz,\;\int_{\mathbb{R}^2}\Phi_{\gamma}(y,z)\cdot \vee_2(z-h(y))\,dz\right)\\
=&\quad\mathbf{0},
\end{align*}
where the first component gives gauge orthogonality and the 2nd and 3rd components give translational orthogonality. 

Its Fr\'echet derivative is then
\begin{align*}
DF(0,0)
=
\begin{pmatrix}
\partial_{\xi}F_{\mathrm{gauge}} & \partial_{\eta}F_{\mathrm{gauge}}\\[4pt]
\partial_{\xi}F_{\mathrm{trans}} & \partial_{\eta}F_{\mathrm{trans}}
\end{pmatrix}.
\end{align*}

Let $h_s=s\eta$, $\gamma_s=s\xi$ and we compute the variation of the perturbation 
\begin{align*}
\frac{d}{ds}\Big|_{0}\Phi_{\gamma} = (i\hat u\,\xi,\,d\xi)+\eta^\alpha(y)\,\partial_\alpha \widetilde U_0(z).
\end{align*}
Substituting in, we thus get the linearisation of the map $F$ at $(0,0)$ as follows.
\begin{align*}
(DF(0,0)(\eta,\xi))_{\mathrm{gauge}}
=& (d^{*} d + \langle \tilde u_0, \hat{u} \rangle)\,\xi + \eta^{\alpha}\Big( \langle i \tilde u_0, \partial_{\alpha}\hat{u}_0 \rangle + d^{*}(\partial_{\alpha}\tilde{A}_0) \Big) - \eta^{\alpha}\langle i \partial_{\alpha} \tilde u_0,\, \hat u - \tilde u_0 \rangle \\
(DF(0,0)(\eta,\xi))_{\mathrm{trans},\beta}
=& \int_{\mathbb{R}^2} \left\langle i\hat {u}(y,z)\,\xi(y,z),\, d\xi(y,z) \right\rangle \cdot \vee_\beta(z)\, dz \\
&+ \eta^\alpha(y) \int_{\mathbb{R}^2}
\partial_\alpha \tilde U_0(z)\cdot \vee_\beta(z)\, dz - \eta^\alpha(y) \int \Phi(y,z)\cdot \partial_\alpha \vee_\beta(z)\, dz,
\end{align*}
for $\beta=1,2$.

The $\xi$ - gauge block (upper left block) is
\begin{align*}
(d^{*} d + \langle \tilde u_0, \hat{u} \rangle)\,\xi = (d^{*} d + |\tilde u_0|)^2\,\xi +O(|\Phi|)\xi.
\end{align*}
Notice $d^\star d+|u_0|^2$ is a positive operator and the existence and uniqueness of solutions follows by Lax--Milgram. So this block is uniformly invertible as $\Psi$ is assumed to have small $C^{1,\alpha}$ norm.

The $\eta$ - translation block (lower right block)
\begin{align*}
&\eta^\alpha(y) \int_{\mathbb{R}^2}
\partial_\alpha \tilde U_0(z)\cdot \vee_\beta(z)\, dz - \eta^\alpha(y) \int \Phi(y,z)\cdot \partial_\alpha \vee_\beta(z)\, dz\\
=& \eta^\alpha(y) \int_{\mathbb{R}^2}
\vee_\alpha(z)\cdot \vee_\beta(z)\, dz + O(\e^2) - \eta^\alpha(y) \int \Phi(y,z)\cdot \partial_\alpha \vee_\beta(z)\, dz
\end{align*}
is also uniformly invertible for small $\e$ as $\Psi$ is small in $C^{1,\alpha}$ and that $\vee_\alpha, \alpha=1,2$ form an orthogonal basis for the translational kernel. 

The $\xi$ - translation (lower left block) is
\begin{align*}
&\int_{\mathbb{R}^2} \left\langle \imath\hat {u}(y,z)\,\xi(y,z),\, d\xi(y,z) \right\rangle \cdot \vee_\beta(z)\, dz\\
=&\int_{\mathbb{R}^2} \left\langle \imath\tilde {u}_0(y,z)\,\xi(y,z),\, d\xi(y,z) \right\rangle \cdot \vee_\beta(z)\, dz-\int_{\mathbb{R}^2} \left\langle \imath\phi(y,z)\,\xi(y,z),\, d\xi(y,z) \right\rangle \cdot \vee_\beta(z)\, dz.
\end{align*}
Its first term is small because the gauge - zero mode $ \left\langle i {u}_0(y,z)\,\xi(y,z),\, d\xi(y,z) \right\rangle$ is orthogonal to the translational kernel $\vee_\beta$ and that $\tilde u_0$ and $u_0$ are close. its second term is small is again by smallness of  $\Psi$  in $C^{1,\alpha}$ norm.

So by the above analysis, the block matrix $DF(0,0)$ is uniformly invertible near the origin and there exists $(h,\gamma)$ giving us $F(h,\gamma)=0$ by the Banach space implicit function theorem.

\end{proof}

With the choice of $\gamma$ and $h$, we use $G_\gamma(\hat U)=(\hat ue^{i\gamma},\,\hat A+d\gamma)^T$ in place of $\hat U$ it follows that the three orthogonality conditions \eqref{Orthogonality_one}, \eqref{Orthogonality_two} and \eqref{Orthogonality_three} hold.

\eqref{Orthogonality_one} and \eqref{Orthogonality_two} also read as
	\begin{align}\label{Orthogonality_one'}
	\int_{\mathbb C}\left[\phi^1(y,z)f'(|z-h(y)|)+\left\langle\omega(y,z),\frac{a'(|z-h(y)|)}{|z-h(y)|}dz^2\right\rangle\right]dz&=0,\\\label{Orthogonality_two'}
	\int_{\mathbb C}\left[-\phi^2(y,z)f'(|z-h(y)|)+\left\langle\omega(y,z),-\frac{a'(|z-h(y)|)}{|z-h(y)|}dz^1\right\rangle\right]dz&=0,
	\end{align}
$\forall y\in B^n_r(y_0)$, where $\phi=\phi^1+\imath\phi^2$.

Differentiating \eqref{Orthogonality_one'} and \eqref{Orthogonality_two'} with respect to $y$ variables, we get $\forall y\in B^n_r(y_0)$ that
	\begin{align}\label{Orthogonality1}
	\begin{split}
	&\int_{\mathbb C}\left[\partial_j\phi^1(y,z)f'(|z-h(y)|)\right]dz+\int_{\mathbb C} \left\langle\nabla_j(\omega(y,z)),\frac{a'(|z-h(y)|)}{|z-h(y)|}dz^2\right\rangle dz\\
	&=\int_{\mathbb C}\phi^1(y,z)f''(|z-h(y)|)\left(\sum_\beta\frac{z^\beta-h^\beta(y)}{|z-h(y)|} \partial_jh^\beta(y)\right)dz\\
	&+\int_{\mathbb C}\left\langle\omega(y,z),\left[\left(\frac{a'(\mathbf{r})}{\mathbf{r}}\right)'(z-h(y))\right]dz^2\right\rangle\left(\sum_\beta\frac{z^\beta-h^\beta(y)}{|z-h(y)|} \partial_jh^\beta(y)\right) dz\\
	&=\mathcal R_{11},\\
	&\int_{\mathbb C}\left[\partial_j\phi^2(y,z)f'(|z-h(y)|)\right]dz+\int_{\mathbb C} \left\langle\nabla_j(\omega(y,z)),\frac{a'(|z-h(y)|)}{|z-h(y)|}dz^1\right\rangle dz\\
	&=\int_{\mathbb C}\phi^2(y,z)f''(|z-h(y)|) \partial_jh(y)\left(\sum_\beta\frac{z^\beta-h^\beta(y)}{|z-h(y)|} \partial_jh^\beta(y)\right)dz\\
	&+\int_{\mathbb C}\left\langle\omega(y,z),\left[\left(\frac{a'(\mathbf{r})}{\mathbf{r}}\right)'(z-h(y))\right]dz^1\right\rangle \left(\sum_\beta\frac{z^\beta-h^\beta(y)}{|z-h(y)|} \partial_jh^\beta(y)\right) dz\\
	&=\mathcal R_{12},
	\end{split}
	\end{align}
	where $\left(\frac{a'(\mathbf{r})}{\mathbf{r}}\right)'(z-h(y))=\frac{a''(\mathbf{r})\mathbf{r}-a'(\mathbf{r})}{\mathbf{r}^2}(z-h(y))=\frac{a''(|z-h(y)|)|z-h(y)|-a'(|z-h(y)|)}{|z-h(y)|^2}$, with the notation that $\mathbf{r}(z)=|z|$.
	
	Here we observe
	\begin{align*}
	\|\mathcal R_{11}\|_{C^{0,\alpha}(B^{n+2}_r(y_0,0))},\|\mathcal R_{12}\|_{C^{0,\alpha}(B^{n+2}_r(y_0,0))}=O\left(\left\|\begin{pmatrix}\phi\\\omega\end{pmatrix}\right\|_{C^{2,\alpha}(B_2(y_0,0))}^2\right)
	\end{align*}
 by Proposition \ref{phi_bounds_h}.

Similarly, differentiating the above identities again with respect to the $y$ variables we get
 	\begin{align}\label{Orthogonality2}
	 \begin{split}
&\int_{\mathbb C}\partial_j\partial_k\phi^1(y,z)f'(|z-h(y)|)dz+\int_{\mathbb C} \left\langle\nabla_j\nabla_k(\omega(y,z)),\frac{a'(|z-h(y)|)}{|z-h(y)|}dz^2\right\rangle dz\\
	&=\mathcal R_{21},\\
	&\int_{\mathbb C}\partial_j\partial_k\phi^2(y,z)f'(|z-h(y)|)dz+\int_{\mathbb C} \left\langle\nabla_j\nabla_k(\omega(y,z)),\frac{a'(|z-h(y)|)}{|z-h(y)|}dz^2\right\rangle dz\\
	&=\mathcal R_{22},\\
 \end{split}
	\end{align}
	$\forall y\in B^n_r(y_0)$. And the RHS satisfies again $\|\mathcal R_{21}\|_{C^{0,\alpha}(B^{n+2}_r(y_0,0))},\|\mathcal R_{22}\|_{C^{0,\alpha}(B^{n+2}_r(y_0,0))}=O\left(\left\|\begin{pmatrix}\phi\\\omega\end{pmatrix}\right\|_{C^{2,\alpha}(B_2(y_0,0))}^2\right)$.

By construction, we have the following
\begin{proposition}[cf. Lemma 9.6 of \cite{Wang2019a}]\label{phi_bounds_h}
For any $(y_0,0)\in M$ in Fermi coordinates and $k=0,1,2$, we obtain
	\begin{align*}
	\|h\|_{C^{k,\alpha} (B_r(y_0))}\leq O(|\phi\|_{C^{k,\alpha}(B^{n+2}_r(y_0,0))}).
	\end{align*}
\end{proposition}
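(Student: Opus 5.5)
The plan is to read the estimate off the definition of $h$. The function $h$ is chosen so that the translational orthogonality conditions \eqref{Orthogonality_one'}--\eqref{Orthogonality_two'} hold and so that $M_\e$, the nodal set of $\hat u$, sits at $z=h(y)$ up to a small correction. So $h$ should be controlled by whatever part of $\Phi$ fails to vanish on the zero set.

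\textbf{Step 1: the case $k=0$.} At a point $y$, write $\hat u(y,z)=\tilde u_0(z-h(y))+\phi(y,z)$. The Fermi coordinates are adapted to $\hat M$, so $\hat u(y,0)=0$. This gives
\begin{align*}
\tilde u_0(-h(y))=-\phi(y,0).
\end{align*}
Near the origin, $\tilde u_0=u_0$ and $u_0(t)=f(|t|)\,t/|t|$ with $f'(0)>0$. In particular $u_0$ is a local diffeomorphism at $0$ with invertible differential $f'(0)\,\mathrm{Id}$. Since $\phi$ is small, the inverse function theorem gives $|h(y)|\le C|\phi(y,0)|$, and hence $\|h\|_{C^0}\le C\|\phi\|_{C^0}$.

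An alternative route to the same bound uses \eqref{Orthogonality_one'}. Expanding $\vee_1(z-h(y))=\vee_1(z)-h^\alpha\partial_\alpha\vee_1(z)+O(|h|^2)$, the dominant term $h^\alpha\int\partial_\alpha\tilde U_0\cdot\vee_\beta$ is a nondegenerate Gram matrix times $h$. This is the same invertibility already used for the translation block in the preceding proposition. The remaining terms are $O(\|\Phi\|)$ plus $O(|h|\,\|\Phi\|)$, which can be absorbed.

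\textbf{Step 2: the cases $k=1,2$.} I would differentiate the identity $u_0(-h(y))=-\phi(y,0)$ in $y$. This gives
\begin{align*}
Du_0(-h)\,\partial_jh=\partial_j\phi(y,0).
\end{align*}
Differentiating again gives
\begin{align*}
Du_0(-h)\,\partial_{jk}h=\partial_{jk}\phi(y,0)+D^2u_0(-h)[\partial_jh,\partial_kh].
\end{align*}
The matrix $Du_0(-h)$ is uniformly invertible by Step 1, and $u_0$ is smooth with bounded derivatives. It follows that $\|\partial h\|_{C^{0,\alpha}}\le C\|\phi\|_{C^{1,\alpha}}$ and $\|\partial^2h\|_{C^{0,\alpha}}\le C\|\phi\|_{C^{2,\alpha}}+C\|\partial h\|^2$. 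The quadratic term is bounded by $C\|\phi\|_{C^{1,\alpha}}\|\phi\|_{C^{2,\alpha}}$, which is again dominated by $O(\|\phi\|_{C^{2,\alpha}})$ because $\Phi$ is small in $C^{1,\alpha}$. The H\"older seminorms are handled in the same way, since composition with a smooth map and multiplication by $C^{0,\alpha}$ coefficients preserve $C^{0,\alpha}$ with constants depending only on $u_0$.

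\textbf{Main obstacle.} The one point that needs care is the interplay with the gauge transformation $G_\gamma$ used to enforce \eqref{Orthogonality_three}. The nodal set is gauge invariant, but $\phi$ is measured after the gauge change, and $e^{i\gamma}$ rotates the phase. The identity in Step 1 then becomes
\begin{align*}
e^{i\gamma(y,0)}\hat u(y,0)=0,
\end{align*}
which is still $0$, so the nodal-set argument survives unchanged. If instead one insists on working from \eqref{Orthogonality_one'}, the derivative identities \eqref{Orthogonality1}--\eqref{Orthogonality2} give $\partial h$ and $\partial^2h$ directly, with the quadratic remainders $\mathcal R_{1\cdot},\mathcal R_{2\cdot}$ absorbed by smallness. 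To avoid circularity, since those remainders were themselves estimated using this proposition, I would run the nodal-set argument first.
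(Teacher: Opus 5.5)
Your argument is correct and is essentially the paper's proof. Both use that the Fermi coordinates are adapted to the nodal set, so $\hat u(y,0)=0$ gives $\phi(y,0)=-\tilde u_0(-h(y))$; both then invert $u_0$ near the origin via $Du_0(0)=f'(0)\,\mathrm{Id}$ and differentiate this identity once and twice, and you are slightly more careful than the paper about the quadratic term $D^2u_0[\partial h,\partial h]$ and the phase factor $e^{\imath\gamma}$.

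The one thing to discard is the orthogonality-based ``alternative route''. Orthogonality alone cannot bound $h$ by the shifted $\phi$: if $\hat U=\tilde U_0(z-h_0)$, then $h=h_0$ gives $\Phi=0$ with every orthogonality condition satisfied. The expansion you sketch actually controls $h$ only by the unshifted difference $\hat U-\tilde U_0(z)$, so the nodal-set normalisation is indispensable, as your last sentence already suggests.
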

\begin{proof}
Notice that $\hat u_\e(y,0)=0$ in the Fermi coordinates and $\|h\|_{C^{2,\alpha}}=o(1)$, we have in this case
	\begin{align}\label{h_phi_relation}
	\phi(y,0)=-\tilde u_0(h(y)) = -u_0(h(y)).
	\end{align}
Since the 2-dimensional degree 1 vortex solution \eqref{eqn_2DSolution} has Jacobian $|D u_0(0,0)|>c_0>0$, $u_0$ is invertible near a neighbourhood of $(0,0)$ in $\mathbb C$ with the Jacobian of the inverse of $u_0$ in this neighbourhood bounded above by $\frac{2}{c_0}$. Differentiating \eqref{h_phi_relation} once we get
	\begin{align}\label{h_phi_relation_1st_derivative}
	\nabla\phi(y,0)= -\left[(\nabla u_0)(h(y))\right]\cdot\nabla h(y).
	\end{align}
Differentiating again we get
	\begin{align}\label{h_phi_relation_2nd_derivative}
	\nabla^2\phi(u,0)=-D^2u_0\cdot\nabla h\otimes\nabla h - D u_0\cdot \nabla^2 h.
	\end{align}
Then using \eqref{h_phi_relation}, \eqref{h_phi_relation_1st_derivative}, \eqref{h_phi_relation_2nd_derivative} and that the Jacobian has a positive lower bound, we have the desired bounds.
	\begin{align*}
	\|h\|_{C^{k,\alpha}(B^n_r(y_0))}\leq\frac{2}{c_0}\|\phi\|_{C^{k,\alpha}(B^{n+2}_r(y_0,0))}
	\end{align*}
for $k=0,1,2$.
\end{proof}

\section{Improved regularity for the transition layers}\label{Improved_Regularity}
We will show in this section that if the nodal sets of $u_\e$ are uniformly bounded in $C^2$ as $\e\rightarrow0$, then they are uniformly bounded in $C^{2,\alpha}$ as $\e\rightarrow0$. This is analogous to the results \cite{Wang2019a,Wang2019} for the Allen--Cahn equations.
\subsection{H\"older bound for $\Phi$}

Using the decomposition in \eqref{Decomposition}, the linearization of equation \eqref{eqn_MainEllipticEquationforphi} for $\begin{pmatrix}\phi,\omega\end{pmatrix}^T$ on the space orthogonal to the gauge transformations \eqref{Orthogonality_three}, i.e. \eqref{Elliptic_L}, is elliptic, we have the following estimates.
	\begin{proposition}\label{Holder_Phi}
	\begin{align*}\left\|\begin{pmatrix}\phi\\\omega\end{pmatrix}\right\|_{C^{2,\alpha} (B_1(y_0,z_0))}\leq C\|\Delta_{\hat M}h+H_{\hat M}\|_{C^{0,\alpha}(B_2(y_0,z_0))}+\sigma\left\|\begin{pmatrix}\phi\\\omega\end{pmatrix}\right\|_{C^{2,\alpha}(B_2(y_0,z_0))}+\mathcal{O}(\e^2),
	\end{align*}
where $\hat M$ is the nodal set of $\hat u_\e$ and $\sigma<1$ is a small constant.
	\end{proposition}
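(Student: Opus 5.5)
The plan is to rewrite the system \eqref{eqn_MainEllipticEquationforphi} for $\Phi=(\phi,\omega)^T$ as a linear elliptic system with a small nonlinear remainder, and then apply interior Schauder estimates. We work with the gauge-fixed perturbation, so that \eqref{Orthogonality_three} holds. Subtracting $\Theta_{\tilde U_0}\Theta^\star_{1,\tilde U_0}\Phi=0$ turns the non-elliptic linearisation $S'_{1,\tilde U_0}$ into the elliptic operator $L_{1,\tilde U_0}$ of \eqref{Elliptic_L}, by the decomposition \eqref{Decomposition}. Expanding $S_1(\tilde U_0+\Phi)=0$ around $\tilde U_0$ then gives
\begin{align*}
L_{1,\tilde U_0}\Phi = -S_1(\tilde U_0(z-h(y))) - N(\Phi),
\end{align*}
where $N(\Phi)$ collects the terms at least quadratic in $\Phi$. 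These are $|\phi|^2\tilde u_0$, $|\phi|^2\phi$, $\omega\phi$, $|\omega|^2\tilde u_0$ and $\langle\nabla\phi,\imath\phi\rangle$, all schematically. Since $\Phi$ is small in $C^{1,\alpha}$ by hypothesis \textbf{(L)}, each of these satisfies $\|N(\Phi)\|_{C^{0,\alpha}(B_2)}\le C\|\Phi\|_{C^{1,\alpha}(B_2)}\|\Phi\|_{C^{2,\alpha}(B_2)}$. Taking $\e$ small makes this at most $\tfrac{\sigma}{2}\|\Phi\|_{C^{2,\alpha}(B_2)}$.

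Next I estimate the error of the ansatz $S_1(\tilde U_0(z-h(y)))$ using the Fermi coordinate expansions of Section~\ref{Operators_Fermi_Coordinate_Expansion}, which are exactly the inhomogeneous terms written out in \eqref{eqn_MainEllipticEquationforphi}. The two-dimensional part cancels up to the cutoff error $(v,b)$ of Lemma~\ref{CutoffLemma}, which is $O(\e^3)$ in $C^2$. The leading surviving term is
\begin{align*}
-(\Delta_{M_z}h^\beta+H^\beta_z)\,\big(\nabla^{\tilde A_0}_\beta\tilde u_0,\ \tilde A_{0,\alpha\beta}dz^\alpha\big)(z-h(y)).
\end{align*}
By \eqref{Metric_Difference} and \eqref{LevelSetSecondFundamentalForm} we may replace $\Delta_{M_z}$ and $H_z$ by $\Delta_{\hat M}$ and $H_{\hat M}$ at a cost of $O(\e)|z|$ times derivatives of $h$. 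Because $\tilde U_0$ decays exponentially, this cost is $O(\e^2)+o(1)\|h\|_{C^{2,\alpha}}$ in $C^{0,\alpha}$. The remaining terms involve $a^{ij}h^\beta_ih^\gamma_j$, $b^{ij}_k$, $c^j$, $d^{\beta j}_k$, and the contribution from the normal connection $\omega_i$.
\begin{itemize}
\item The terms with $a^{ij}h^\beta_ih^\gamma_j$, and those with $b^{ij}_k$ against two factors of $\nabla h$, are quadratic in $\nabla h$.
\item The coefficients $c^j$ and $d^{\beta j}_k$ are $O(\e)$ in $C^{0,\alpha}$ by Proposition~\ref{Fermi_bounds}.
\item The normal connection contributes $O(\e)$ by Proposition~\ref{small-normal-connection}.
\end{itemize}
Each of these terms is multiplied by a factor that is either $O(\e)$ or $\nabla h$. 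By Proposition~\ref{phi_bounds_h}, $\|h\|_{C^{2,\alpha}}\le C\|\phi\|_{C^{2,\alpha}}$, so the quadratic terms are bounded by $o(1)\|\Phi\|_{C^{2,\alpha}(B_2)}$ and the mixed terms by $O(\e^2)+o(1)\|\Phi\|_{C^{2,\alpha}}$. The last group is the mismatch term $(\Delta^{\tilde A_0}-\Delta^{\hat A})[\hat u]$. It is linear in $\omega$ with smooth bounded coefficients, so I absorb it into the operator itself, which only alters lower-order coefficients.

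With these bounds in hand, I apply interior Schauder estimates for the elliptic system $L_{1,\tilde U_0}$, whose coefficients are uniformly bounded in $C^{0,\alpha}$ because $\tilde U_0$ is smooth with bounded derivatives. This gives
\begin{align*}
\|\Phi\|_{C^{2,\alpha}(B_1)}\le C\big(\|L\Phi\|_{C^{0,\alpha}(B_2)}+\|\Phi\|_{C^0(B_2)}\big).
\end{align*}
\textbf{The main obstacle is the $C^0$ term}, since Schauder estimates alone cannot bound it by the right-hand side. I would first handle it by interpolation, $\|\Phi\|_{C^0}\le\sigma\|\Phi\|_{C^{2,\alpha}}+C_\sigma\|\Phi\|_{L^2}$. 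The $L^2$ norm I would control through coercivity of $L_{1,U_0}$ on each normal slice restricted to the complement of $\operatorname{span}\{\vee_1,\vee_2\}$, which is the linear stability of $U_0$ \cite{Gustafson2000,Stuart1994}. The orthogonality conditions \eqref{Orthogonality_one}--\eqref{Orthogonality_two} put $\Phi$ in that complement, and the $y$-derivatives are handled using \eqref{Orthogonality1} and \eqref{Orthogonality2}. If this slicewise $L^2$ control turns out not to be uniform in $y$, the fallback is a contradiction/blow-up argument. One normalises $\Phi$, passes to a limit solving $L_{1,U_0}\Phi_\infty=0$ on $\mathbb R^n\times\mathbb R^2$, and uses Liouville-type nondegeneracy together with the orthogonality conditions to conclude $\Phi_\infty=0$. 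Either route yields the stated inequality with $\sigma<1$ and the $O(\e^2)$ remainder.
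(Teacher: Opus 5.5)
Your reduction is the same as the paper's. You impose $\Theta^\star_{1,\tilde U_0}\Phi=0$ so that $S'_{1,\tilde U_0}\Phi=L_{1,\tilde U_0}\Phi$, write $L\Phi=-S_1(\tilde U_0(z-h(y)))-\mathcal N(\Phi)$, and bound $\mathcal N$, $(v,b)$ and $E$ as the paper does. The paper then just cites Schauder theory. You rightly note that this leaves $C\|\Phi\|_{C^0(B_2)}$ on the right, which is linear in $\Phi$ with a non-small constant, so it cannot be absorbed into $\sigma\|\Phi\|_{C^{2,\alpha}(B_2)}$ by smallness of $\Phi$ or $\e$.

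The gap is that neither of your two remedies removes this term. Interpolation only replaces it by $C_\sigma\|\Phi\|_{L^2}$. Slicewise coercivity of $L_z$ on $\{\vee_1,\vee_2\}^\perp$ does not bound a local $L^2$ norm by $\|L\Phi\|$. Since $L_z\Phi=L\Phi+\Delta_y\Phi+\dots$, you must integrate by parts in $y$ against a cutoff $\eta$, which gives $c\int\eta^2|\Phi|^2\le\int\eta^2\langle L\Phi,\Phi\rangle+C\int|\nabla\eta|^2|\Phi|^2$. For radii $1$ and $2$ the last term is as large as the left side. It also requires $\Phi$ and $L\Phi$ on whole slices $B^n_2(y_0)\times\mathbb R^2$, not on $B_2(y_0,z_0)$. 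The blow-up fallback has the same defect. Normalising by the norms in the statement gives a limit only on $B_2$, where no Liouville theorem is available. A bounded limit on $\mathbb R^n\times\mathbb R^2$ needs normalisation by a global (or exponentially weighted) norm over the whole rescaled domain, and that proves a different estimate.

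This is not a technicality: at fixed radii $1,2$ no argument can give a small $\sigma$. Take the flat model ($h\equiv0$, $H=0$) and let $\Psi=(\xi(r)e^{\imath\theta},\eta(r)d\theta)$ with $\xi,\eta$ real and bounded, and $L_z\Psi=\mu\Psi$ for some $\mu>0$ (e.g. $\mu>1$ in the continuous spectrum). This sector has the following properties:
\begin{itemize}
\item it is $L_z$-invariant and satisfies $\Theta^\star\Psi=0$;
\item it is orthogonal to $\vee_1,\vee_2$ by angular symmetry;
\item it has $\xi(0)=0$, so it does not move the zero set.
\end{itemize}
Then $\Phi=\delta e^{\sqrt\mu\,y_1}\Psi(z)$ solves $L\Phi=0$ and satisfies all three orthogonality conditions. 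Yet $\|\Phi\|_{C^{2,\alpha}(B_1)}\ge c_1\|\Phi\|_{C^{2,\alpha}(B_2)}$ with $c_1>0$ independent of $\delta$ and $\e$.

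What the spectral gap of $L_z$ actually gives is decay in the tangential direction, i.e. an estimate of the form
\begin{align*}
\|\Phi\|_{C^{2,\alpha}(B_1(y_0,z_0))}\le C\|\Delta_{\hat M}h+H_{\hat M}\|_{C^{0,\alpha}(B^n_L(y_0))}+Ce^{-\kappa L}\sup_{B^n_L(y_0)\times\mathbb R^2}|\Phi|+\mathcal O(\e^2).
\end{align*}
You can prove this by running your slicewise energy argument with an exponential weight $e^{-\kappa|y-y_0|}$, with $\kappa^2$ below the gap, in place of a compactly supported cutoff. Alternatively, use a blow-up normalised on the whole domain, as in Wang--Wei. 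Then take $L\sim|\log\e|$. This is what the iteration in the proof of Theorem \ref{main} really uses. It is the missing step both in your proposal and in the paper's appeal to standard elliptic estimates.
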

	\begin{proof}
	By the computations in \eqref{eqn_MainEllipticEquationforphi}, we deduce the equation satisfied by $\Phi=\hat U_\e-\tilde U_0$.
	\begin{align}\label{Phi_Equation}
	\begin{split}
		&\begin{pmatrix}-\Delta^{\tilde A_0(z-h(y))}\phi(y,z)\\d^\star d\omega(y,z))\end{pmatrix}+\begin{pmatrix}\left(\Delta^{\tilde A_0(z-h(y))}\hat u_\e-\Delta^{\hat A_\e(y,z)}\hat u_\e\right)(y,z)\\ 0 \end{pmatrix}\\
	&+\begin{pmatrix}\frac{1}{2}(1-|\tilde u_0(z-h(y))|^2)\tilde u_0(z-h(y))-\frac{1}{2}(1-|\hat u_\e(y,z)|^2)\hat u_\e(y,z)\\ \langle(\nabla^{\hat A_\e(y,z)}\hat u_\e)(y,z),i\hat u_\e(y,z)\rangle-\langle(\nabla^{\tilde A_0(z-h(y))}\tilde u_0)(z-h(y)),i \tilde u_0(z-h(y))\rangle \end{pmatrix}\\
	&=-\begin{pmatrix}\left[H^\beta_z(y)+(\Delta_{M_z}h^\beta)(y)\right]\left[(\nabla^{\tilde A_0(z-h(y))}_\beta \tilde u_0)(z-h(y))\right]\\\left[H^\beta_z(y)+(\Delta_{M_z}h^\beta)(y)\right]\left[\tilde A_{0,\alpha\beta}(z-h(y))\right]dz^\alpha\end{pmatrix}-\begin{pmatrix}v(z-h(y))\\b(z-h(y))\end{pmatrix}\\
	&+E,
	\end{split}
	\end{align}
	with
	\begin{align*}
	E&=\begin{pmatrix}a^{ij}(y,z)h^\beta_i(y)h^\gamma_j(y)\nabla^{\tilde A_0(z-h(y))}_{\beta\gamma}\tilde u_0(z-h(y))\\a^{ij}(y,z)h^\beta_i(y)h^\gamma_j(y)\partial_\beta \tilde A_{0,\gamma\alpha}(z-h(y))dz^\alpha \end{pmatrix}\\
	&+\begin{pmatrix} 0 \\ \left(a^{ij}(y,z)h^\gamma_jh^\beta_{jk}(y)\tilde A_{0,\gamma\beta}(z-h(y))+b^{ij}_k(y,z)h^\gamma_i(y)h^\beta_j(y)\tilde A_{0,\gamma\beta}(z-h(y))\right)dy^k \end{pmatrix}\\
	&-\begin{pmatrix} 0 \\ \left(d^{\beta j}_k(y)h^\gamma_j(y)\tilde A_{0,\beta\gamma}(z-h(y))+h^\gamma_k(y)H^\beta_z(y)\tilde A_{0,\beta\gamma}(z-h(y))\right)dy^k \end{pmatrix}\\
	&-\begin{pmatrix} 0 \\ \left(h^\gamma_k(y)c^j(y)h^\beta_i(y)\tilde A_{0,\beta\gamma}(z-h(y))\right)dy^k \end{pmatrix}.
	\end{align*}
	We linearise the LHS of \eqref{Phi_Equation} at $(\tilde u_0,\tilde A_0)^T$ and get
	\begin{align*}
	&\begin{pmatrix}-\Delta^{\tilde A_0(z-h(y))}\phi(y,z)\\d^\star d\omega(y,z))\end{pmatrix}+\begin{pmatrix}-\frac{1}{2}(1-|\tilde u_0(z-h(y))|^2)\phi(y,z)+\tilde u_0(z-h(y))\langle\tilde u_0(z-h(y)),\phi\rangle\\ |\tilde u_0(z-h(y))|^2\omega(y,z)\end{pmatrix}\\
	&+\begin{pmatrix}+2\imath\nabla^{\tilde A_0(z-h(y))}\tilde u_0(z-h(y))\cdot\omega(y,z)-\imath\tilde u_0(z-h(y)) d^\star\omega(y,z)\\-\langle\nabla^{\tilde A_0(z-h(y))}\tilde u_0(z-h(y)),\imath\phi(y,z)\rangle-\langle\nabla^{\tilde A_0(z-h(y))}\phi(y,z),\imath\tilde u_0(z-h(y))\rangle\end{pmatrix}\\
	&+\mathcal N\begin{pmatrix}\phi(y,z)\\\omega(y,z)\end{pmatrix}\\
	&=-\begin{pmatrix}\left[H^\beta_z(y)+(\Delta_{M_z}h^\beta)(y)\right]\left[(\nabla^{\tilde A_0(z-h(y))}_\beta \tilde u_0)(z-h(y))\right]\\\left[H^\beta_z(y)+(\Delta_{M_z}h^\beta)(y)\right]\left[\tilde A_{0,\alpha\beta}(z-h(y))\right]dz^\alpha\end{pmatrix}\\
	&-\begin{pmatrix}v(z-h(y))\\b(z-h(y))\end{pmatrix}+E.
	\end{align*}
	The decomposition of the linearised operator on the LHS by \eqref{Decomposition} and the orthogonality to gauge transformations condition \eqref{Orthogonality_three} 
	\begin{align*}
	\Theta^\star_{1,(\tilde u_0(z-h(y),\tilde A_0(z-h(y))^T}\begin{pmatrix}\phi(y,z)\\\omega(y,z)\end{pmatrix}=0
	\end{align*} 
	then gives
	\begin{align}\label{Elliptic_Equation_Phi}
	&S'_{1,(\tilde u_0(z-h(y),\tilde A_0(z-h(y))^T}\begin{pmatrix}\phi(y,z)\\\omega(y,z)\end{pmatrix}+\mathcal N\begin{pmatrix}\phi(y,z)\\\omega(y,z)\end{pmatrix}\\\nonumber
	&=L_{1,(\tilde u_0(z-h(y),\tilde A_0(z-h(y))^T}\begin{pmatrix}\phi(y,z)\\\omega(y,z)\end{pmatrix}-\Theta_{(\tilde u_0(z-h(y),\tilde A_0(z-h(y))^T}\Theta^\star_{1,(\tilde u_0(z-h(y),\tilde A_0(z-h(y))^T}\begin{pmatrix}\phi(y,z)\\\omega(y,z)\end{pmatrix}\\\nonumber
	&+\mathcal N\begin{pmatrix}\phi(y,z)\\\omega(y,z)\end{pmatrix}\\\nonumber
	&=L_{1,(\tilde u_0(z-h(y),\tilde A_0(z-h(y))^T}\begin{pmatrix}\phi(y,z)\\\omega(y,z)\end{pmatrix}+\mathcal N\begin{pmatrix}\phi(y,z)\\\omega(y,z)\end{pmatrix}\\\nonumber
	&=RHS\\\nonumber
	&=-\left[H^\beta_z(y)+(\Delta_{M_z}h^\beta)(y)\right]\tilde\vee_\beta(z-h(y))-\begin{pmatrix}v(z-h(y))\\b(z-h(y))\end{pmatrix}+E.
	\end{align}
	Here 
	\begin{align*}
	\tilde\vee_\beta = \begin{pmatrix}\left[(\nabla^{\tilde A_0(z-h(y))}_\beta \tilde u_0)(z-h(y))\right]\\\left[\tilde A_{0,\alpha\beta}(z-h(y))\right]dz^\alpha\end{pmatrix}
	\end{align*} 
	are approximations to the translational zero modes defined in \eqref{Zero_Mode} and satisfying
	\begin{align*}
	\|\tilde\vee_\beta\|_{C^{k,\alpha}(B_2(y_0,z_0))}=O(1)
	\end{align*}
	bounded by a constant depending only on $k$, for $k\in\mathbb N$ and $\beta=1,2$.

	The nonlinear term $\mathcal N$ can be written down explicitly and satisfies
	\begin{align*}
	\left\|\mathcal N\begin{pmatrix}\phi\\\omega\end{pmatrix}\right\|_{C^{0,\alpha}(B_2(y_0,z_0))}\leq \mathcal{O}\left(\left\|\begin{pmatrix}\phi\\\omega\end{pmatrix}\right\|^2_{C^{1,\alpha}(B_2(y_0,z_0))}\right)\leq \mathcal{O}\left(\left\|\begin{pmatrix}\phi\\\omega\end{pmatrix}\right\|^2_{C^{2,\alpha}(B_2(y_0,z_0))}\right),
	\end{align*}
and
	\begin{align*}
	\left\|\begin{pmatrix}v\\b\end{pmatrix}\right\|_{C^{0,\alpha}(B_2(y_0,z_0))}\leq \left\|\begin{pmatrix}v\\b\end{pmatrix}\right\|_{C^{2,\alpha}(B_2(y_0,z_0))}=\mathcal{O}(\e^2).
	\end{align*}
	Using the bounds Proposition \eqref{Fermi_bounds}, Proposition \ref{phi_bounds_h}, we have
	\begin{align*}
	&\|E\|_{C^{0,\alpha}(B_2(y_0,z_0))}\\
	&\leq O\left(\left\|\begin{pmatrix}-a^{ij}h^\beta_ih^\gamma_j\nabla^{\tilde A_0}_{\beta\gamma}\tilde u_0\\a^{ij}h^\beta_ih^\gamma_j\partial_\beta \tilde A_{0,\gamma\alpha}z^\alpha \end{pmatrix}\right\|_{C^{0,\alpha}(B_2(y_0,z_0))}\right)\\
	&+O\left(\left\|\begin{pmatrix} 0 \\ \left(a^{ij}h^\gamma_jh^\beta_{jk}\tilde A_{0,\gamma\beta}+b^{ij}_kh^\gamma_ih^\beta_j\tilde A_{0,\gamma\beta}\right)dy^k \end{pmatrix}\right\|_{C^{0,\alpha}(B_2(y_0,z_0))}\right)\\
	&+O\left(\left\|\begin{pmatrix} 0 \\ \left(d^{\beta j}_kh^\gamma_j\tilde A_{0,\beta\gamma}+h^\gamma_kH^\beta_z\tilde A_{0,\beta\gamma}\right)dy^k \end{pmatrix}\right\|_{C^{0,\alpha}(B_2(y_0,z_0))}\right)\\
	&+O\left(\left\|\begin{pmatrix} 0 \\ \left(h^\gamma_kc^jh^\beta_i\tilde A_{0,\beta\gamma}\right)dy^k \end{pmatrix}\right\|_{C^{0,\alpha}(B_2(y_0,z_0))}\right)\\
	&\leq O\left(\left\|\begin{pmatrix}\phi\\\omega\end{pmatrix}\right\|_{C^{2,\alpha}(B_2(y_0,z_0))}^2\right)+O(\e^2),
	\end{align*}
	where $a^{ij}(y,z), b^{ij}_k(y,z), c^j(y), d^{\beta j}_k(y), H^\beta_z(y)$ are as defined in \eqref{Notations_Fermi}.


The estimate then follows by standard elliptic estimate applied to $L_{1,(\tilde u_0(z-h(y),\tilde A_0(z-h(y))^T}$ in \eqref{Elliptic_Equation_Phi} (cf. \cite[5.5]{Giaquinta2012}).

	\end{proof}
\subsection{H\"older bound for $\Delta_Mh+H_M$}
For notational simplicity, we will denote by
	\begin{align}\label{Notation_L_Operator}
	\mathcal L:= L_{1,(\tilde u_0(z-h(y),\tilde A_0(z-h(y))^T}.
	\end{align}
Multiplying both sides of \eqref{Elliptic_Equation_Phi} by $\tilde\vee_\alpha(z-h(y)),\alpha=1,2$ respectively and integrating over $z$ over $\mathbb C$, we obtain
 \begin{align}\label{Projection_to_Kernel}
	&\int_{z\in\mathbb C}L_{1,(\tilde u_0(z-h(y),\tilde A_0(z-h(y))^T}\begin{pmatrix}\phi(y,z)\\\omega(y,z)\end{pmatrix}\cdot\tilde\vee_\alpha(z-h(y))\\\nonumber
	&=\int_{z\in\mathbb C}\begin{pmatrix} -\Delta^{\tilde A_0(z-h(y))}\phi-\frac{1}{2}(1-3|\tilde u_0(z-h(y))|^2)\phi+2\imath\nabla^{\tilde A_0(z-h(y))}\tilde u_0(z-h(y))\cdot\omega \\ -\Delta\omega+|\tilde u_0(z-h(y))|^2\omega-2\langle\nabla^{\tilde A_0(z-h(y))}\tilde u_0(z-h(y)),\imath\phi\rangle \end{pmatrix}\cdot\tilde\vee_\alpha(z-h(y))\\\nonumber
	&=-\int_{z\in\mathbb C}\left[H^\beta_z(y)+(\Delta_{M_z}h^\beta)(y)\right]\tilde\vee_\beta(z-h(y))\cdot\tilde\vee_\alpha(z-h(y))-\int_{z\in\mathbb C}\begin{pmatrix}v(z-h(y))\\b(z-h(y))\end{pmatrix}\cdot\tilde\vee_\alpha(z-h(y))\\\nonumber
	&+\int_{z\in\mathbb C}E\cdot\tilde\vee_\alpha(z-h(y))-\int_{z\in\mathbb C}\mathcal N\begin{pmatrix}\phi(y,z)\\\omega(y,z)\end{pmatrix}\cdot\tilde\vee_\alpha(z-h(y)).
	\end{align}
	In Fermi coordinates, we have
	\begin{align*}
	&\begin{pmatrix}-\Delta^{\tilde A_0(z-h(y))}\phi(y,z)\\-\Delta\omega(y,z))\end{pmatrix}\\
	&=\begin{pmatrix} -a^{ij}(y,z)\partial_{ij}^{\tilde A_0(z-h(y))}\phi(y,z)- c^j(y,z)\partial_j^{\tilde A_0(z-h(y))}\phi(y,z) \\ \left\{-a^{ij}(y,z)\partial_i\partial_j[\omega_\gamma(y,z)]+c^j(y,z)\partial_i(\omega_\gamma(y,z)) \right\}dz^\gamma \end{pmatrix}\\
	&+\begin{pmatrix} -\sum_{\alpha=1}^2\partial_{\alpha\alpha}^{\tilde A_0(z-h(y))}\phi(y,z)+H^\alpha_z(y)\partial_\alpha^{\tilde A_0(z-h(y))}\phi(y,z) \\ \left\{-\sum_{\beta=1}^2\partial_\beta\partial_\beta[\omega_\gamma(y,z)]-\partial_\gamma\partial_\beta(\omega_\beta(y,z))+H^\beta_z(y)\omega_{\beta\gamma}(y,z)\right\}dz^\gamma + d^{\beta j}_k(y,z)\omega_{j\beta}(y,z) dy^k \end{pmatrix}\\
	&=\begin{pmatrix} -a^{ij}(y,0)\partial_{ij}^{\tilde A_0(z-h(y))}\phi(y,z)- c^j(y,0)\partial_j^{\tilde A_0(z-h(y))}\phi(y,z) \\ \left\{-a^{ij}(y,0)\partial_i\partial_j[\omega_\gamma(y,z)]+c^j(y,0)\partial_i(\omega_\gamma(y,z)) \right\}dz^\gamma \end{pmatrix}+\mathcal R_3\\
	&+\begin{pmatrix} -\sum_{\alpha=1}^2\partial_{\alpha\alpha}^{\tilde A_0(z)}\phi(y,z)+H^\alpha_0(y)\partial_\alpha^{\tilde A_0(z)}\phi(y,z) \\ \left\{-\sum_{\beta=1}^2\partial_\beta\partial_\beta[\omega_\gamma(y,z)]-\partial_\gamma\partial_\beta(\omega_\beta(y,z))+H^\beta_0(y)\omega_{\beta\gamma}(y,z)\right\}dz^\gamma + d^{\beta j}_k(y,0)\omega_{j\beta}(y,z) dy^k \end{pmatrix}\\
	&+\mathcal R_4\\
	\end{align*}
	Here
	\begin{align}\label{Reminder_R3_R4_estimates}
	\begin{split}
	\mathcal R_3&=\begin{pmatrix} [a^{ij}(y,0)-a^{ij}(y,z)]\partial_{ij}^{\tilde A_0(z-h(y))}\phi(y,z) + [c^j(y,0)-c^j(y,z)]\partial_j^{\tilde A_0(z-h(y))}\phi(y,z) \\ \left\{[a^{ij}(y,0)-a^{ij}(y,z)]\partial_i\partial_j[\omega_\gamma(y,z)]-[c^j(y,0)-c^j(y,z)]\partial_i(\omega_\gamma(y,z)) \right\}dz^\gamma \end{pmatrix},\\
	\mathcal R_4&=\begin{pmatrix} [H^\alpha_z(y)-H^\alpha_0(y)]\partial_\alpha^{\tilde A_0(z)}\phi(y,z) \\ [H^\alpha_z(y)-H^\alpha_0(y)]\omega_{\beta\gamma}(y,z)dz^\gamma + [d^{\beta j}_k(y,z)-d^{\beta j}_k(y,0)]\omega_{j\beta}(y,z) dy^k \end{pmatrix},
	\end{split}
	\end{align}
	with $\|\mathcal R_3\|_{C^{0,\alpha}(B_2(y_0,z_0))}, \|\mathcal R_4\|_{C^{0,\alpha}(B_2(y_0,z_0))}= O(\e^2)+O\left(\left\|\begin{pmatrix}\phi\\\omega\end{pmatrix}\right\|^2_{C^{2,\alpha}(B_2(y_0,z_0))}\right)$ by the estimates in \eqref{Metric_Difference} and \eqref{SecondFundamentalForm_M}.

	Using the expression of this Laplacian-like operator in Fermi coordinates, \eqref{Projection_to_Kernel} then reads as
	\begin{align}\label{Projection_to_Kernel_Fermi}
	&\mathbf{I}+\mathbf{II}+\mathbf{III}+\mathbf{IV}+ \int_{z\in\mathbb C}\left[\mathcal R_3+\mathcal R_4\right]\cdot\tilde\vee_\alpha(z-h(y))\\\nonumber
	&=\mathbf{V}-\int_{z\in\mathbb C}\begin{pmatrix}v(z-h(y))\\b(z-h(y))\end{pmatrix}\cdot\tilde\vee_\alpha(z-h(y))\\\nonumber
	&+\int_{z\in\mathbb C}E\cdot\tilde\vee_\alpha(z-h(y))-\int_{z\in\mathbb C}\mathcal N\begin{pmatrix}\phi(y,z)\\\omega(y,z)\end{pmatrix}\cdot\tilde\vee_\alpha(z-h(y)),
	\end{align}
	with
	\begin{align*}
	\mathbf{I}&=\int_{z\in\mathbb C}\begin{pmatrix} -a^{ij}(y,0)\partial_{ij}^{\tilde A_0(z-h(y))}\phi(y,z)- c^j(y,0)\partial_j^{\tilde A_0(z-h(y))}\phi(y,z) \\ \left\{-a^{ij}(y,0)\partial_i\partial_j[\omega_\gamma(y,z)]+c^j(y,0)\partial_i(\omega_\gamma(y,z)) \right\}dz^\gamma \end{pmatrix}\cdot\tilde\vee_\alpha(z-h(y))\\
	\mathbf{II}&=\int_{z\in\mathbb C} \begin{pmatrix} -\sum_{\alpha=1}^2\partial_{\alpha\alpha}^{\tilde A_0(z)}\phi(y,z)\\ \left\{-\sum_{\beta=1}^2\partial_\beta\partial_\beta[\omega_\gamma(y,z)]-\partial_\gamma\partial_\beta(\omega_\beta(y,z))\right\}dz^\gamma \end{pmatrix}\cdot\tilde\vee_\alpha(z-h(y))\\
	\mathbf{III}&=\int_{z\in\mathbb C} \begin{pmatrix}H^\alpha_0(y)\partial_\alpha^{\tilde A_0(z)}\phi(y,z) \\ H^\beta_0(y)\omega_{\beta\gamma}(y,z)dz^\gamma +d^{\beta j}_k(y,0)\omega_{j\beta}(y,z) dy^k \end{pmatrix}\cdot\tilde\vee_\alpha(z-h(y))\\
	\mathbf{IV}&=\int_{z\in\mathbb C}\begin{pmatrix} -\frac{1}{2}(1-3|\tilde u_0(z-h(y))|^2)\phi+2\imath\nabla^{\tilde A_0(z-h(y))}\tilde u_0(z-h(y))\cdot\omega \\ |\tilde u_0(z-h(y))|^2\omega-2\langle\nabla^{\tilde A_0(z-h(y))}\tilde u_0(z-h(y)),\imath\phi\rangle \end{pmatrix}\cdot\tilde\vee_\alpha(z-h(y))\\
	\mathbf{V}&=-\int_{z\in\mathbb C}\left[H^\beta_z(y)+(\Delta_{M_z}h^\beta)(y)\right]\tilde\vee_\beta(z-h(y))\cdot\tilde\vee_\alpha(z-h(y)).
	\end{align*}

	By choosing appropriate local coordinates as in Proposition \ref{Fermi_bounds} (making $a^{ij}(y,0)=\delta_{ij},c^j(y,0)=0$ for a given $y\in M$) and using the fact that $\tilde A_0$ does not have tangential components from its definition ($\tilde A_{0,j}=0$ for $j=1,\cdots,n$), we have
	\begin{align*}
	&\|\mathbf{I}\|_{C^{2,\alpha}(B_2(y_0))}\\
	&=\left\|\int_{z\in\mathbb C}\begin{pmatrix} -\partial_{ij}\phi(y,z)\\ \partial_i\partial_j[\omega_\gamma(y,z)]dz^\gamma \end{pmatrix}\cdot\tilde\vee_\alpha(z-h(y))\right\|_{C^{2,\alpha}(B_2(y_0))}\\
	&=\left\|\mathcal R_{21}\right\|_{C^{2,\alpha}(B_2(y_0))}+\left\|\mathcal R_{22}\right\|_{C^{2,\alpha}(B_2(y_0))}+O(\e^2)\\
	&=O\left(\left\|\begin{pmatrix}\phi\\\omega\end{pmatrix}\right\|_{C^{2,\alpha}(B_2(y_0))}^2\right)+O(\e^2),
	\end{align*}
	where we used the bounds \eqref{Orthogonality2} from the orthogonality conditions and the difference estimates of $\tilde\vee_\alpha$ and $\vee_\alpha$.

	Using integration by parts, we get
	\begin{align*}
	&\|\mathbf{II}+\mathbf{IV}\|_{C^{2,\alpha}(B_2(y_0))}\\
	&=\left\|\int_{z\in\mathbb C}\begin{pmatrix}\phi\\\omega\end{pmatrix}\cdot\mathcal L_y\left[\tilde\vee_\alpha(z-h(y))\right]\right\|{C^{2,\alpha}(B_2(y_0))}\\
	&=\left\|\int_{z\in\mathbb C}\begin{pmatrix}\phi\\\omega\end{pmatrix}\cdot\left[\begin{pmatrix}v\\ b\end{pmatrix}-\mathcal N\begin{pmatrix}\phi\\\omega\end{pmatrix}\right]\right\|{C^{2,\alpha}(B_2(y_0))}\\
	&=O\left(\left\|\begin{pmatrix}\phi\\\omega\end{pmatrix}\right\|_{C^{2,\alpha}(B_2(y_0))}^2\right)+O(\e^2),
	\end{align*}
	where $\mathcal L_y$ is the linearised operator \eqref{Notation_L_Operator} in $z$ components by fixing $y$ in Fermi coordinates.

	By the Fermi coordinates estimates \eqref{SecondFundamentalForm_M} and \eqref{Metric_Difference}, we also have
	\begin{align*}
	\|\mathbf{III}\|_{C^{2,\alpha}(B_2(y_0))}=O\left(\left\|\begin{pmatrix}\phi\\\omega\end{pmatrix}\right\|_{C^{2,\alpha}(B_2(y_0))}^2\right)+O(\e^2).
	\end{align*}

	Notice that the approximations to translational zero modes $\tilde\vee_\alpha$ are orthogonal, there exists a positive constant $c_0>0$ so that
	\begin{align*}
	\frac{1}{c_0}\|H_0^\alpha+\Delta_{M_0}h^\alpha\|_{C^{2,\alpha}(B_2(y_0))}
	\leq\|\mathbf{V}\|_{C^{2,\alpha}(B_2(y_0))}
	\leq c_0\|H_0^\alpha+\Delta_{M_0}h^\alpha\|_{C^{2,\alpha}(B_2(y_0))},
	\end{align*}
	where \eqref{SecondFundamentalForm_M} and \eqref{Metric_Difference} is used the replace $H_z^\alpha+\Delta_{M_z}h^\alpha$ by $H_0^\alpha+\Delta_{M_0}h^\alpha$.

	Finally, the terms $\mathcal R_3,\mathcal R_4, E, \mathcal N\begin{pmatrix}\phi\\\omega\end{pmatrix}$ are all having $C^{2,\alpha}$ norms bounded by 
	\begin{align*}
	O\left(\left\|\begin{pmatrix}\phi\\\omega\end{pmatrix}\right\|_{C^{2,\alpha}(B_2(y_0))}^2\right)+O(\e^2)
	\end{align*} 
	by their definitions. We thus conclude from \eqref{Projection_to_Kernel_Fermi} that
\begin{proposition}\label{Holder_Delta_h_plus_H}
	\begin{align*}
	\|\Delta_{\hat M}h+H_{\hat M}\|_{C^{0,\alpha}(B_1(y_0))}\leq \sigma\left\|\begin{pmatrix}\phi\\\omega\end{pmatrix}\right\|_{C^{2,\alpha}(B_2(y_0,z_0))}+\mathcal{O}(\e^2),
	\end{align*}
where $\hat M$ is the nodal set of $\hat u_\e$ and $\sigma<1$ is a small constant.
	\end{proposition}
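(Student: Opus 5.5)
The plan is to read off Proposition~\ref{Holder_Delta_h_plus_H} from the projected identity \eqref{Projection_to_Kernel_Fermi}. We solve it for the term $\mathbf{V}$ and bound every other term by $O(\|\Phi\|^2_{C^{2,\alpha}})+O(\e^2)$, with $\Phi=(\phi,\omega)^T$. The point is that $\mathbf{V}$ is the only term that is linear in the geometric quantity $H^\beta_z+\Delta_{M_z}h^\beta$. After projection onto the approximate kernel $\{\tilde\vee_\alpha\}$, every other contribution is either quadratic in $\Phi$ or of size $\e^2$.

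\textbf{Step 1: lower bound for $\mathbf{V}$.} We fix $y\in B_1(y_0)$ and invert the $2\times2$ Gram matrix
\begin{align*}
G_{\alpha\beta}(y)=\int_{\mathbb C}\tilde\vee_\beta(z-h(y))\cdot\tilde\vee_\alpha(z-h(y))\,dz .
\end{align*}
By translation invariance of the $z$-integral, $G_{\alpha\beta}$ is independent of $y$. It is $O(\e^3)$-close to the Gram matrix of $\vee_1,\vee_2$, because of the cutoff in Lemma~\ref{CutoffLemma}. By radial symmetry that Gram matrix is a positive multiple of the identity, namely $\int (f')^2+(a'/r)^2$. Hence $G$ is uniformly invertible, and
\begin{align*}
\|H_z+\Delta_{M_z}h\|_{C^{0,\alpha}(B_1)}\le C\|\mathbf V\|_{C^{0,\alpha}(B_1)} .
\end{align*}
Next we use \eqref{Metric_Difference}, \eqref{LevelSetSecondFundamentalForm} and Proposition~\ref{Fermi_bounds} to replace $H_z+\Delta_{M_z}h$ by $H_{\hat M}+\Delta_{\hat M}h$. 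The replacement error is $O(\e)|z|\,|\nabla^2h|$ plus $O(\e^2)$. Because $\tilde\vee_\beta$ decays exponentially, this error integrates to $O(\e\|h\|_{C^{2,\alpha}})+O(\e^2)$, and Proposition~\ref{phi_bounds_h} bounds it by $\sigma\|\Phi\|_{C^{2,\alpha}}+O(\e^2)$.

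\textbf{Step 2: the remaining terms.} We take them in the order they appear.
\begin{itemize}
\item For $\mathbf I$, we work in the coordinates of Proposition~\ref{Fermi_bounds} at each base point, where $a^{ij}=\delta_{ij}$ and $c^j=0$. Then $\mathbf I$ becomes the twice-$y$-differentiated orthogonality identity \eqref{Orthogonality2}, so it is bounded by $\mathcal R_{21},\mathcal R_{22}$, which are quadratic in $\Phi$.
\item For $\mathbf{II}+\mathbf{IV}$, we integrate by parts in $z$ so that the self-adjoint two-dimensional operator $\mathcal L_y$ falls on $\tilde\vee_\alpha$. Since $\mathcal L_y\vee_\alpha=0$ by \eqref{eqn_TranslationZeroModeUngauged}, $\mathcal L_y\tilde\vee_\alpha$ is only a cutoff error of size $O(\e^3)$. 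The gauge part of the decomposition \eqref{Decomposition} is harmless because of \eqref{Orthogonality_three} and because the $\vee_\alpha$ are orthogonal to the gauge modes.
\item $\mathbf{III}$ carries a factor $H_0$ or $d^{\beta j}_k$, both $O(\e)$, multiplied by $\Phi$. Absorbing $\e\|\Phi\|\le\sigma\|\Phi\|$ handles it.
\item The terms $\mathcal R_3,\mathcal R_4,E,\mathcal N,(v,b)$ are controlled by the estimates already established in the proof of Proposition~\ref{Holder_Phi} and in \eqref{Reminder_R3_R4_estimates}. Integration against the exponentially decaying $\tilde\vee_\alpha$ preserves $C^{0,\alpha}$ bounds in $y$.
\end{itemize}

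\textbf{Step 3: conclusion.} Collecting the bounds gives
\begin{align*}
\|H+\Delta h\|_{C^{0,\alpha}}\le C\|\Phi\|^2_{C^{2,\alpha}}+C\e\|\Phi\|_{C^{2,\alpha}}+O(\e^2) .
\end{align*}
Hypothesis \textbf{(L)} makes $\|\Phi\|_{C^{2,\alpha}}$ small, so $C\|\Phi\|+C\e\le\sigma$, and the estimate follows.

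The main obstacle is the bound on $\mathbf I$ and $\mathbf{III}$. The cross terms there are linear in $\Phi$, and pairing them with $\tilde\vee_\alpha$ does not automatically make them vanish. I would first check that every such linear term either carries an $\e$ factor from the geometry or is exactly a $y$-derivative of an orthogonality integral \eqref{Orthogonality_one'}–\eqref{Orthogonality_two'}. If some term is neither, I would try to absorb it into the smallness $\sigma$ coming from the Lipschitz convergence in \textbf{(L)}.
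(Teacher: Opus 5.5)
Your proposal follows the paper's route. You project \eqref{Elliptic_Equation_Phi} onto $\tilde\vee_\alpha$, invert the (diagonal) Gram matrix in $\mathbf V$ after replacing $H_z+\Delta_{M_z}h$ by its value on $\hat M$, and then bound the remaining terms as the paper does: $\mathbf I$ via the twice-differentiated orthogonality relations \eqref{Orthogonality2}, $\mathbf{II}+\mathbf{IV}$ by moving $\mathcal L_y$ onto $\tilde\vee_\alpha$, $\mathbf{III}$ by its $O(\e)$ geometric coefficients, the rest by the bounds in Proposition~\ref{Holder_Phi}, with the quadratic bound turned into $\sigma\|\Phi\|_{C^{2,\alpha}}$ by smallness of $\Phi$. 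Your closing worry about linear terms in $\mathbf I$ and $\mathbf{III}$ is already settled by your own Step~2. Your treatment of $\mathbf{II}+\mathbf{IV}$ (using $L\vee_\alpha=0$, leaving only an $O(\e^3)$ cutoff error) and your consistent use of $C^{0,\alpha}$ norms in $y$ are cleaner than the paper's write-up.
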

\begin{proof}[Proof of Theorem \ref{main}]
	Now by combining Proposition \ref{Holder_Phi}, Proposition \ref{Holder_Delta_h_plus_H} we get
	\begin{align*}
	&\|\Delta_{\hat M}h+H_{\hat M}\|_{C^{0,\alpha}(B_1(y_0))}+ \left\|\begin{pmatrix}\phi\\\omega\end{pmatrix}\right\|_{C^{2,\alpha}(B_1(y_0,z_0))}\\
	&=\sigma\left( \|\Delta_{\hat M}h+H_{\hat M}\|_{C^{0,\alpha}(B_2(y_0))}+ \left\|\begin{pmatrix}\phi\\\omega\end{pmatrix}\right\|_{C^{2,\alpha}(B_2(y_0,z_0))} \right)+\mathcal{O}(\e^2),
	\end{align*}
	Since $\sigma<1$ and $h, H, \begin{pmatrix}\phi\\\omega\end{pmatrix}$ have uniformly bounded $C^{2,\alpha}$ bounds, an iteration of this inequality on the radius of balls $\frac{2\ln\e}{\ln\sigma}$ times then gives
	\begin{align}
	\|\Delta_{\hat M}h+H_{\hat M}\|_{C^{0,\alpha}(B_1(y_0))}+ \left\|\begin{pmatrix}\phi\\\omega\end{pmatrix}\right\|_{C^{2,\alpha}(B_1(y_0,z_0))}
	\leq \mathcal{O}(\e^2).
	\end{align}
	By Proposition \ref{phi_bounds_h}, this then gives
	\begin{align*}
	\|H_{\hat M}\|_{C^{0,\alpha}(B_1(y_0))}\leq \mathcal{O}(\e^2).
	\end{align*}
	
	Recall that $\hat M$ denotes the nodal set of scaled solution $\hat u_\e$. We thus obtain that the mean curvature of nodal set $M_\e$ of the unscaled solutions $u_\e$ satisfying \eqref{YMH_Operator} satisfies
	\begin{align}\label{Mean_Curvature_Equation_Unscaled}
	\|H_{M_\e}\|_{C^{0,\alpha}(B_1(y_0))}\leq \mathcal{O}(\e^{1-\alpha})\leq C.
	\end{align}
	By the condition (\textbf{L}), the nodal sets $M_\e$ are uniformly bounded in Lipschitz norms (i.e. there is a uniform gradient bound), \eqref{Mean_Curvature_Equation_Unscaled} is a system of quasilinear uniformly elliptic partial differential equations. The standard Schauder estimates then implies that $M_\e$ are locally uniformly bounded in $C^{2,.\alpha}$ norms for $\alpha\in[0,1)$.

\end{proof}
	
\section{Lipschitz Regularity}\label{Lipschitz_Section}

We will show in this section the uniform Lipschitz graphically part of \textbf{(L)} holds for solutions with almost unit density under the condition that either the ambient dimension is at most $4$ or the solution is miminizing.

The following rigidity lemma is a modified version of \cite[Lemma 12.1]{wang2014new}. Here, we take an oriented orthonormal basis of $\left\{e_1, \ldots, e_n\right\}$ and extend it to an orthonormal basis $\left\{e_1, \ldots, e_n, e_{n+1},e_{n+2}\right\}$ of $\mathbb{R}^{n+2}$. We also denote the codimension 2 plane generated by the first $n$ elements by $P_0=\operatorname{span}\left\{e_1, \ldots, e_n\right\}$.
\begin{lemma}\label{Rigidity}
Let $(u,A)$ be a solution of the Abelian Yang--Mills--Higgs equation \eqref{eqn_AbYMH} in $\mathbb R^{n+2}$ so that: either $(u,A)$ is minimising, or $n+2\leq4$. Assume that there exists a sufficiently small constant $\sigma$ so that for all large $r$ we have the following small Excess estimate
	\begin{align}\label{eqn_SmallExcess}
	\mathbf{E}_1\left(u, \nabla^A, B_r(x), P_0\right):=\frac{r^{-n}}{2 \pi} \int_{B_r(x)}\left[\sum_{k=1}^n\left|\nabla^A_{e_k} u\right|^2+\e^2 \sum_{(j, k) \neq(n+1,n+2)} \omega^A(e_j,e_k)^2\right] \leq \sigma^2
	\end{align}
and density estimate
	\begin{align}\label{AsymptoticallyMinimalEnergy}
	\limsup_{r \rightarrow \infty} \frac{1}{r^n} \int_{B_r}\left(\left|\nabla_A u\right|^2+\e^2\left|F_A\right|^2+\frac{1}{\e^2} W(u)\right) \leq\left(1+\tau_0\right) 2\pi \omega_n.
	\end{align}
Then $(u,\nabla^A)$ is two dimensional - that is we have $(u,\nabla^A)=S^\perp\left(u_0, \nabla^{A_0}\right)$ up to a change of gauge, where $S^\perp$ is the orthogonal projection onto a two-dimensional subspace orthogonal to $S$ and $(u_0, \nabla^{A_0})$ is the standard degree-one solution of Taubes \cite{Taubes1980} (or its conjugate), centered at the origin. Moreover, we have
	\begin{align}\label{Uniqueness_of_limit}
	\operatorname{dist}_{\mathrm{Gr}(n,n+2)}(S,P_0)\leq C\sigma.
	\end{align}
\end{lemma}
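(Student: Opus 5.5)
The argument has two stages. First we identify $(u,A)$ through the rigidity result Theorem \ref{Theorem 1.9}. Then we locate the resulting plane $S$ by letting the excess hypothesis \eqref{eqn_SmallExcess} see the explicit energy distribution of the pulled-back vortex.

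\emph{Stage one: two-dimensionality.} The density bound \eqref{AsymptoticallyMinimalEnergy} is precisely the energy hypothesis of Theorem \ref{Theorem 1.9}, once $\tau_0$ is taken no larger than the constant there. By the monotonicity formula for the Abelian YMH equations (Pigati--Stern), the normalised energy ratio is nondecreasing in $r$, so the $\limsup$ is a limit. Theorem \ref{Theorem 1.9} also assumes $u(0)=0$, so we first check that $u$ has a zero. If $|u|>0$ everywhere, then on every large ball $u$ is gauge equivalent to a map with no winding. The standard vortex-free estimate (the $\varepsilon$-regularity of Pigati--Stern) then forces the energy ratio to tend to $0$. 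That contradicts the lower density bound coming from monotonicity together with the hypothesis that $(u,A)$ is a nontrivial solution. We take the hypotheses to encode this nontriviality, for instance through a density at least $2\pi\omega_n(1-\tau_0)$ at infinity, as in the setting of Theorem \ref{Curvature_Estimate}.

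With a zero in hand, we translate so that it sits at the origin; the conclusion is clearly translation invariant. In either case $n+2\le 4$ or $(u,A)$ minimising, Theorem \ref{Theorem 1.9} gives $(u,A)=P^*(u_0,A_0)$ up to gauge, where $P=S^\perp$ is the orthogonal projection onto a $2$-plane. This establishes the first assertion.

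\emph{Stage two: the tilt estimate \eqref{Uniqueness_of_limit}.} The excess $\mathbf{E}_1$ is gauge invariant, because $|\nabla^A_{e_k}u|$ and $F_A$ are. We may therefore compute it directly for $P^*(u_0,A_0)$. Decompose each $e_k$ for $k\le n$ as $e_k=e_k^S+e_k^\perp$, with $e_k^\perp=S^\perp e_k$. The pulled-back solution is constant along $S$, so
\begin{align*}
|\nabla^A_{e_k}u|^2=|\nabla^{A_0}_{e_k^\perp}u_0|^2\circ P .
\end{align*}
The curvature is $F_A=P^*F_{A_0}$, which is a multiple of the area form $\nu_1^*\wedge\nu_2^*$ of $S^\perp$. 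Hence the components $F_A(e_j,e_k)$ with $(j,k)\neq(n+1,n+2)$ are controlled by the minors of the projection of $P_0$ onto $S^\perp$.

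Integrating over $B_r$ and using Fubini along $S$ shows that $r^{-n}\int_{B_r}(\cdot)$ converges to $\omega_n$ times a planar integral. By the radial symmetry of the vortex, $\int_{\mathbb R^2}|\nabla^{A_0}_v u_0|^2=\pi|v|^2$ for every $v\in\mathbb R^2$, since the Dirichlet energy splits equally between two orthogonal directions. Therefore
\begin{align*}
\lim_{r\to\infty}\mathbf{E}_1\ \ge\ c\sum_{k\le n}|S^\perp e_k|^2=c\,\|P_{S^\perp}\circ P_{P_0}\|^2\ \ge\ c\,\operatorname{dist}_{\mathrm{Gr}(n,n+2)}(S,P_0)^2 ,
\end{align*}
for a constant $c>0$ depending only on the vortex profile. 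The curvature terms are nonnegative, so they can be discarded here. Combining this lower bound with $\mathbf{E}_1\le\sigma^2$ for all large $r$ gives \eqref{Uniqueness_of_limit}. The balls $B_r(x)$ are centred at $x$, but for fixed $x$ and $r\to\infty$ the translation only changes lower-order terms.

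\emph{Main obstacle.} The delicate point is stage one, namely reducing to the hypotheses of Theorem \ref{Theorem 1.9} by producing a zero of $u$. This requires a lower density bound, that is, excluding the trivial vacuum solution. I would first try to read this off from the setting in which the lemma is applied, namely the rescaled blow-ups with $|u(0)|\le 1-b$. There, the clearing-out lemma yields a nearby zero and a density bounded below.
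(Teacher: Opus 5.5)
Your proposal follows the paper's proof: two-dimensionality comes from the rigidity theorem of De Philippis--Halavati--Pigati, and \eqref{Uniqueness_of_limit} comes from substituting the pulled-back vortex into \eqref{eqn_SmallExcess}, which you carry out in more detail than the paper. One slip there is harmless: the planar constant is $\frac{|v|^2}{2}\int_{\mathbb R^2}|\nabla^{A_0}u_0|^2<\pi|v|^2$, because part of the vortex energy $2\pi$ is carried by $|F_{A_0}|^2$ and the potential, and only its positivity is used. Your concern about the hypothesis $u(0)=0$ of that theorem is justified: the paper's proof passes over it, and the vacuum $(u,A)=(1,0)$ satisfies every hypothesis of the lemma as stated. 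However, your no-winding argument for producing a zero is not supported by the Pigati--Stern estimates, which need a quantitative lower bound on $|u|$ rather than just $|u|>0$. The clean repair is simply to add $u(0)=0$ to the hypotheses; this is also what makes ``centered at the origin'' correct.
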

\begin{proof}
First notice that \eqref{AsymptoticallyMinimalEnergy} implies \eqref{eqn_SmallExcess} by \cite[Proposition 5.3]{Philippis2024}. Indeed, the rigidity part is proved in \cite[Theorem 1.9]{Philippis2024} only assuming \eqref{AsymptoticallyMinimalEnergy}.

And the estimate \eqref{Uniqueness_of_limit} can be seen by substituting the $2$-dimensional vortex solution $S^\perp\left(v_0, \nabla_0\right)$ to \eqref{eqn_SmallExcess} (cf. \cite[Lemma 12.1]{wang2014new}).
\end{proof}
By applying the above lemma to a sequence $\hat U_\e = \begin{pmatrix}\hat u_\e,\hat A_\e\end{pmatrix}^T$ which are rescalings of $\begin{pmatrix} u_\e, A_\e\end{pmatrix}^T$ so that $\hat U_\e$ satisfies equation \eqref{eqn_AbYMH} with $\e=1$, we get the following lemma

\begin{lemma}[cf. Section 12 in \cite{wang2014new}]\label{Lipschitz_bound_Level_Set}
For any $b\in(0,1)$, there exists $\e_0,R_0,K_0>0$, $\tau_0$ as in \eqref{AsymptoticallyMinimalEnergy}, so that the following holds : Let $U_\e = \begin{pmatrix} u_\e, A_\e\end{pmatrix}^T$ be a solution of the $\e$-Abelian Yang--Mills--Higgs equation \eqref{eqn_AbYMH} with Coulomb gauge condition $d^\star A_\e=0$ in $B_{R_0}\subset\mathbb R^{n+2}$ with $\e<\e_0$ satisfying $u_\e(0)=t$ where $|t|\leq b$ and
	\begin{align*}
	\frac{1}{R_0^n} \int_{B_{R_0}}\left(\left|\nabla_A u\right|^2+\e^2\left|F_A\right|^2+\frac{1}{\e^2} W(u)\right) \leq\left(1+\tau_0\right) 2\pi \omega_n.
	\end{align*}
Then up to a constant gauge transformation $\tilde U_\e=\begin{pmatrix} u_\e e^{i\theta_0}, A_\e\end{pmatrix}^T$, which we still denote by $U_\e$, is a solution with the following property: For any $X_0\in\{u_\e= u_\e(0)\}\cap B_1$ we have
	\begin{align*}
	\operatorname{dist}_{\mathrm{Gr}(n,n+2)}(N_\e(X),P_0)\leq \frac{1}{2},\quad X\in B_{K_0\e}(X_0)
	\end{align*}
where $N_\e$ denotes the normal plane to $\{u_\e= u_\e(0)\}$ at $X$.
\end{lemma}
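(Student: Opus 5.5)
We argue by contradiction and compactness, following the scheme of \cite[Section 12]{wang2014new}, with Lemma \ref{Rigidity} supplying the rigidity of the blow-up limit. Suppose the statement fails for some $b\in(0,1)$. Then for sequences $\e_k\to0$ and $R_k\to\infty$, $K_k\to\infty$ (chosen with $K_k\e_k\to 0$ but $R_k/\e_k\to\infty$ suitably), we find solutions $U_k=(u_k,A_k)^T$ in $B_{R_k}$ with $u_k(0)=t_k$, $|t_k|\le b$, satisfying the energy bound, and points $X_k\in\{u_k=u_k(0)\}\cap B_1$, $Y_k\in B_{K_k\e_k}(X_k)$, with $\operatorname{dist}(N_{\e_k}(Y_k),P_0)>\frac12$ for every constant gauge rotation.

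First, rescale around $X_k$ by setting $\hat U_k(x)=(u_k(X_k+\e_k x),\e_kA_k(X_k+\e_k x))^T$. This solves \eqref{eqn_AbYMH1} on $B_{R_k/\e_k-1}$, and the Coulomb condition is preserved under the rescaling.

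Next, we transfer the density bound. By the monotonicity formula for $E_\e$ from \cite{Pigati2019} (the almost-monotone $r^{-n}$-normalised energy), the ratio at scale $R_0$ around $0$ bounds the ratio at every scale $r\in[\e_k,R_0/2]$ around $X_k$. We get $(1+\tau_0)2\pi\omega_n$ up to a factor $(1+o(1))$ from moving the centre by $|X_k|\le1\ll R_0$; this is where $R_0$ large is used. Hence $\hat U_k$ satisfies the almost unit density bound at all scales $1\le r\le R_0/(2\e_k)$.

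Then we pass to the limit. The energy bounds together with the Coulomb gauge $d^\star\hat A_k=0$ give elliptic estimates, namely uniform $C^{k}_{loc}$ bounds on $(\hat u_k,\hat A_k)$ (standard for YMH in Coulomb gauge, cf.\ \cite{Pigati2019}). A subsequence therefore converges in $C^2_{loc}(\mathbb R^{n+2})$ to an entire solution $U_\infty$ with $|u_\infty(0)|=\lim|t_k|\le b<1$. By lower semicontinuity, $U_\infty$ obeys \eqref{AsymptoticallyMinimalEnergy}, and minimality is preserved in the limit if assumed.

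We then apply rigidity. Lemma \ref{Rigidity} (equivalently Theorem \ref{Theorem 1.9}) shows that $U_\infty$ is the pullback of the degree-one vortex under projection onto $S^\perp$, up to gauge. In particular $U_\infty$ is translation invariant along $S$, and since $|u_\infty(0)|<1$ the centre lies at bounded distance. To obtain $S$ close to $P_0$, we use the small-excess estimate \eqref{eqn_SmallExcess}. At scale $R_0$ it follows from the density bound via \cite[Proposition 5.3]{Philippis2024}, after an initial rotation fixing $P_0$ as the plane of best excess at scale $R_0$. This excess passes to the limit and gives $\operatorname{dist}(S,P_0)\le C\sigma<\frac14$ by \eqref{Uniqueness_of_limit}.

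Finally, we conclude. On the explicit vortex, every level set $\{u_\infty=s\}$ with $|s|\le b$ is a translate of $S$ and is non-degenerate, since $|Du_0|\ge c(b)>0$ on $\{|u_0|\le b\}$ with respect to the normal directions. Gauge-invariant quantities such as $|u|$ and the level-set normals converge in $C^1_{loc}$; a constant phase $e^{i\theta_0}$ aligns $u_\infty$ with the value $t$. By the implicit function theorem, the normal planes of $\{\hat u_k=t_k\}$ on $B_{K}(0)$, for any fixed $K$, converge to $S^\perp$. Hence $\operatorname{dist}(N,P_0)<\frac12$ on $B_{K_0}$ for $k$ large, contradicting the choice of $Y_k$. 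The constant $K_0$ is then fixed by running the argument with $K=K_0$ fixed rather than $K_k\to\infty$.

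The main obstacle is the compactness step. It requires uniform estimates up to gauge, which is why the Coulomb condition is imposed, together with the uniform control that makes the limiting plane $S$ close to the fixed $P_0$ rather than an arbitrary plane. I would handle the latter by choosing $P_0$ via the excess at scale $R_0$ and using the excess-decay result of \cite{DePhilippis2024} to propagate closeness down to scale $\e_k$.
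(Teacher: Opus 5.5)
Your blow-up scheme is the same as the third claim of the paper's proof: rescale by $\e_k$, extract an entire Coulomb-gauge limit, apply the rigidity of \cite{Philippis2024} via Lemma \ref{Rigidity}, and read off graphicality from the non-degeneracy of the vortex. There is a genuine gap, however, in the step that pins the limiting plane $S$ near $P_0$.

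The small excess with respect to $P_0$ at scale $R_0$ does not ``pass to the limit.'' After rescaling it sits at scale $R_0/\e_k\to\infty$, which $C^2_{loc}$ convergence does not see. The tilt of $U_\infty$ is determined instead by the excess of $U_k$ at scales $r\e_k$ with $r$ fixed. Small excess at one macroscopic scale gives no uniform control on how far the best-fitting plane rotates over the $\sim|\log\e_k|$ intermediate dyadic scales. The hypothesis \eqref{eqn_SmallExcess} of Lemma \ref{Rigidity} is precisely a fixed-plane bound at \emph{all} large $r$.

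Supplying this bound is most of the paper's argument (its first two claims), and it needs more than ``propagating closeness by excess decay.'' The paper proceeds in two steps.
\begin{itemize}
\item It iterates \cite[Theorem 1.4]{Philippis2024} only while the normalised excess $E_k$ stays above the threshold $K_E$; below the threshold one only has $\mathbf{E}_1\lesssim\e^2r^{-2}$. This yields the Morrey bound $\mathbf{E}_1(u_\e,\nabla^{A_\e},B_r(0),S_r)\le K_2\max(\e^2r^{-2},\delta_0^2r^\alpha)$ for $r\in(K_1\e,1)$.
\item Following \cite[Lemma 10.4]{wang2014new}, it sums the tilts between best planes at consecutive scales, each of order the square root of the excess. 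This produces a single plane $S_0$ with $\mathbf{E}_1(u_\e,\nabla^{A_\e},B_r(0),S_0)\le\sigma+K_4r^{\alpha/2}$ for all $r\in(K_3\e,1)$. Here $K_3$ must be large so that the contributions from scales near $\e$, roughly $\sum\e/r$, are small.
\end{itemize}
Only then does the rescaled limit satisfy \eqref{eqn_SmallExcess} for all large $r$, so that \eqref{Uniqueness_of_limit} gives $\operatorname{dist}(S,P_0)\le C\sigma$. You identify this as the main obstacle and name the right tool, but the proof is incomplete without this iteration and summation.

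A smaller point concerns gauge. Level-set normals are gauge invariant only for the zero set. For $t\neq0$ the set $\{u=t\}$ moves under non-constant gauge changes. In Coulomb gauge, the residual freedom between $U_\infty$ and the standard vortex is a harmonic function $\gamma$. A linear $\gamma$ with a tangential component already tilts $\{u_\infty=t\}$ by an amount proportional to $|t|$. So a single constant phase $e^{i\theta_0}$ does not suffice until you show $\gamma$ is constant.

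Relatedly, $d^\star A=0$ alone gives no uniform bound on $A$, since adding $d\gamma$ with $\gamma$ harmonic preserves both the Coulomb condition and the energy. The paper gets both the compactness and the constancy of $\gamma$ from the boundary condition $A(\nu)=0$. It does this through Proposition \ref{Coulomb_Regularity} and the Liouville argument in Theorem \ref{Intermediate_Level_Set_Bounds}, which is the source of the gradient lower bound \eqref{eqn_GradientLowerBound} used in its final step.
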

\begin{proof}
The proof of this lemma follows from the following three claims:
\begin{claim}[Morrey type bound]
Under the conditions of the lemma, for any $\delta_0>0$, there exists $K_1, K_2>0$ so that the following holds: for any $r\in(K_1\e,1)$, we can find a codimension $2$ plane $S_r$ so that
	\begin{align}
	\mathbf{E}_1(u_\e,\nabla^A_\e, B_r(0),S_r)\leq K_2\max(\e^2r^{-2},\delta_0^2r^\alpha),
	\end{align}
for some $\alpha\in(0,1)$.
\end{claim}
\begin{proof}
First by \cite[Proposition 5.3]{Philippis2024}, we can choose a sufficiently small $\tau_0$ so that up to a rotation there holds
	\begin{align*}
	\mathbf{E}_1(u_\e,\nabla^A_\e, B_1(0),P_0)<\delta_0^2.
	\end{align*}
For any $\rho\in(0,\rho_0(n))$ with $\rho_0(n)$ coming from \cite[Theorem 1.4]{Philippis2024}, and $k\geq0$, denote by $r_k=\rho^k$ and
	\begin{align*}
	E_k:=\min_{S\in\mathrm{Gr}(n.n+2)}\e^{-2}r_k^2\mathbf{E}_1(u_\e,\nabla^A_\e, B_{r_k}(0),S)
	\end{align*}
Again using \cite[Proposition 5.3]{Philippis2024} and scaling the $\e$ Abelian Yang--Mills--Higgs equation by $\frac{1}{K_1\e}$ to a $\frac{1}{K_1}$ equation, we can choose $K_1$ large enough so that $\forall r\in (K_1\e,1)$ there holds
	\begin{align*}
	\mathbf{E}_1(u_\e,\nabla^A_\e, B_r(0),P_r)<\delta_0^2,
	\end{align*}
for some codimension $2$ plane $P_r$. Namely we get for $r_k\in (K_1\e,1)$ that
	\begin{align*}
	E_k\leq\delta_0^2\e^{-2}r_k^2.
	\end{align*}
By the excess decay estimates \cite[Theorem 1.4]{Philippis2024}, for sufficiently small $\e$ there exists some constant $K_E$ so that the following holds: if $E_k\geq K_E$ then
	\begin{align*}
	E_{k+1}\leq C\rho^4E_k.
	\end{align*}
We let $k_1\in\mathbb N$ be the unique number satisfying $\rho^{k_1}\in[K_1\e,K_1\rho^{-1}\e)$ and $k_0\in\mathbb N$ be the smallest number so that $E_k\leq K_E\rho^{2-n}$ for $k>k_0$.

Now for any $k\in\mathbb N$ satisfying $k_0<k<k_1$ we get
	\begin{align}\label{Excess_below_threshold}
	\mathbf{E}_1(u_\e,\nabla^A_\e, B_{r^k}(0),P_{r^k})\leq K_E\rho^{2-n}\e^2r_k^{-2},
	\end{align}
where $P_{r^k}$ is the plane achieving the minimum for $E_k$.

On the other hand, suppose for some $k$ with $0\leq k\leq k_0$ satisfying $E_k\geq K_E\rho^{2-n}$, we get by the definition of $E_k$ that for such $k$
	\begin{align*}
	E_{k-1}\geq\rho^{n-2}E_k\geq \rho^{n-2}\cdot K_E\rho^{2-n } = K_E.
	\end{align*}
The excess decay is then applicable again, giving
	\begin{align*}
	E_k\leq C\rho^4E_{k-1},
	\end{align*}
improving the lower bound of $E_{k-1}$ to
	\begin{align*}
	E_{k-1}\geq \frac{1}{C\rho^4}E_k\geq E_k\geq K_E\rho^{2-n}.
	\end{align*}
This implies that for all $k\in\mathbb [0,k_0]$ that
	\begin{align*}
	E_k\geq K_E\rho^{2-n}.
	\end{align*}
And thus the excess decay estimate gives
	\begin{align*}
	E_k\leq (C\rho^4)^k E_0,
	\end{align*}
namely for any k such that $0\leq k\leq k_0$
	\begin{align}\label{Excess_above_threshold}
	\mathbf{E}_1(u_\e,\nabla^A_\e, B_{r^k}(0),P_{r^k})\leq \delta_0^2\e^2r_k^{-2}C^k\rho^{4k}\leq\delta_0^2K_1^{-2}C^kr_k^4=\delta_0^2K_1^{-2}r_k^{\frac{|\log(C\rho^4)|}{|\log\rho|}}.
	\end{align}
Now by choosing $\alpha = \frac{|\log(C\rho^4)|}{|\log\rho|}$, the claim then follows by combining \eqref{Excess_below_threshold} and \eqref{Excess_above_threshold} and interpolating $r\in[\rho^k,\rho^{k+1})$ for each $0\leq k\leq k_1$.
\end{proof}
Using the Morrey type bound, we have excess bounds up to scale $O(\e)$ with respect to a fixed reference plane.
\begin{claim}[Lemma 10.4 of \cite{wang2014new}]
Under the same conditions, for any $\sigma>0$, there exists $K_3, K_4>0$ and a codimension $2$ plane $S_0$ so that the following holds: for any $r\in(K_3\e,1)$ we have
	\begin{align}\label{Consequence_MorreyBound}
	\mathbf{E}_1(u_\e,\nabla^A_\e, B_r(0),S_0)\leq \sigma+K_4r^\frac{\alpha}{2}.
	\end{align}
\end{claim}
\begin{proof}
The proof follows the proof of Lemma 10.4 of \cite{wang2014new}.
\end{proof}
Using this claim, we get Lipschitz control of the level sets in the ball $B_{K_3\e}$ for sufficiently small $\e$.
\begin{claim}
For any $\sigma>0$, there exists $\e_0>0$ so that the following holds: If $\e<\e_0$ and $\mathbf{E}_1(u_\e,\nabla^A_\e, B_r(0),P_0)\leq \frac{\sigma^2}{2}$ for all $r\in(K_3\e,K_3\sqrt\e)$, then the level sets of $u$ in $B_{K_3\e}$ are uniform Lipschitz graphs over $P_0$ with Lipschitz norm bounded by $\frac{1}{2}$.
\end{claim}
\begin{proof}
Using the notation from the previous section, we consider the following sequence of solutions $\hat U = (\hat u_\e,\hat A_\e)$ obtained by rescaling $(u_\e,A_\e)$ by $\frac{1}{\e}$. The new sequence satisfies the Abelian Yang--Mills--Higgs equation with $\e=1$ as well as the Coulomb gauge condition.

By choosing $\frac{\sigma^2}{2}$ in place of $\sigma$ in the previous claim and choosing $\e_0$ so that $K_4\cdot(K_3\sqrt{\e_0})^\frac{\alpha}{2}<\frac{\sigma^2}{2}$, we get by \eqref{Consequence_MorreyBound} that
 	\begin{align*}
		\mathbf{E}_1\left(\hat u_\e,\nabla^{\hat A_\e}, B_r(0),S_0\right)\leq \sigma^2,\quad\forall r\in\left(K_3,\frac{K_3}{\sqrt\e}\right).
		\end{align*}
We now proceed to a proof by contradiction. Let us assume that the Claim does not hold, we can take $\e\rightarrow0$ and obtain an entire solution which satisfies the conditions of Lemma \ref{Rigidity} (which up to a rotation the plane can be chosen as $P_0$). Since the gradient matrices are uniformly invertible \eqref{eqn_GradientLowerBound}, the level sets are uniformly graphical over the flat codimension $2$ plane $P_0$ and converge to $P_0$ in $C^k$ for any $k$, which gives a contradiction.

Since the Lipschitz norm is scale-invariant, we have the level sets of the unscaled $u_\e$ may be written as graphs with bounded Lipschitz norms over the plane $P_0$. This directly implies Lipschitz regularity for the level sets $\{u_\e=t\}$ of the unscaled solutions $\begin{pmatrix} u_\e, A_\e\end{pmatrix}^T$ with $|t|\leq b$.
\end{proof}
The proof of the Lemma is then completed by choosing $K_0=K_3$ from the previous claim.
\end{proof}
\begin{proof}[Proof of Lipschitz regularity]
We note the rescaled solutions $\hat U_\e$ of $U_\e$ also satisfy the Coulomb gauge condition $d^\star\hat A_\e=0$ and have level sets $\begin{pmatrix} \hat u_\e , \hat A_\e\end{pmatrix}^T$ that converge in $C^1$ to $P_0$. Since the $C^1$ norm is scale invariant, we have that the level sets of the unscaled solutions $\begin{pmatrix} u_\e, A_\e\end{pmatrix}^T$ also converge in $C^1$ to the plane $P_0$.
\end{proof}

\section{Curvature Estimates}\label{CurvatureBound_Section}
We can improve the Lipschitz regularity of nodal sets to a $C^{2,\alpha}$ regularity using similar ideas to \cite[Section 5]{Chodosh2018} for the Allen--Cahn case. This gives a curvature estimates and $\e$ - regularity for Abelian Yang--Mills--Higgs. This is an analogues of \cite{Wang2019} for the multiplicity $1$ convergence in the Abelian YMH setting.
\begin{theorem}\label{Curvature_Estimate_Weak}
Let $\begin{pmatrix}u_\e\\A_\e\end{pmatrix}$ be a sequence of solutions to \eqref{eqn_AbYMH} such that their nodal sets $M_\e$ are graphs $M_\e=\{(x_1,\dots, x_n, f_{1,\e}(x_1,\dots, x_n), f_{2,\e}(x_1,\dots, x_n))\}$ in the cylindrical region $C^{n+2}_2(0)=B^n_2(0)\times \mathbb R^2=\{x_1^2+\dots+x_n^2<4\}\subset\mathbb R^{n+2}$ with $f_{i,\e}$ uniformly bounded in Lipschitz norms for $i=1,2$. If $M_\e$ are converging to the codimension $2$ plane $\{z=(x_{n+1},x_{n+2})=0\}$ (i.e. $f_{i,\e}\rightarrow0$ in Lipscthiz norms). Then the nodal sets are uniformly bounded as $C^{2,\alpha}$ graphs in the cylindrical region $C^{n+2}_1(0)B^n_1(0)\times \mathbb R^2$ with 
\begin{align*}
\|f_{i,\e}\|_{C^{2,\alpha}(B^n_1(0))}\leq C,\quad i=1,2,
\end{align*}
where $C$ is independent of $\e$.
\end{theorem}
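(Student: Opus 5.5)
The plan is to reduce Theorem \ref{Curvature_Estimate_Weak} to Theorem \ref{main}. Theorem \ref{main} (through Section \ref{Improved_Regularity}) already upgrades Lipschitz-graphical convergence to uniform $C^{2,\alpha}$ bounds, provided the nodal sets carry an a priori curvature bound, i.e.\ second fundamental form $O(1)$ before rescaling and $O(\e)$ after rescaling by $\e^{-1}$. That bound is what the remark in Section \ref{ApproximateSolutions} assumed via a point-picking argument. So the real content is a blow-up argument producing a uniform bound on $|\mathcal A_{M_\e}|$ in $C_{3/2}^{n+2}(0)$, in the spirit of \cite[Section 5]{Chodosh2018} and \cite{Wang2019}.

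\textbf{Step 1: reduction.} Suppose, by contradiction, that $\sup_{M_\e\cap C_{3/2}}|\mathcal A_{M_\e}|\to\infty$ along a subsequence. Use the standard point-picking: maximise $\mathrm{dist}(x,\partial C_{7/4})\,|\mathcal A_{M_\e}|(x)$ over $x\in M_\e$. This gives points $x_\e$ and scales $\lambda_\e=|\mathcal A_{M_\e}|(x_\e)\to\infty$ such that $|\mathcal A|\le 2\lambda_\e$ on $B_{R/\lambda_\e}(x_\e)\cap M_\e$, for radii $R\to\infty$.

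\textbf{Step 2: rescaling.} Rescale by $\lambda_\e$ around $x_\e$. The rescaled pair solves \eqref{eqn_AbYMH} with parameter $\e'=\lambda_\e\e$. The rescaled nodal sets have $|\mathcal A|\le2$, $|\mathcal A|(0)=1$, and remain Lipschitz graphs with constant tending to $0$, since the Lipschitz norm is scale invariant.

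\textbf{Step 3: two cases.} There are two regimes for $\e'=\lambda_\e\e$.

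\emph{Case $\e'\to0$.} The rescaled solutions have nodal sets with curvature bounded by $2$ in balls of radius $R\to\infty$ and Lipschitz-converging to a plane. After the further $1/\e'$ rescaling their curvature is $O(\e')$. These are exactly the hypotheses of Section \ref{Improved_Regularity}. Propositions \ref{Holder_Phi} and \ref{Holder_Delta_h_plus_H}, and the iteration in the proof of Theorem \ref{main}, then give $\|H\|_{C^{0,\alpha}}\le C\e'^{1-\alpha}\to0$ together with uniform $C^{2,\alpha}$ bounds on unit balls. By Arzel\`a--Ascoli the nodal sets converge in $C^2$ to a graph over the plane with Lipschitz constant $0$, which is therefore the plane itself. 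This contradicts $|\mathcal A|(0)=1$.

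\emph{Case $\e'\ge c>0$.} Here the curvature blows up at scale comparable to $\e$. After rescaling by $1/\e$ instead, one gets solutions with $\e=1$. By interior elliptic estimates for $L_{1,W}$ in Coulomb gauge, these converge smoothly to an entire solution whose nodal set is a Lipschitz graph with zero Lipschitz constant, i.e.\ a flat plane. The nondegeneracy $|Du|\ge c$ near the nodal set, which is the invertible gradient used in the Lipschitz claim of Section \ref{Lipschitz_Section}, holds for the limit. The implicit function theorem then gives smooth convergence of the nodal sets, so the curvature tends to $0$, again a contradiction.

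\textbf{Step 4: conclusion.} With $|\mathcal A_{M_\e}|\le C$ on $C_{3/2}$, apply Theorem \ref{main} (equivalently, the mean curvature estimate \eqref{Mean_Curvature_Equation_Unscaled} plus Schauder theory for the quasilinear graph system with small gradient) to obtain $\|f_{i,\e}\|_{C^{2,\alpha}(B_1^n)}\le C$.

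\textbf{Main obstacle.} I expect the hardest step to be the second case. Knowing only that the limit's nodal set is flat does not immediately give $C^2$ convergence of nodal sets, because the convergence of solutions may be only local. The first thing to try is to use the Lipschitz-graph hypothesis together with the energy bound and Theorem \ref{Theorem 1.9} to identify the limit as the standard vortex; its nondegeneracy then upgrades the convergence. A second difficulty is that the intermediate regime must be covered carefully. I would handle it by working with $\lambda_\e\e$ bounded versus unbounded after passing to a subsequence.
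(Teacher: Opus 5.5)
Your proposal follows essentially the same route as the paper: first reduce to a uniform curvature bound via Section \ref{Improved_Regularity}, then blow up at points of near-maximal curvature and split on $\lambda_\e\e$. When $\lambda_\e\e\to0$, the improved regularity forces the rescaled nodal sets to converge in $C^2$ to a plane. When $\lambda_\e\e\not\to0$, the $\e$-scale limit is identified as the two-dimensional vortex, whose level sets are flat, and this contradicts the curvature normalisation. Your distance-weighted point-picking is more careful than the paper's, and in the second case the paper gets rigidity from Theorem \ref{Intermediate_Level_Set_Bounds}. Like your appeal to Theorem \ref{Theorem 1.9}, that theorem uses the almost-unit energy bound and the dimension or minimising assumption, neither of which appears in the hypotheses of the statement.
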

\begin{proof}
By the arguments in Section \ref{Improved_Regularity}, we see that if the nodal sets $M_\e$ are Lipschitz and uniformly bounded in $C^2$ norm, then they are uniformly bounded in $C^{2,\alpha}$ norm. So we only need to show that the curvature ($C^2$ norm) of the nodal sets $M_\e$ are uniformly bounded in the in the cylindrical region $C^{n+2}_1(0)$.

Suppose not, then there exists a sequence of points $x_i\in M_\e\cap C^{n+2}_1(0)$ such that the second fundamental form of nodal sets at $x_i$ is $\lambda_i=A(x_i)\rightarrow\infty$. The points can be chosen so that $\lambda_i\geq\frac{1}{2}\sup_{x\in M_\e\cap C^{n+2}_1(0)}A(x)$. We know by elliptic regularity that $\lambda_i\e_i\leq C$ for some uniform constant $C$.

Case 1: If $\lambda_i\e_i\rightarrow0$, then we considered a sequence of rescaled solutions 
\begin{align*}
\begin{pmatrix}v_i(x)\\B_i(x)\end{pmatrix}=\begin{pmatrix}u_\e(\frac{x-x_i}{\lambda_i})\\\frac{1}{\lambda_i}A_\e(\frac{x-x_i}{\lambda_i})\end{pmatrix}
\end{align*} 
satisfying the equation \eqref{eqn_AbYMH} with $\lambda_i\e_i$ in place of $\e$, and the second fundamental form satisfies
	\begin{align*}
	A_{v_i}(0)&=1,\\
	A_{v_i}|_{C^{n+2}_1(0)}&=2.
	\end{align*}
Since $f_{i,\e}$ are uniformly Lipschitz and the Lipscthiz norm is scale invariant, the limit of the nodal sets the rescaled sequence also converge to a flat codimension 2 plane, and the convergence is in $C^{2,\alpha}$ by the improved regularity in Section \ref{Improved_Regularity}. This is a contradiction to  $A_{v_i}(0)=1$ for large enough $i$.

Case 2: if $\lambda_i\e_i$ is bounded and we can extract a subsequence (which we still index by $i$) such that $\lambda_i\e_i\rightarrow A_0>0$. We see that the rescaled sequence $\begin{pmatrix}v_i(x)\\B_i(x)\end{pmatrix}$ will converge to an entire solution on $\mathbb R^{n+2}$. And by Theorem \ref{Intermediate_Level_Set_Bounds}, we see the limit solutions must be the 2-dimensional degree 1 vortex solutions up to a rotation with Coulomb gauge condition. However, by rescaling, the second fundamental form of the rescaled nodal sets at $0$ converges to $A_0>0$, contradicting the fact the 2-dimensional degree 1 solution has flat level sets.

So the nodal sets must be uniform $C^{2,\alpha}$ graphs of $f_{i,\e}$ over the codimension $2$ plane $\{z=(x_{n+1},x_{n+2})=0\}$ for $i=1,2$ .
\end{proof}
\begin{remark}
Theorem \ref{Curvature_Estimate_Weak} holds for any dimension and without minimising condition. In particular, if we can improve $C^2$ regularity to $C^{2,\alpha}$ regularity, then we can improve Lipschitz regularity to $C^{2,\alpha}$ regularity. (See also the Allen-Cahn analogue in \cite{Chodosh2018})
\end{remark}
\begin{proof}[Proof of Theorem \ref{Curvature_Estimate}]
We have obtained uniform Lipschitz regularity in Section \ref{Lipschitz_Section} under the condition of Theorem \ref{Curvature_Estimate} that $n+2\leq 4$ or the solution is minimizing, so Theorem \ref{Curvature_Estimate} follows by Theorem \ref{Curvature_Estimate_Weak}.
\end{proof}

\appendix
\section{Coulomb Gauge}
As explained in the introduction, due to the infinite dimensional gauge invariance, the equation \eqref{eqn_AbYMH} is not elliptic and does not have interior estimates. However, under the additional constraint
	\begin{align}\label{Coulomb}
	d^\star A=0,
	\end{align}
which is called the Coulomb gauge condition, it is elliptic and satisfies the following interior estimates.
\begin{proposition}[c.f. Proposition A.1 of \cite{Pigati2019}]\label{Coulomb_Regularity}

For any $\Lambda>0$, $R>1$, and $k\in\mathbb N, \alpha\in(0,1]$, there exists $C(\Lambda, R, k, \alpha)>0$ such that the following hold:

Suppose $\begin{pmatrix}u_\e\\A_\e\end{pmatrix}$ is a solution to \eqref{eqn_AbYMH} in $B_R(0)\subset\mathbb R^{n+2}$ with
	\begin{align*}
	E_\e\begin{pmatrix}u_\e\\A_\e\end{pmatrix}(B_R(0))\leq\Lambda.
	\end{align*}
If it satisfies the Coulomb gauge condition \eqref{Coulomb} and the boundary condition $A(\nu)|_{\partial B_R(0)}=0$ where $\nu$ is the outer normal to $\partial B_R(0)$, then
	\begin{align}\label{EnergyBound_BR}
	\|\e^{k+\alpha} u\|_{C^{k,\alpha}(B_1(0))}+\|\e^{k+\alpha+1}A\|_{C^{k,\alpha}(B_1(0))}\leq C.
	\end{align}
\end{proposition}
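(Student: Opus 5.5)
The estimate \eqref{EnergyBound_BR} is the $\e$-scaled form of a statement at $\e=1$, so I would first rescale to unit size and then run a bootstrap on the elliptic system that the Coulomb gauge produces.

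\emph{Rescaling.} Set $\hat u(x)=u_\e(\e x)$ and $\hat A(x)=\e A_\e(\e x)$ on $B_{R/\e}(0)$. Then $(\hat u,\hat A)$ solves \eqref{eqn_AbYMH1}. The conditions $d^\star\hat A=0$ and $\hat A(\nu)=0$ on the boundary still hold, and the energy scales as $E_1(B_{R/\e})=\e^{-n}E_\e(B_R)$. The bound \eqref{EnergyBound_BR} is therefore equivalent to a uniform $C^{k,\alpha}$ bound for $(\hat u,\hat A)$ on unit balls $B_1(x_0)$ with $x_0\in B_{1/\e}$. So it suffices to prove unit-scale interior estimates that depend only on the local energy on $B_2(x_0)$. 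The monotonicity formula of Pigati--Stern \cite{Pigati2019} bounds this local energy by $C(R)\Lambda$.

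\emph{Unit-scale estimates.} These come in four steps.

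\begin{enumerate}
\item Apply the maximum principle to $|\hat u|^2$. From the first equation, $\Delta|\hat u|^2\ge 2|\nabla^{\hat A}\hat u|^2-(1-|\hat u|^2)|\hat u|^2$, so $|\hat u|\le1$ once we know $|\hat u|\le 1$ on the boundary (or after a standard truncation argument).
\item Use the Coulomb gauge. With $d^\star\hat A=0$ we have $d^\star d\hat A=\Delta_H\hat A$, the Hodge Laplacian. The second equation becomes $\Delta\hat A=\langle\nabla\hat u,\imath\hat u\rangle-|\hat u|^2\hat A$. Expanding $\nabla^{\hat A}$, the first equation becomes $-\Delta\hat u=-2\imath\hat A\cdot\nabla\hat u-|\hat A|^2\hat u+\frac12(1-|\hat u|^2)\hat u$, where the $d^\star\hat A$ term has dropped out. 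Together these form a semilinear elliptic system with a quadratic gradient nonlinearity.
\item Obtain a global $W^{1,2}$ bound for $\hat A$ on the rescaled ball. The boundary condition $\hat A(\nu)=0$ together with $d^\star\hat A=0$ gives Gaffney's inequality $\|\hat A\|_{W^{1,2}}\le C\|d\hat A\|_{L^2}$; this is exactly where the boundary condition is needed. Since $\|d\hat A\|_{L^2}$ is controlled by the energy, we get $W^{1,2}$ control of $\hat A$, and the $L^2$ bound on $\nabla^{\hat A}\hat u$ then gives $W^{1,2}$ control of $\hat u$.
\item Bootstrap. Start from $|\hat u|\le1$ and $\hat A\in W^{1,2}$. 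The term $\langle\nabla\hat u,\imath\hat u\rangle$ lies in $L^2$, so $\hat A\in W^{2,2}$, and Calderón--Zygmund gives $\hat A\in W^{2,p}$ for increasing $p$ by Sobolev embedding. Then $\hat A\cdot\nabla\hat u$ gains integrability, so $\hat u$ improves. After finitely many steps both are in $C^{1,\alpha}$, and Schauder estimates give $C^{k,\alpha}$ for all $k$, with constants depending only on $(\Lambda,R,k,\alpha)$.
\end{enumerate}

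\emph{Main obstacle.} The hard part is the bootstrap in high dimension $n+2$. The quadratic term $\hat A\cdot\nabla\hat u$ with only $W^{1,2}$ data is critical or supercritical, so naive Sobolev iteration can stall. To break this I would use the gauge-invariant quantities first. Bochner-type inequalities for $e=|\nabla^{\hat A}\hat u|^2+|F_{\hat A}|^2+\frac14(1-|\hat u|^2)^2$ show that $e$ is subharmonic up to a multiple of itself. Mean-value estimates combined with the local energy bound then give $\sup_{B_1}(|\nabla^{\hat A}\hat u|+|F_{\hat A}|)\le C$. Once this $L^\infty$ curvature bound is in hand, the Coulomb-gauge Hodge system for $\hat A$ ($d\hat A=F$ bounded, $d^\star\hat A=0$) yields $\hat A\in C^{0,\alpha}$ with bounds. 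The bootstrap then starts in the subcritical regime and proceeds without difficulty.

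Undoing the scaling finally gives the powers $\e^{k+\alpha}$ and $\e^{k+\alpha+1}$ in \eqref{EnergyBound_BR}.
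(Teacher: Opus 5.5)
The step that fails is the one you use to escape the supercritical regime. A Bochner formula does \emph{not} make the gauge-invariant density $e=|\nabla^{\hat A}\hat u|^2+|F_{\hat A}|^2+\frac14(1-|\hat u|^2)^2$ subharmonic up to a multiple of itself. From $dF_{\hat A}=0$, $d^\star F_{\hat A}=\langle\nabla^{\hat A}\hat u,\imath\hat u\rangle$ and $[\nabla^{\hat A}_a,\nabla^{\hat A}_b]=-\imath F_{ab}$ one gets $\Delta F_{ab}=|\hat u|^2F_{ab}-2\langle\nabla^{\hat A}_b\hat u,\imath\nabla^{\hat A}_a\hat u\rangle$. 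The commutator in $\Delta|\nabla^{\hat A}\hat u|^2$ produces a term of the same type. So both pieces contain the cubic expression $F_{ab}\langle\nabla^{\hat A}_b\hat u,\imath\nabla^{\hat A}_a\hat u\rangle$, and the best you get is $\Delta e\ge-C(e+e^{3/2})$. Treating $C(1+e^{1/2})$ as a potential, Moser iteration needs $e^{1/2}\in L^q_{loc}$ with $q>\frac{n+2}{2}$. That is more than the energy bound gives once $n+2\ge4$, and there is no smallness available to run an $\e$-regularity argument instead.

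The missing idea is the paper's. In Coulomb gauge the gauge-\emph{dependent} quantity $|\hat A|^2+|\hat u|^2$ (that is, $\e^2|A|^2+|u|^2$ before rescaling) satisfies a \emph{linear} inequality, because the only cross term can be absorbed:
\begin{align*}
\tfrac12\Delta\big(|\hat A|^2+|\hat u|^2\big)\ge|\nabla\hat A|^2+|\nabla\hat u|^2+2|\hat u|^2|\hat A|^2-3|\hat A||\hat u||\nabla\hat u|-\tfrac12|\hat u|^2\ge-C\big(|\hat A|^2+1\big)
\end{align*}
whenever $|\hat u|\le C$. Moser then bounds $\sup|\hat A|$ by a local $L^2$ norm, the $\hat u$-equation has bounded drift, and the bootstrap is subcritical from the start.

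Alternatively, the Pigati--Stern pointwise bound $\e|F_A|\le\frac{1-|u|^2}{2\e}$ would linearise your inequality. It is also exactly what the sharp $r^{-n}$ monotonicity you quote rests on, so for a solution given only on $B_R$ you must make sure an interior version is available. Relatedly, $|\hat u|\le1$ does not follow from the maximum principle without boundary data. Use instead the interior Keller--Osserman bound coming from $\Delta|\hat u|^2\ge(|\hat u|^2-1)|\hat u|^2$.

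Second, the conclusion concerns the specific gauge $A$. Your Hodge step ($d\hat A=F_{\hat A}$ bounded, $d^\star\hat A=0$) only determines $\hat A$ on $B_2(x_0)$ up to $d\gamma$ with $\gamma$ harmonic. So it needs a uniform bound on $\|\hat A\|_{L^2(B_2(x_0))}$, as does the Moser step above. Your step 3 does not give one. On $B_{R/\e}$, Gaffney yields $\|\nabla\hat A\|^2_{L^2(B_{R/\e})}\le C\|d\hat A\|^2_{L^2(B_{R/\e})}\le C\e^{-n}\Lambda$. Via Sobolev this gives only $\|\hat A\|_{L^2(B_2(x_0))}\lesssim\e^{-n/2}$, i.e.\ $\e|A|\lesssim\e^{-n/2}$, so the constants in your bootstrap depend on $\e$. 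What is required is a scale-$\e$ density bound $\int_{B_{2\e}(y)}|A|^2\le C\e^{n}$ for $y\in B_1$. In other words, you need a quantitative mechanism by which the far-away condition $A(\nu)|_{\partial B_R}=0$ controls the harmonic part of the gauge at scale $\e$. The paper's one-line Moser-plus-Gaffney estimate passes over this same scaling and defers the rest to Pigati--Stern, so on this point your sketch is no less complete than the paper's. The first point, however, is a step that genuinely fails.
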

\begin{proof}
By the standard Bochner-Weitzenb\"ok formula, we obtain (c.f.\cite[(A.11)]{Pigati2019})
	\begin{align*}
	\frac{1}{2}\Delta(\e^2|A_\e|^2+|u_\e|^2)\geq-C(\e^2|A_\e|^2+|u_\e|^2)-C.
	\end{align*}
The Moser iteration then gives
	\begin{align*}
	\e^2\|A_\e\|^2_{L^\infty(B_1)}&=C(\Lambda, R)(1+\e^2\|A_\e\|^2_{W^{1,2}(B_R)})\\
	&=C(\Lambda, R)(1+\e^2\|dA_\e\|^2_{L^2(B_R)})\\
	&=C(\Lambda, R)(\Lambda+1),
	\end{align*}
by the energy bound \eqref{EnergyBound_BR}. The rest of the proof follows as that in \cite[Proposition A.1]{Pigati2019}.

\end{proof}
We note that the $2$-d vortex solution \eqref{eqn_SecondOrderODE2D} with degree one is known as Bogomolny equation and it satisfies the Coulomb gauge condition \eqref{Coulomb}. We get a rigidity for this degree 1 vortex solution under Coulomb gauge and have the following gradient lower bound for solutions to the $\e$-equation \eqref{eqn_AbYMH} with almost unit density in ambient dimension $n+2\leq4$.




\begin{theorem}\label{Intermediate_Level_Set_Bounds}
There is a dimensional constant $\tau_0(n)>0$ for $n+2\leq4$ so that for any $b\in(0,1)$, there exists $\e_b>0$ large enough and $c_b>0$ with the following property: If $\begin{pmatrix}u_\e\\ A_\e\end{pmatrix}$ is a solution to \eqref{eqn_AbYMH} in $B_2\subset\mathbb R^{n+2}$, and the following hold
	\begin{align}\label{Condition_Intermediate_Layer}
	\begin{split}
		|u_\e(0)|&\leq1-b\\
	\frac{E_1(u_\e,_\e A)(B_2^{n+2})}{|B_2^n|}&\leq2\pi+\tau_0\\
	A(\nu)|_{\partial B_2}&=0,
	\end{split}
	\end{align}
then up to a rotation in $\mathbb R^{n+2}$,
 	\begin{align}\label{Convergence_to_vortex}
		\begin{pmatrix}\hat u_\e(x)\\ \hat A_\e(x)\end{pmatrix}=\begin{pmatrix}u_\e(\e x)\\ \e A_\e(\e x)\end{pmatrix}\rightarrow\begin{pmatrix}u_0(x_{n+1},x_{n+2})\\ A_0(x_{n+1},x_{n+2})\end{pmatrix}
	\end{align}
	in $C^{2,\alpha}_{loc}$, where $\begin{pmatrix} u_0\\ A_0\end{pmatrix}$ is the 2-d vortex solution given by \eqref{eqn_2DSolution}.

	As a consequence, we get that gradient matrix is invertible (with a lower bound on the norm of determinant) and the curvature of the level set at origin is bounded above.
	\begin{align}\label{eqn_GradientLowerBound}
	&|\nabla u_\e(0)|\geq c_b\\
	&\Pi_{\{u=u(0)\}}(0)\leq C_b
	\end{align}
\end{theorem}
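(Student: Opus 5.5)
The argument is a compactness-and-contradiction scheme. Rescaling turns the hypotheses into those of the rigidity Theorem \ref{Theorem 1.9}. Elliptic regularity in Coulomb gauge then upgrades subsequential convergence to $C^{2,\alpha}_{loc}$.

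\textbf{Step 1: rescaling and uniform estimates.} Set $(\hat u_\e,\hat A_\e)(x)=(u_\e(\e x),\e A_\e(\e x))$. These solve \eqref{eqn_AbYMH1} on $B_{2/\e}$, and the normalised energy is scale-invariant, so $R^{-n}\int_{B_R}e_1(\hat u_\e,\hat A_\e)$ is controlled for $R\le 2/\e$. By monotonicity of the normalised energy (Pigati--Stern), this ratio is bounded by roughly $(2\pi+\tau_0)|B^n_1|$ on every ball $B_R(0)$ with $R\le 1/\e$. The Coulomb condition $d^\star A=0$ together with $A(\nu)=0$ is preserved under rescaling. Proposition \ref{Coulomb_Regularity}, applied at $\e=1$ on balls of fixed radius, then gives uniform $C^{k,\alpha}$ bounds on $(\hat u_\e,\hat A_\e)$ on compact sets. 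If the boundary condition is only available on $\partial B_2$, I would first re-gauge on each ball $B_R$ to a Coulomb gauge with zero normal component (a Neumann problem for $\gamma$). This is harmless, since the conclusion concerns gauge-invariant data or is stated up to gauge.

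\textbf{Step 2: compactness and identification of the limit.} Suppose the convergence fails along a sequence $\e_j\to0$. Arzel\`a--Ascoli extracts a subsequence converging in $C^{2,\alpha}_{loc}(\mathbb R^{n+2})$ to an entire solution $(u,A)$ of \eqref{eqn_AbYMH1}. By lower semicontinuity its density at infinity is at most $2\pi+\tau_0$, and $|u(0)|\le 1-b$.

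Theorem \ref{Theorem 1.9} needs $u(0)=0$, so I translate. Using $|u(0)|\le1-b$ and the clearing-out/density lower bound (energy concentrates where $|u|<1$), I find a zero $x_0$ of $u$ within a distance $R_b$ of the origin. The translated limit then satisfies the hypotheses of Theorem \ref{Theorem 1.9}, which applies since $n+2\le4$. Hence $(u,A)=P^*(u_0,A_0)$, up to gauge, rotation and the translation by $x_0$.

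The remaining mismatch is that \eqref{Convergence_to_vortex} asserts convergence to the centred vortex. I expect to have to phrase the conclusion modulo a bounded translation, or else absorb the translation by recentring at the nodal point. The gauge ambiguity should be removable: two Coulomb gauges that are both $C^{2,\alpha}$ limits differ by a harmonic $\gamma$ of controlled growth. Liouville then forces $\gamma$ to be affine, and the orthogonality and boundary normalisation make it constant. This contradicts the assumed failure, so the convergence holds.

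\textbf{Step 3: consequences.} For the vortex $u_0$, $Du_0(0)$ is a nondegenerate $2\times2$ Jacobian. More generally, on $\{|u_0|\le1-b\}$ the normal Jacobian satisfies $\det\ge c(b)>0$ because $f'>0$ there. By $C^1$ convergence the rescaled gradient matrix is invertible, giving $|\nabla\hat u_\e(0)|\ge c_b$; note this is the rescaled gradient, so the paper's \eqref{eqn_GradientLowerBound} should be read at scale $\e$. The level set $\{\hat u=\hat u(0)\}$ is then, by the implicit function theorem, a $C^{2,\alpha}$ graph close to a flat plane, with second fundamental form bounded by $C_b$.

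\textbf{Main obstacle.} The delicate point is Step 2, where compactness must be compatible with the gauge and translation normalisation. Specifically, this means producing a zero of $u$ near the origin and ensuring that the Coulomb normalisations of the sequence select the standard gauge of $(u_0,A_0)$, so that convergence holds for the fields themselves and not merely gauge-invariant quantities.
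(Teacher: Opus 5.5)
Your proposal follows the paper's proof: a contradiction sequence $\varepsilon_j\to0$, uniform bounds for the rescaled pairs from the Coulomb-gauge estimate (Proposition \ref{Coulomb_Regularity}), a $C^{2,\alpha}_{loc}$ limit, identification by the quoted rigidity theorem, Liouville for the residual gauge, and the consequences from $f'>0$ on $\{f\le 1-b\}$. Where you deviate, you are more careful than the paper.

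\emph{Recentring.} The paper feeds a limit with only $|u(0)|\le 1-b$ into a rigidity theorem that assumes $u(0)=0$, and then claims convergence to the \emph{centred} vortex. Your conclusion ``modulo a bounded translation'' is the correct one, because the statement as written is false. Take the vortex translated by $\varepsilon y$ in the $(x_{n+1},x_{n+2})$-plane, with $0<f(|y|)\le 1-b$. Re-gauge it by a harmonic $\psi$ so that $A(\nu)=0$ on $\partial B_2$; here $d\psi=O(\varepsilon)$ on $B_1$, which is invisible after rescaling. This pair meets every hypothesis, yet its rescalings converge to the vortex centred at $y$, whose zero set misses the origin. The gradient and curvature consequences still survive, since $\det Du_0=ff'/r$ is bounded below on $\{f\le 1-b\}$.

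\emph{Density and gauge.} Your use of monotonicity is what actually supplies the density bound at infinity that the rigidity theorem needs. The paper's rescaled energy hypothesis on $B_2$ is not what the assumption gives. Likewise, Liouville with bounded $d\gamma$ gives $\gamma$ affine, not constant as the paper asserts.

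Two of the steps you add are not yet arguments, although the paper has the same holes.

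\emph{Existence of a zero.} Clearing-out only says there is energy near $0$; it reproduces $|u|<1$ there but does not produce a zero. You need a topological input to get \emph{some} zero of the limit, e.g.\ degree $\pm1$ of $u/|u|$ on small normal circles around the multiplicity-one blow-down plane. Some zero suffices. Applying the rigidity theorem there makes $\{u=0\}$ an $n$-plane, and $|u(0)|\le 1-b$ then places it within $r_b$ of the origin, where $f(r_b)=1-b$.

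\emph{Removing the affine gauge.} No orthogonality condition is available in this theorem. The normalisation $\hat A_{\varepsilon_j}(\nu)=0$ on $\partial B_{2/\varepsilon_j}$ is lost in a local limit. So nothing yet excludes $d\gamma=c\neq 0$. This has to be settled at the original scale, using that on $B_2$ the Coulomb potential with $A(\nu)=0$ is determined by $F_{A_\varepsilon}$. A constant $\gamma$ is harmless, since it is a rotation of the $(x_{n+1},x_{n+2})$-plane.

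\emph{Re-gauging.} Drop the ball-by-ball re-gauging in Step 1. It destroys the single gauge in which the conclusion is stated, so it is not harmless. Instead, apply Proposition \ref{Coulomb_Regularity} once on $B_2$ at the original scale and rescale, as the paper does. This gives uniform bounds on $B_{1/\varepsilon}$ in the original gauge.
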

\begin{proof}
We note that a 2-dimensional vortex solution $\begin{pmatrix}\hat u_0\\ \hat A_0\end{pmatrix}$ (defined as \eqref{eqn_2DSolution}) satisfies the Coulomb gauge \eqref{Coulomb} condition and that
	\begin{align*}
	f(0)&=0\\
	a(0)&=0\\
	f'(x)&>\tilde c_b>0\quad\text{ if $f(x)\leq1-b$}\\
	|f''(x)|+|a''(x)|&\leq \tilde C_b\quad\text{ if $f(x)\leq1-b$}\\
	|f(r)-1|&=O(e^{-r}),\quad \text{ as $r\rightarrow\infty$}\\
	|a(r)-1|&=O(e^{-r}),\quad \text{ as $r\rightarrow\infty$}.
	\end{align*}
Now suppose the conclusion does not hold, then there exists a sequence of solutions $\begin{pmatrix}u_{\e_j}\\ A_{\e_j}\end{pmatrix}$ to \eqref{eqn_AbYMH} in $B_2$ with $\e_j\rightarrow0$ and satisfying the conditions
 	\begin{align}
		\begin{split}
		|u_{\e_j}(0)|&\leq1-b\\
	\frac{E_{\e_j}(u_{\e_j},A_{\e_j})(B_2^{n+2})}{|B_2^n|}&\leq2\pi+\tau_0\\
	A_{\e_j}(\nu)|_{\partial B_2}&=0.
	\end{split}
		\end{align}
	We rescale by $\frac{1}{\e_j}$ and obtain a rescaled sequence of solutions $\begin{pmatrix}\hat u_{\e_j}(x)\\ \hat A_{\e_j}(x)\end{pmatrix}=\begin{pmatrix}u_{\e_j}(\e_jx)\\ \e_jA_{\e_j}(\e_jx)\end{pmatrix}$ to \eqref{eqn_AbYMH1} in $B_\frac{2}{\e_j}$ and satisfying
	\begin{align}
	\begin{split}
		|\hat u_{\e_j}(0)|&\leq1-b\\
	\frac{E_1(\hat u_{\e_j},\hat A_{\e_j})(B_2^{n+2})}{|B_2^n|}&\leq2\pi+\tau_0\\
	\hat A_{\e_j}(\nu)|_{\partial B_\frac{2}{\e_j}}&=0.
	\end{split}
	\end{align}

By Proposition \ref{Coulomb_Regularity} and rescaling we have uniform Schauder estimates on $\begin{pmatrix}\hat u_{\e_j}\\ \hat A_{\e_j}\end{pmatrix}$ and thus can pass to a $C^{2,\alpha}_{loc}$ limit and obtain an entire solution $\begin{pmatrix}\hat u_{\infty}\\ \hat A_{\infty}\end{pmatrix}$ on $\mathbb R^{n+2}$ satisfying the Coulomb gauge condition. By Theorem \ref{Theorem 1.9} this solution is $2$-dimensional solution with degree $\pm1$. And thus can be written as (up to rotation and gauge transformation)
	\begin{align*}
	\begin{pmatrix}\hat u_{\infty}\\ \hat A_{\infty}\end{pmatrix}=\begin{pmatrix}\bar u_0\cdot e^{i\gamma}\\ \bar A_0+d\gamma
	\end{pmatrix},
	\end{align*}
where
	\begin{align*}
	\begin{pmatrix}\bar u_0(x)\\ \bar A_0(x)\end{pmatrix}=\begin{pmatrix}\hat u_0(x_{n+1},x_{n+2})\\ \hat A_0(x_{n+1},x_{n+2})\end{pmatrix}.
	\end{align*}
By construction, we have
	\begin{align*}
	B_j=:\hat A_{\e_j}-\bar A_0
	\end{align*}
satisfying
	\begin{align*}
	&d^\star B_j=0\\
	&B_j\rightarrow d\gamma,\quad\text{ on compact subsets of $\mathbb R^{n+2}$}\\
	&\Delta\gamma=d^\star d\gamma=\lim_{j\rightarrow\infty} d^\star B_j=0,\quad\text{ on $\mathbb R^{n+2}$}\\
	&B(\nu)|_{B_\frac{2}{\e_j}}=-\bar A_0(\nu)|_{B_\frac{2}{\e_j}}=O(e^{-\frac{2}{\e_j}})\rightarrow0.
	\end{align*}
By Liouville's theorem for harmonic functions we must have $\gamma\equiv\mathrm{constant}$  so that
	\begin{align*}
	\begin{pmatrix}\hat u_{\infty}\\ \hat A_{\infty}\end{pmatrix}=\begin{pmatrix}\bar u_0\\ \bar A_0\end{pmatrix}.
	\end{align*}
Now the gradient lower bound and the curvature upper bound must hold if we choose $c_b=\frac{\tilde c_b}{2}, C_b=2\tilde C_b$, due to $C^{2,\alpha}_{loc}$ convergence.
\end{proof}

\bibliography{Cleaned_AllardAbelianYMH.bib}
\bibliographystyle{abbrv}
\end{document}